\newtheorem{thm}{Theorem}
\newtheorem{prop}{Proposition}
\newtheorem{remark}{Remark}
\newtheorem{lem}{Lemma}
\newtheorem{define}{Definition}
\newtheorem{assumption}{Assumption}
\algnewcommand\INPUT{\item[\textbf{Input:}]}
\algnewcommand\OUTPUT{\item[\textbf{Output:}]}
\begin{document}

\title{Straggler-Robust Distributed Optimization with the Parameter Server Utilizing Coded Gradient
\thanks{}
}

\author{Elie Atallah, Nazanin Rahnavard~\IEEEmembership{Senior Member,~IEEE}, and Chinwendu Enyioha
        \\
        \IEEEauthorblockA{{Department of Electrical and Computer Engineering}\\
{University of Central Florida, Orlando, FL}\\
Emails: \{elieatallah@knights., nazanin@eecs., cenyioha@\}ucf.edu}
\thanks{* This material is based upon work supported by the National Science Foundation under grants CCF-1718195.}
}

\maketitle

\begin{abstract}
Optimization in distributed networks plays a central role in almost all distributed machine learning problems. In principle, the use of distributed task allocation has reduced the computational time, allowing better response rates and higher data reliability. However, for these computational algorithms to run effectively in complex distributed systems, the algorithms ought to compensate for communication asynchrony, and network node failures and delays known as stragglers. These issues can change the effective connection topology of the network, which may vary through time, thus hindering the optimization process. In this paper, we propose a new distributed unconstrained optimization algorithm for minimizing a strongly convex function which is adaptable to a parameter server network. In particular, the network worker nodes solve their local optimization problems, allowing the computation of their local coded gradients, and send them to different server nodes. Then each server node aggregates its communicated local gradients, allowing convergence to the desired optimizer. This algorithm is robust to network worker node failures or disconnection, or delays known as stragglers. One way to overcome the straggler problem is to allow coding over the network. We further extend this coding framework to enhance the convergence of the proposed algorithm under such varying network topologies. Finally, we implement the proposed scheme in MATLAB and provide comparative results demonstrating the effectiveness of the proposed framework.

\end{abstract}

\begin{IEEEkeywords}
distributed optimization, gradient coding, synchronous, centralized networks
\end{IEEEkeywords}

\section{Introduction}
{M}{any} problems in distributed systems over the cloud, or in wireless ad hoc networks \citep{johansson2008distributed,rabbat2004distributed,ram2009distributed}, are formulated as convex optimization programs in a parallel computing scheme. Depending on the structure of these networks, i.e., centralized, decentralized or fully distributed, the optimization techniques are adapted to accommodate such structures. However, the malfunctioning of processors directly impacts the overall performance of parallel computing. Dealing with this malfunctioning is referred to as the \emph{straggling problem}. Many applications, whether over the cloud or in local distributed networks, have experienced considerable time delays, due in part to this straggling problem. Asynchronous \cite{li2014communication}, \cite{ho2013more} and synchronous algorithms \cite{zaharia2008improving}, \cite{chen2016revisiting} have been proposed to overcome this problem.  
While Lee et al. \cite{lee2016speeding} and Dutta et al. \cite{dutta2016short} describe techniques for mitigating stragglers in different applications, a recent work by Tandon et al. \cite{tandon17a} focused on codes for recovering the batch gradient of a loss function (i.e., synchronous gradient descent). 
Specifically, a coding scheme in \cite{tandon17a} was proposed, enabling a distributed division of tasks into uncoded (naive) and coded parts. This partition alleviates the effect of straggling servers in a trade-off among computational, communication complexity and time delay.
This novel coding scheme solves this problem by providing robustness to partial failure or delay of nodes.

\section{Problem Setup}\label{sec:problem}

We consider a network of $ n $ server nodes indexed by $ V=\{1,2,\ldots,n\} $ and $ m  $ worker nodes on a parameter server platform using a multi-bus multiprocessor system with shared memory. The objective is to solve a minimization problem where the solution set $ \mathcal{X}^{*} $ belongs to a convex set $ \mathcal{X} $where the gradient is bounded. To that end we require that the global function $ f $ is divided into $ p $ partitions with arbitrary number of replication for each. Thus, we require arbitrary interleaved connections according to availability. Meanwhile, in Figure \ref{Fig.1} due to the complexity of the system schematic we show a restrictive setup where each server is connected to its own partition all the time where the redundancy of each partition is unity. We show in \ref{Fig.2} a more elaborate schematic for the general case when discussing the push/pull steps for one server.

%The problem in its simple prototype is to solve $ {\bf y} = {\bf A} {\bf x} $, where $ {\bf A} \in \mathbb{R}^{M \times N} $, distributed over $ p $ partitions of $ m $ worker nodes. Each partition $ i $ consists of $ n_{i} $ workers and is required to solve the local optimization $f^{(i)}({\bf x})=\|{\bf y}_{i} - {\bf A}_{i}{\bf x}\|^{2}_{2}$, where $f({\bf x})=  \sum_{i=1}^{p}f^{(i)}({\bf x})$, and $\sum_{i=1}^{p} n_i=m$.
%
The optimization problem is the unconstrained optimization given by  
\begin{equation}\label{eq1}
\begin{split}
\hat{{\bf x}} = arg & \min_{{\bf x} \in  \mathbb{R}^{N}}  {f({\bf x})} = \sum_{i=1}^{n} f^{(i)}(x) \\
%\text{where} & \ f({\bf x})  = {\| {\bf y}- {\bf A}{\bf x} \|}^{2}_{2}, 
\end{split}
\end{equation} 
\\
%with $ {\bf A} = [{\bf A}^{T}_{1}\; {\bf A}^{T}_{2}\; \ldots \;{\bf A}^{T}_{p}]^{T} $, $ {\bf y} = [{\bf y}_{1}^{T}\;{\bf y}_{2}^{T}\;\ldots\;{\bf y}_{p}^{T}]^{T} $, $ {\bf A}_{i} \in \mathbb{R}^{m_{i} \times N} $, $ {\bf y}_{i} \in \mathbb{R}^{m_{i} \times 1} $  and $ \sum_{i=1}^{p} m_{i} = M. $
%The optimization Problem \eqref{eq1} is convex and equivalent to the problem of solving a least squares problem of a system of linear equations over the network.

\begin{figure}
%\hspace*{0cm}
%\includegraphics[width=9cm,height=8cm,angle=0]{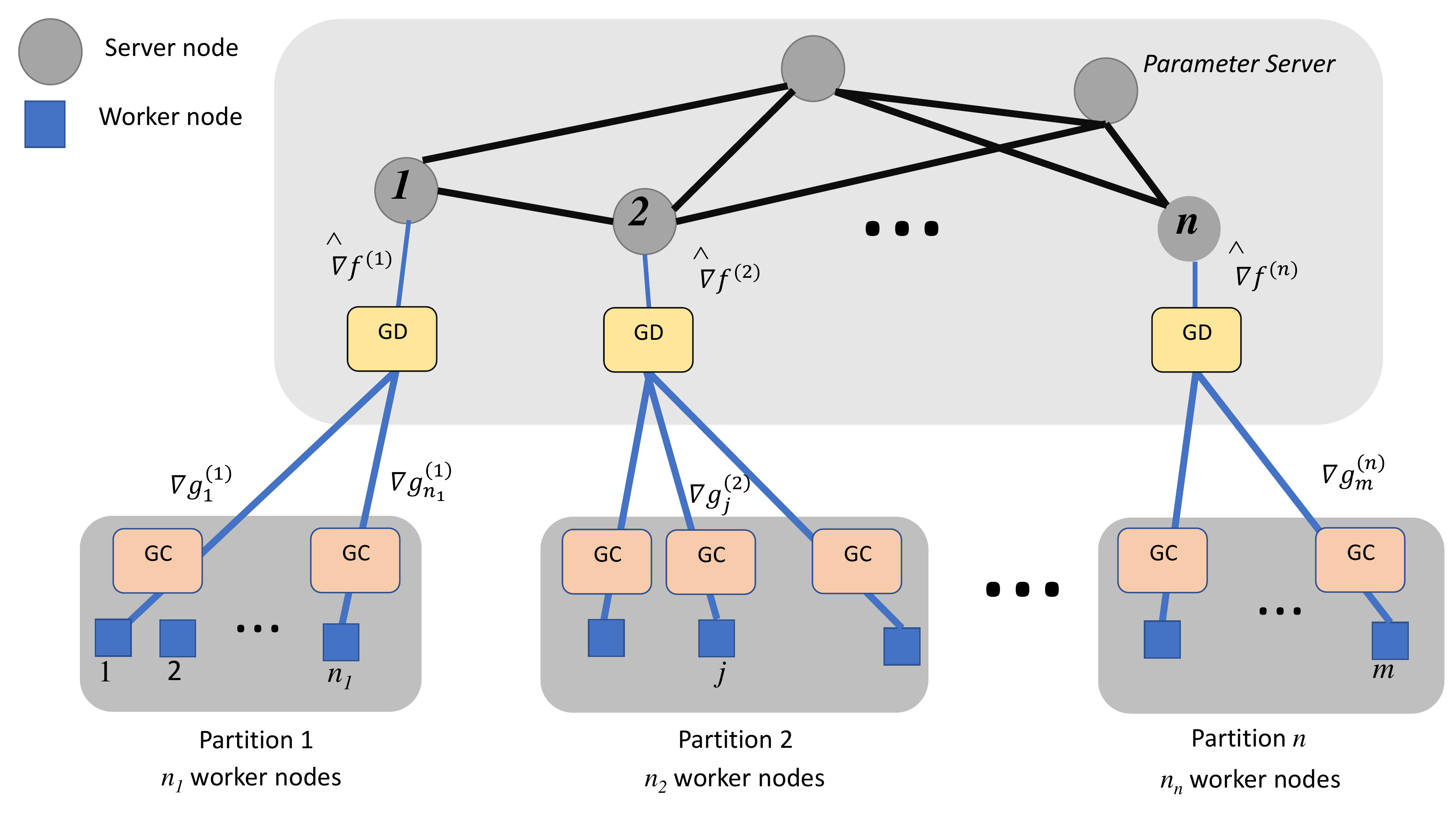}
%\includegraphics[width=\textwidth]{PS1.pdf}
\includegraphics[scale=0.5,bb=0 0 500 500]{PS1.pdf}
\label{Parameter Server}
\caption{Parameter server network in the special case with $ n $ server nodes and $ m $ workers nodes. The worker nodes are  divided into $ p = n  $ partitions where partition $ i $ has $ n_{i} $ workers and each server is connected to its unique partition at all iterations.}
\label{Fig.1}
\end{figure}

%\subsection{Background on Stochastic Gradient Descent}

Due to the random behavior of this distributed system we adapt a similar approach to analyze the unconstrained Problem (\ref{eq1})
%descent method as was done in \citep{johansson2008distributed,rabbat2004distributed,ram2010distributed,lee2013distributed}.
as in \citep{lee2013distributed}, Lee and Nedi\'c. In their approach the authors solve a constrained distributed optimization problem based on stochastic gradient descent (i.e., partial gradients) and random projections
\vspace{-0.3cm}
\begin{equation}
\begin{split}
\min_{{\bf x} \in \underset{i}{\cap} \mathcal{X}_{i}}f({\bf x}) = \sum_{i=1}^{n} f^{(i)}(x)      
\end{split}
\end{equation}
on $ n $ nodes, where the local optimization problems are carried on the nodes themselves and $ \mathcal{X}_{i} $ are convex sets such that $ \underset{i}{\cap} \mathcal{X}_{i} $ is the constraint set.
We note that we use a similar approach as their random approach which employs the supermartingale theorem due to the structure of our distributed parameter servers and the randomness of connections along the multi-bus between servers and workers. This analysis also utilizes the supermartingale theorem, however the projection is on the set $ \mathcal{X} $ of bounded gradients which has nothing in common with any projection step in the algorithm and is their only for the mathematical analysis.

\begin{comment}
The update equations for this method are:
\begin{subequations}\label{lee.eq}
\begin{equation}
\begin{split}
   {\bf v}_{i}(k)=\sum_{j \in \mathcal{N}_{i}(k)} w_{ij}(k){\bf x}_{j}(k),
\end{split}    
\end{equation}
\begin{equation}
\begin{split}
    {\bf x}_{i}(k+1)= \Pi_{\mathcal{X}_{i}(k)^{\Omega_{i}(k)}}({\bf v}_{i}(k)-\alpha_{k}\nabla{f}^{(i)}({\bf v}_{i}(k))),
\end{split}    
\end{equation}
\end{subequations}
where $ {\bf x}_{i}(k) $ is the estimate of the solution at node $i$ at iteration $k$ for $i=1,2,\ldots,n$, and $ {\bf v}_{i}(k) $ is the weighted average calculated from the connection of node $i$ to neighboring nodes; that is, the neighborhood $\mathcal{N}_{i} $ through using the weights $ w_{ij}(k) $ such that $ \sum_{j \in \mathcal{N}_{i}(k) } w_{ij}(k) = 1 $. $ \Pi_{\mathcal{X}_{i}(k)^{\Omega_{i}(k)}} $ is a random projection on a convex feasible set of the constraints and $ \nabla{f}_{i} $ is the gradient of the function $ f^{(i)} $ where $ f= \sum_{i=1}^{n} f^{(i)} $. 
\end{comment}

%*****************
%\subsection {Protocol to solve our Problem}

In this work to solve the unconstrained distributed optimization problem %(\hl{or Problem} 
\eqref{eq1}
% in contrast to the approach taken in \citep{lee2013distributed}
we use a gradient descent method without projections through utilizing \emph{Straggler-Robust Distributed Optimization} (SRDO) Algorithm.
As its name infers our algorithm has the extra feature of being robust to stragglers. To this end, coded local gradients $ \nabla{g}_{j}^{(i)} $ (i.e., local coded gradient of worker $ j $ in partition $ i $) are carried on the different partitions of worker nodes and decoded to the partition's gradient $ \nabla{f}^{(i)} $ (i.e., the gradient of the function of the partition connected to server $ i $, cf. Remark 1) on the connected server nodes (cf. Fig.1).

%Thus, we use a modified approach in \eqref{eq2} similar to stochastic gradient descent where stochastic weighted averaging is used to compensate for inexactness of the gradients and lack of the projection step. 
%is compensated for by the stochastic weighted averaging and the gradient descent equation (\ref{lee.eq}b) lacks the projection step. 

Meanwhile, after dividing the load into different partitions each partition $ i \in \{1,\ldots,p\} $ is distributed with an arbitrary redundancy among the workers. Thus, each partition replica utilizes a gradient coding similar to that of \citep{tandon17a} to enable robustness to an allowed number of stragglers per each partition. 
More specifically, under a global clock each server sends its estimate $ {\bf v}_{i}(k) $ to possibly any worker under the interleaved shared memory multi-bus system.
Each worker then calculates its coded gradient relative to the partition replica it belongs to and using the estimate it received from a server. Then under a global clock, the servers synchronously call for different partitions and decode their local functions gradients $ \nabla{f}^{(i)} $ to compute their estimates $ {\bf x}_{i}(k) $.

Thus, different servers are working in a synchronous manner to compute their solution $ {\bf x}_{i}(k) $ through a gradient descent step decoded from partial coded gradients of the connected partition replica. However, each partition worker calculates its coded gradient by evaluating the coded load on the weighted averages $ {\bf v}_{q}(k-k^{'}_{q}) $ which can be from different servers and different time step with bounded delay, (i.e., $ q \in \{1,\dots,n\} $ and $ 0 \leq  k^{'}_{q}  \leq H $.
%that can be of different servers and of different time instants. 

%The advantages of the proposed method are that it uses a relaxed notion of stochasticity to compensate for inexactness of the distributed gradient in the iterative algorithm. \hl{This sentence isn't clear - It is to the server-connection step that allows this inexact gradient to direct the algorithm into converging to the solution.} 
%The reason for the above is that this step utilizes the stochastic approach of stochastic gradient descent to compensate for the inexactness of the distributed gradient.

\begin{algorithm}
\begin{algorithmic}
\State{\textbf{Given: $ f({\bf x})$} = $ \sum_{i=1}^{p}f^{(i)}(x)$}\\
\State {e.g. Least Squares Prototype: ${\bf A} \in \mathbb{R}^{M \times N}$, $ {\bf y} \in \mathbb{R}^{M} $ where $ f({\bf x})= \frac{1}{2}\|{\bf y} - {\bf A}{\bf x} \|^{2}_{2}$.}
\State{\textbf{Find:} $ {\bf x}^{*} \in \mathbb{R}^{N} $. }
\State \textbf{Initialization:} Each server $ i $ sets $ v_{i}(0) $ to an arbitrary random vector. \;
\State \textbf{Set:} $ tol $, $ \epsilon_{i}= \emph{large number} $. \\
\textbf{while $\epsilon_{i} > tol $} \\ {
\textbf{At each server $ i $:} \\
\textbf{Push step of iteration $ k $:} \& \\
\text{Send $ v_{i}(k) $ to \& arbitrary workers} \\
\begin{subequations}\label{eq2}
\begin{equation}
\begin{split}
\textbf{Pull step at iteration $ k $:} \ \ \ \ \ & \\
\textit{Decoding the partition gradient} \ \widehat{\nabla{{f}}}^{(i)}(k) \\
\textit{from the sent local coded gradients} \ \nabla{g^{(i)}_{j}(k)}
\end{split}
\end{equation}
\begin{equation}
\begin{split}
\textbf{Iteration $ k+ 1 $:} & \\
{\bf x}_{i}(k+1)= & {\bf v}_{i}(k)-\alpha_{k}\widehat{\nabla{{f}}}^{(i)}(k)
\end{split}
\end{equation}
\begin{equation}
\begin{split}
{\bf v}_{i}(k+1)= & \sum_{j=1}^{n}w_{ij}(k+1){\bf x}_{j}(k+1).
\end{split}
\end{equation}
\end{subequations}
$ \epsilon_{i}= \| {\bf v}_{i}(k+1)-  {\bf v}_{i}(k) \|_{2} $} \\
\textbf{end}
\OUTPUT { \  \ \ $ {\bf x}^{*} = {\bf x}_{i}(k) $.}
\caption{Algorithm Updating at Each Server SRDO Algorithm}
\end{algorithmic}
\end{algorithm}

Next, we briefly outline the steps of our algorithm implemented at the server and worker nodes; and elaborate on these steps in Section \ref{sec:algorithm}.
After the distribution of the load in the distribution step accordingly. 

On the servers side: \\
{\bf Push step:} Under a global clock each server $ i $ sends a message containing the weighted average $ {\bf v}_{i}(k) $ to an arbitrary number of workers (i.e., that could be in different partitions). Then each worker starts computing its local coded gradient in the worker computation of coded gradient step.

%{\bf pull step:} When each worker in a partition get at least one $ v_{i} $, the partition enters the pull step. 

{\bf Pull step:} Under a global clock each server gets activated and calls for coded gradients from an arbitrary partition. The workers from that partition $ \iota $ will send the coded gradients $ \nabla{g}_{j}^{(\iota)} $ to the connected server $ i $. (i.e., some workers are stragglers thus don't send their coded gradients). Server $ i $ after receiving the coded gradients from the connected partition $ \iota $ decodes the partition gradient $ \nabla{f}^{(i)} $.
\begin{remark}
Here, we identify the partition $ \iota $ at this connection instant with server $ i $.
\end{remark}
Then the server calculates the estimate $ {\bf x}_{i}(k+1) $.

{\bf Consensus step:} Under a global clock each server gets activated again and computes its weighted average $ {\bf v}_{i}(k+1) $ from its connected servers $ x_{j}(k+1)$ according to $ (\ref{eq2}c)$. 
Then the algorithm state goes back to the push step and continues henceforth until convergence. 

On the workers side: \\
{\bf Worker Computation of Coded Gradient:} When a worker $ i $ of a partition $ \iota $ receives a weighted average $ {\bf v}_{j}(k) $ from a server $ j $ it gets activated and starts calculating its coded gradient $ \nabla{g}_{i}^{(\iota)} $ relative to the coding scheme used on partition $ \iota $. Workers can get delayed in their computation of coded gradients and need to send their computed gradients to a connected server to their partition at a subsequent time instant. Here, there is one aspect of asynchronous behavior in the algorithm that influence the computed partition gradient used at the connected server. That is, we don't require that the coded gradients of which the partition's (or connected server) partial gradient is decoded to be of the consecutive previous instant weighted averages evaluations but rather of possibly older weighted averages evaluations.

%*****************
%We investigate scenarios of varying network topologies, where some nodes are vulnerable to failures or delays, referred to as stragglers. Similar to the work of \cite{tandon17a}, we apply a coding scheme to calculate the overall gradient of the main system from batches of the local solutions of the subsystems distributed over the nodes. 
%We use the gradient coding approach \cite{tandon17a} to form the gradient of each worker nodes partition. 
This approach is tolerant to the allowed number of stragglers, and it is also robust to more than the allowed number of stragglers.

%Without loss of generality, and for the ease of notation and tracking of analysis we assume $ p=n $ (cf. Fig.1), although the proof can be easily extended with suitable care in notation to the general case of any value of $ p $.
We prove convergence in the general case where $ n $, $ m $ and $ p $ are arbitrary.
%In the general case, at each iteration each partition of worker nodes sends to one server in the pull step, while each sever gets connected to an arbitrary number of worker nodes in the push step. While for the special case we can assume that each partition of workers is connected to one server at all iteration steps. Thus, we can identify the partition index $ i $ to be that of the server at all time.
\subsection{Assumptions on the Convex Functions}

\begin{assumption}\label{A3.1}
Let the following conditions be satisfied:

\begin{itemize}
\item[\textbf{(a)}]{ Each function  $\mathit{f^{(i)}} : \mathbb{R} ^{N} \rightarrow \mathbb{R}  $  is convex. }
%Remark: In our network optimization problem \eqref{eq1}, the local function  $f^{(i)}({\bf x})=\|{\bf y}_{i}-{\bf A}_{i}{\bf x}\|^{2}$ minimization is equivalent to the solution of the overdetermined subsystem of linear equations ${\bf y}_{i}={\bf A}_{i}{\bf x}$.} 
\item[\textbf{(b)}]{ The functions $\mathit{f^{(i)}}$, $i \in {1,2,\dots,n}$, are differentiable and have Lipschitz gradients with a constant $ L $ over $\mathbb{R}^{N}$,
\begin{equation*}
\begin{split}
\|\nabla{f^{(i)}}({\bf x})-\nabla{f^{(i)}}({\bf y})\| \leq L\|{\bf x}- {\bf y}\|
\end{split}
\end{equation*}
for all ${\bf x}$, ${\bf y} \in \mathbb{R}^{N}$. }
\item[\textbf{(c)}]{ The gradients $\nabla{f^{(i)}}({\bf x})$, where $i \in V $ are bounded over the set $\mathcal{X} $ where $ \mathcal{X}^{*} =\{ {\bf x} | {\bf x} = \arg \min f({\bf x}) \} \subset \mathcal{X} $; i.e., there exists a constant $G_{f}$ such that $\| \nabla{f^{(i)}({\bf x})} \| \leq G_{f} $ for all $ {\bf x} \in \mathcal{X}$ and all $i \in V $.}
%\item[\textbf{(d)}]{ Each function $ f^{(i)} = \sum_{l=1}^{n_{i}}f^{(i)}_{l} $, where we slightly abuse the notation and use the power $ l $ to index the functions in the summand. And $\| \nabla{f^{(i)}_{l}({\bf x})} \| \leq D_{i} $.}
%\item[\textbf{(e)}]$ {\bf X}^{*} \in \mathcal{X}=\{ {\bf x} | {\bf x} = \arg \min f({\bf x}) \} $ is:
%\begin{itemize}
   % \item (i) a set 
    %\item (i) a unique point.
%\end{itemize}
%That is, $ f (x) $ is strongly convex.
%\end{itemize}
%\end{assumption}

\end{itemize}
\end{assumption}

The assumed structure on $f$ is typical for problems of this kind and enables a detailed convergence analysis.
%\subsection{Assumptions about the Network Topology and the Weight Matrices}

\begin{comment}
\begin{assumption}\label{A3.2}
\textbf{Network Connectivity Conditions}: 
\begin{itemize}
\item[\textbf{(a) }]{{\bf Tolerance for Weak Stragglers} There exists a scalar $T_{i}$ such that for every time instant $ k $, there exists a $\Gamma ^{\complement}_{i}(l)$ corresponding to the stragglers for the block of sequence $\{\Gamma_{i}(k+i)\}_{i=0}^{T}$ at the instant $l=i+k$ for server node $ i $, such that $|\Gamma ^{\complement}_{i}(l)| \leq s$, where $s$ is the number of stragglers allowed for correct decoding of the gradient of the workers partition identified with server $ i $.}
\item[\textbf{ (b) } ]{{\bf Tolerance for Strong Stragglers } If $T_{i}=0$ in (a) , i.e., the network always has a number of stragglers less than or equal to the allowed number at each instant connected to server node $ i $ at each pull step.}
\end{itemize}
\end{assumption}
We can also have no allowed number of stragglers connection per partition at all time, that is $ T_{i} = \infty $.

%That is, there exists a scalar $ Q $ such that the graph $ (V, \cap_{k=0,..,Q-1}E(k+l)) $ is strongly connected for all $ k \geq 0 $. i.e., for (a) $ Q = T_{i} $ and for (b) $ Q = 1 $.
%This description is valid for the special case where $ p = n  $ and each server is connected to the same workers. 

\end{comment}

Next, we make the following assumptions about the server-server edge weights in the consensus step.

\begin{assumption}\label{A3.3}[\textbf{Row Stochastic}]
For all $k \geq 0$, we have: 

The matrices $ w_{ij}(k) $ in (\ref{eq2}c) are chosen such that $ w_{ij} = {\bf W}(k) $ depending on the network server-connection topology in a way that allows consensus.\\
$ \textbf{ (b) }  $ $\sum_{j=1}^{n}[{\bf W}(k)]_{ij}=1$ for all $i \in V$. \\
$ \textbf{ (c) } $ There exists a scalar $\nu \in (0,1)$ such $[{\bf W}(k)]_{ij} \geq \nu$ if $[{\bf W}(k)]_{ij} > 0$. \\
$\textbf{ (d) } $ $\sum_{i=1}^{n}[{\bf W}(k)]_{j} \leq 1 - \mu $ for all $j \in V$ and $ 0 < \mu < 1 $. \\
$\textbf{ (e) } $ If server $ i $ is disconnected from server $ j $ at instant $ k $, then $[{\bf W}(k)]_{ij} = 0$.
\end{assumption}

\begin{remark}
Notice that for the matrices $ {\bf W}(k) $ we have for $ b_{i} = \sum_{j=1}^{n}[{\bf W}(k)]_{ij} a_{j} $ that
\begin{equation}
\begin{split}
\sum_{i=1}^{n} \| b_{i} \| \leq \sum_{i=1}^{n} \sum_{j=1}^{n}[{\bf W}(k)]_{ij} \| a_{j} \| \leq ( 1-\mu ) \sum_{j=1}^{n} \| a_{j} \| \leq \sum_{j=1}^{n} \| a_{j} \|
\end{split}
\end{equation}
and we are going to use either inequality as needed in our analysis.
\end{remark}

\begin{assumption} Bounded Delayed Evaluation and Gradient Computation:

We assume the decoding of gradient $ \nabla{f}^{(i)}(k) $ at time $ k $ decoded from coded gradients evaluated of weighted averages $ {\bf v}_{i} (k - k^{'}) $ where $  0 \leq k^{'} \leq H $.
We assume the use of stale gradients in gradient computation scenario 3 of weighted average evaluations of instants $ k - k ^{''} $ where $  0 \leq k^{''} \leq \kappa $ to be more explicit, i.e., that are of the global iteration $ k - \bar{k} $ where $ \bar{k} \leq k^{''} \leq \bar{k} + H $. Without a loss of generality, we assume $ H = \kappa $.
\end{assumption}

\begin{assumption} Choice of Partition by Server in Pull Step

Each server $ i $ gets connected to a partition $ (i) $ out of the $ p $ partitions with a probability $ \gamma_{(i)} $ and gets no connection with any partiyion with a probability $ \gamma_{(0)} $.
\end{assumption}

\begin{assumption} Diminishing Coordinated Synchronized Stepsizes

The stepsizes $ \alpha_{i,k} $ of server $ i $ are coordinated and synchronized where $ \alpha_{i,k}=\alpha_{k} > 0 $ and , $  \alpha_{k} \rightarrow 0 $. $ \sum_{k=0}^{\infty} \alpha_{k} = \infty $, $ \sum_{k=0}^{\infty} \alpha_{k}^{2} < \infty $. For example, a unanimous stepsize $ \alpha_{i,k} = \alpha_{k} = \frac{1}{k+1} $ among all servers $ i $ per iteration $ k $.
\end{assumption}

\begin{comment}
\begin{remark}
Although at each aggregation instant (partition gradient decoding step) (i.e., iteration $ k $ corresponding to pull step) not all nodes of a partition are connected (i.e., only nodes $j \in \Gamma_{i}(k)$ are connected), we still need to use weighted averages $ {\bf v}_{i}(k) $ for all server nodes $ \{ 1,\ldots,n \} $. Because apriori and afterward, due to the parameter server's distributed server-connection topology, we need these weights, $ w_{ij} $ in (\ref{eq2}c) to calculate the weighted averages from ${\bf x}_{i}(k)$ and ${\bf x}_{i}(k+1)$ at iterations $k-1$ and $k$. This computation of ${\bf v}_{i}(k)$ in its stochastic form is essential for the operation of stochastic gradient descent methods. Hence, the weighted averages are exploited to be sent from server nodes to the worker nodes.
\end{remark}

This push step in which weighted averages $ v_{i}(k) $ are sent to workers would have already utilized each ${\bf x}_{i}(k)$ that are found by minimizing over the already determined inexact partitions' gradients for all $i \in \{1,2,\dots,n\}$, i.e., add incrementally $ \nabla{f}^{(i)} $ where $ f= \sum_{i=1}^{n}f^{(i)} $ in the consensus step. However, the pull step where each worker node sends its local coded gradient to its corresponding connected server node at that iteration does not employ any weighting criteria, as the aim there is just to find the partition full gradient $ \nabla{f}^{(i)} $ (i.e., its decoded variant).
\end{comment}

\section{Main Algorithm: Stragglers Robust Distributed Optimization Algorithm (SRDO)}\label{sec:algorithm}

In solving problem \eqref{eq1}, we propose a synchronous iterative gradient descent method. This method is robust to an allowed number of stragglers, and is also valid for a varying network topology with more than the allowed number of stragglers as we are going to show. \\
An appropriate implementable platform for the algorithm is a multi-bus distributed parameter server shared memory network.
The network is equipped with a universal clock that synchronizes the actions of its nodes.

$ \textbf{Initialization:} $ Each server node $ i $ at global iteration $ k = 0 $ begins with random weighted average $ {\bf v}_{i}(0) \in \mathbb{R}^{N} $ for $ i \in \{1,2,\dots,n\} $, and sends ${\bf v}_{i}(0)$ to arbitrary number of worker nodes in the \underline{push step}, as shown in Fig. 2. Each worker node $ j $ of a partition $ u $ then solves for its partition gradient in a coded manner using its received ${\bf v}_{i}(0)$, finding a coded local gradient $(\nabla{g_{j}}^{(u)})^{T}={\bf B_{j}}^{(u)} \overline{\nabla{{f}}^{(u)}} $ evaluated of $ v_{i}(0) $ where $ f^{(u)} $ corresponds to the partition $ u $ function such that $ f^{(u)} = \sum_{l=1}^{n_{u}}f^{(u)}_{l} $ 
and  
$ \overline{\nabla{{f}}^{(u)}}=[(\nabla{f}^{(u)}_{1})^{T} \; (\nabla{f}^{(u)}_{2})^{T} \; \ldots \; (\nabla{f}^{(u)}_{n_{u}})^{T}]^{T} $. \\
Then in the pull step of iteration $ k $, each server node $ i $ according to the global clock gets activated and aggregates those local coded gradients received from its partition $ \iota $ worker nodes and finds the partition gradient $ \nabla{f^{(\iota)}} $ (i.e., notice that we identified partition $ \iota $ with server $ i $) in the \underline{pull step}, as shown in Fig. 2, (i.e., decoding the partition's gradient in the respective network topology scenario, i.e., related to the number of stragglers, and according to the used gradient computation scenario).
Afterward, the algorithm adapts the values ${\bf v}_{i}(0)$ at the server nodes by that connected partition computed gradient if it exists, and finds the estimates ${\bf x}_{i}(1)$ according (\ref{eq2}b). It then forms the weighted averages ${\bf v}_{i}(1)$ according to (\ref{eq2}c), and sends them  to the worker nodes, as the cycle continues henceforth until the algorithm converges to the optimizer. We prove the convergence in Section VII.

\begin{remark}\label{remark2}
Each worker node $ j $ of partition $ \iota $ uses the estimate ${\bf v}_{i}(k)$ and finds the coded local gradient $\nabla{g}_{j}^{(\iota)}(k)({\bf v}_{i}(k))$ corresponding to its local optimization at ${\bf v}_{i}(k)$; i.e.,  here, $g_{j}^{(\iota)}(k)$ is a function employed due to the coding scheme, and $\nabla{g_{j}^{(\iota)}(k)}$ corresponds to $ g_{j}^{(\iota)}(k)
=\sum_{q=1}^{n_{\iota}}{\bf B}^{(\iota)}_{j,q}f^{(\iota)}_{q}(k) $. The function $f^{(\iota)}_{q}$ corresponds to $ q \in \{1,2,\dots,n_{\iota}\}$, where $ \sum_{q=1}^{n_{\iota}}f^{(\iota)}_{q}=f^{(\iota)} $, and $ n_{\iota} $ is the number of worker nodes in partition $ \iota $.
In the pull step, each server node uses the received coded local gradients that are employed at probably different estimates ${\bf v}_{i}(k-k')$ of different time instants $ k' \leq \kappa $, and tries to decode the partition gradient of the function $f^{(\iota)}$ by using a different coded scheme according to a specific partition sub-partition, (i.e., which need not to be unanimous to all partitions). 
%Hence, if all instants ${\bf v}_{i}(k)$ received in the allowed number of stragglers case are equal (i.e., only worker nodes that utilized a connection to one server in the push step tend to connect at the consecutive pull step to one server node), $\nabla{\jmath{f}^{(\iota)}}(k)$, which is formed as a combination of the constituent encoded gradients but at different points ${\bf v}_{i}(k)$, will be an evaluation at the same point ${\bf v}(k)={\bf v}_{i}(k)$ for all $i \in \{1,2,\dots,n_{\iota}\}$. Thus, the partition gradient will be equal exactly to the partition gradient at that point $\nabla{f^{(\iota)}({\bf v}(k))}$ (hence, the notation $\widehat{\nabla}$ for inexactness).
%The nature of the distributed network is affected by its time-varying topology which according to specified failures or delays allows the connection of a limited number of nodes of a partition to the respective server node at the beginning of the aggregation step (i.e., the pull step).
We define the set of connected nodes at iteration $k$ to server node $ i $ as $\Gamma_{i}(k)$, and thus, the set of stragglers to node $ i $ as $\Gamma^{\complement}_{i}(k) \triangleq \{ 1,2,\dots,n_{i} \} \setminus \Gamma_{i}(k)$. 
%In (\ref{eq2}c), the weighted average ${\bf v}_{i}(k+1)$ is a weighted average evaluated at server node $ i $. In particular, $w_{ij}(k+1) \geq 0 $ is a weight that the server node places on the estimate ${\bf x}_{j}(k+1)$ exploited from the minimization using the partition gradient decoded from all agents in step (\ref{eq2}b) at time $k+1$, where the total weight sum is $ 1 $; i.e., $\sum_{i=1}^{n}w_{ij}(k+1)=1$ for each weighted average ${\bf v}_{i}(k+1)$, which depends not only on the connection topology at the server nodes but also on the state of computation. % $^{\dagger}$.
\end{remark}

\begin{remark}
It is worth noting the following about the synchronous behavior of SRDO:
\begin{itemize}
\item If it happens that a server node $ j $ still didn't receive the coded gradients of any of the partitions and is unable to decode the partition's partial gradient and calculate its estimate $ {\bf x}_{j}(k+1) $ then $ {\bf x}_{j}(k+1)={\bf v}_{j} $.
\item If at the push step of iteration $ k $ a worker is unable to receive any of the weighted averages $ {\bf v}_{q}(k-k^{'}_{q}) $ for all $ 0 \leq k^{'}_{q} \leq H $, where $ q \in \{1,\ldots,n\} $, that allows it to compute its local coded gradient in time  before the pull step for the same instant $ k $, then that worker is considered a straggler. 
%Meanwhile, as it will be explicitly described in the next subsection, the worker nodes lack full asynchronous behavior and 
\item Each worker has to finish its computation before interacting with another server.
%This however, is compensated for by the use of prior weighted averages in the process of the algorithm, as we have mentioned above, or delayed and stale gradients of previous worker computations for two other gradient computation scenarios. 
In that respect, gradients evaluated at previous weighted averages can still be used by a server as long as the worker would send its coded gradient when it finishes computation at the time of the synchronous update. Moreover, a similar scenario is when these coded gradients from prior are kept in memory of a server and are used in the subsequent updates. The benefit of the latter scenario on the prior one is that it mitigates the effect of stragglers at the instant of update.
\end{itemize}
\end{remark}

\subsection{Remark on the Computation of the Gradient under Different Scenarios}
We distinguish three different scenarios for different partition's gradient computation scenarios:
\\

$ \textbf{Scheme \ 1:} $ \\In this scheme, the number of all partition's worker nodes disconnected to their respective server node $ i $ at the pull step is less than or equal to the maximum allowed number of stragglers (i.e.. $ | \Gamma_{i}^{\complement} | \leq s $).. And the server decodes the partition's inexact gradient $ \widehat{\nabla{f}^{(i)}} $ by the brute application of the described coding scheme. 

$ \textbf{Scheme \ 2:} $ \\
In this scheme, the number of all partition's worker nodes disconnected (i.e., fail or get delayed) from their respective server node $ i $ at the pull step is greater than the maximum allowed number of stragglers, 
%and the server witnesses a delay in the information sent by the worker nodes of its corresponding partition or a computation failure of calculating the neede coded gradients on time 
(i.e., $ | \Gamma_{i}^{\complement} | > s $).
Server node $ i $ uses only the received coded local gradients from its connection set $ \Gamma_{i}(k) $ to compute the partition's inexact gradient $  \widehat{\nabla{f}^{(i)}}  $.

$ \textbf{Scheme  \ 3:} $ \\
In this scheme, the number of all worker nodes disconnected (i.e., fail or get delayed) from their respective server node $ i $ at the pull step is greater than the maximum allowed number of stragglers, % the server witnesses a delay in the information sent by the worker nodes of its corresponding partition 
(i.e., $ | \Gamma_{i}^{\complement} | > s $).
Server node $ i $ uses the received local gradients at instant $ k $ from  its connection set $ \Gamma_{i}(k) $ of the connected worker nodes of partition $ \iota $ identified with server $ i $ 
%through the connection 
or stale delayed coded local gradients to compute the partition's inexact gradient  $ \widehat{\nabla{f}^{(i)}} $,  (see Remark 3 for different scenarios of this scheme). \\

%This can be assumed in two schemes. In one, the delayed local gradients are already saved from previous instants at server node $ i $ from corresponding worker node $ j $. This scheme is more robust, especially if the straggler problem is due to the complete disconnection in the communication path or complete failure of worker node $ j $. In the other scheme, worker node $ j $ has a computational delay when the receive queue of server node $ i $ calls for its local computation, and it sends a delayed local coded gradient instead, due to its synchronous computational behavior. This is less robust, as it needs the full connection between server node $ i $ and worker node $ j $, and the lack of failure of worker node $ j $.

Hence, in SRDO step (\ref{eq2}b), the iterate ${\bf x}_{i}(k+1)$ employing the partition's decoded gradient is calculated by the server node. i.e.,
{\small \begin{equation}\label{eq5}
\begin{split}
{\bf x}_{i}(k+1)={\bf v}_{i}(k)-\alpha_{k}\widehat{\nabla{f}^{(i)}}(k)
\end{split}
\end{equation}}
where $\widehat{\nabla{f}^{(i)}}(k)=\sum_{j \in \Gamma(k) }{\bf A}^{(i)}_{fit,j}{\nabla{g}^{(i)}_{j}}({\bf v}_{q}(k-k_{q}^{'}))$ where $ 0 \leq k_{q}^{'} \leq H  $ and $ q \in V=\{1,\dots,n\} $.

The server node aggregates the weighted local coded gradients at different estimates ${\bf v}_{q}(k)$ available on each connected partition's worker node, to decode the gradient of the partition. \\

This decoded partition's gradient utilized in the updating step of the partition's connected server in its more general form compatible with the aforementioned three schemes can be further written in a more reduced form corresponding to each index $i \in \{1,\ldots,n\} $ as:
{ \begin{equation}\label{eq5.1}
\begin{split}
\widehat{\nabla{f}^{(i)}} & (k)=  \nabla{f^{(i)}({\bf v}_{i}(k))} \\
& + \sum_{j \in \Gamma_{i}(k)}{\bf A}^{(i)}_{fit,j}\sum_{l=1}^{n_{i}}{\bf B}^{(i)}_{j,l}(\nabla{f}^{(i)}_{l}({\bf v}_{q}(k-k_{q}^{'}))-\nabla{f}^{(i)}_{l}({\bf v}_{i}(k))).
\end{split}
\end{equation}}

Where here the server node $ i $ receives coded gradients in the pull step from a worker node $ l $ which previously received an arbitrary estimate $ {\bf v}_{q} $ in the prior consecutive push step. Here, we seek arbitrary server to worker connections in both the pull and push steps. That is, servers are not only connected to the same workers partition in both the pull and the push steps as then in that case an exact but different partition's gradients are decoded on different servers.

\section{Background Material}

\label{sec:review}

\subsection{Gradient Coding Scheme}

As previously discussed, when solving Problem \eqref{eq1} using a distributed synchronous gradient descent worker nodes may be stragglers \cite{li2014communication,ho2013more,dean2012large}; i.e., the nodes fail or get delayed significantly in computing or communicating the gradient vector to the server node. Tandon et al. \cite{tandon17a} proposed replicating some data across machines in a defined coding scheme to address this problem. This scheme allows the recovery of the overall gradient by the aggregation of the  computed local gradients from the connected nodes active in the network at iteration $k$.

% In particular, the overall function is distributed over all nodes in a way such that every subset $\Delta(k) \subset \{1,2,\dots,n\} $, where $| \Delta(k)| \geq n-s $, and $s$ is the maximum number of allowed stragglers, contains all subsets $J_{i}$ for all $i \in \{1,2,\dots,n\} $ that determine the partition gradients of the overall gradient (i.e., partitions of $ \cup_{i=1}^{m}J_{i} $ are in the connected worker nodes set  $ \Delta(k) $, that is the overall function of the system $y=Ax$ can be determined from the set $\Delta(k)$). The proposed coding scheme can be used without any feedback and coordination to identify the straggler nodes. 

%However, this approach can still decode the overall gradient of the whole system, by aggregating only the local gradients from the connected nodes (if an allowed number of stragglers occurs at the iteration $k$ aggregation step). Every node can send a single linear combination of gradient vectors without identifying the stragglers. The coding theoretic question is to design a code such that any $n-s$ linear combination of these unit linear combinations contains the overall gradient of the function $f=\sum_{i=1}^{m}f^{(i)}$.
Specifically, in \cite{tandon17a}, the authors find a lower bound on the structure of the coding partition scheme that allows the computation of the overall gradient in the presence of any $s$ or fewer stragglers, (i.e., if we have fewer stragglers than the maximum allowed, we can use any $n-s$ combination of the connected nodes). This bound is on the minimum number of replicas for each partition $J_{i}$ forming the local function $f^{(i)}$ such that $f=\sum_{i=1}^{d}f^{(i)}$ which should be at least replicated $s+1$ times across all machines.
A coding scheme robust to any $s$ stragglers or less, corresponding to $n$ nodes and $m$ data partitions.
Without loss of generality, we assume the number of nodes $ n $ is equal to the number of partitions $ m $ in the algorithm.

Therefore, to employ coding and decoding of the overall gradient in the case of the allowed number of stragglers not to exceed $s$, we require a scheme in which \cite{tandon17a}: 
\begin{equation}
{\bf B} \in \mathbb{R}^{n \times d}, \ \  {\bf A} \in \mathbb{R}^{{{n}\choose{s}} \times n}  \ \ \text{and}  \quad {\bf A}{\bf B}=\boldmath{1}_{{{n}\choose{s}} \times d},
\end{equation}
where the decoding matrix ${\bf A}$ and the encoding matrix ${\bf B}$ can be calculated from Algorithm 1 and  Algorithm 2 in \cite{tandon17a}, respectively.

%In our method and after partitioning the worker nodes into $ p $ fixed partitions, (i.e., as mentioned for ease of tracking of notation and analysis, we assume without any loss of generality that $ p = n $, the number of servers, and thus, we identify each partition with a server $ i $). 
We exploit the above coded scheme to compute $ \nabla{f}^{(i)} $, where $ \nabla{f}= \sum_{i=1}^{n}\nabla{f}^{(i)} $. Thus, we apply this coding scheme to compute $ \nabla{f}^{(i)} = \sum_{l=1}^{n_{i}}f^{(i)}_{l} $ (i.e., in the coding scheme, the number of data partitions $ d $ is without loss of generality equal to the number of nodes $ n $; therefore, in our partitions $ d = n_{i} $, the number of nodes of partition $ i $). See Remark~\ref{remark2} in Section~\ref{sec:algorithm}.

\begin{figure}
\hspace{-1cm}
\includegraphics[bb=0 0 800 800, scale=0.5]{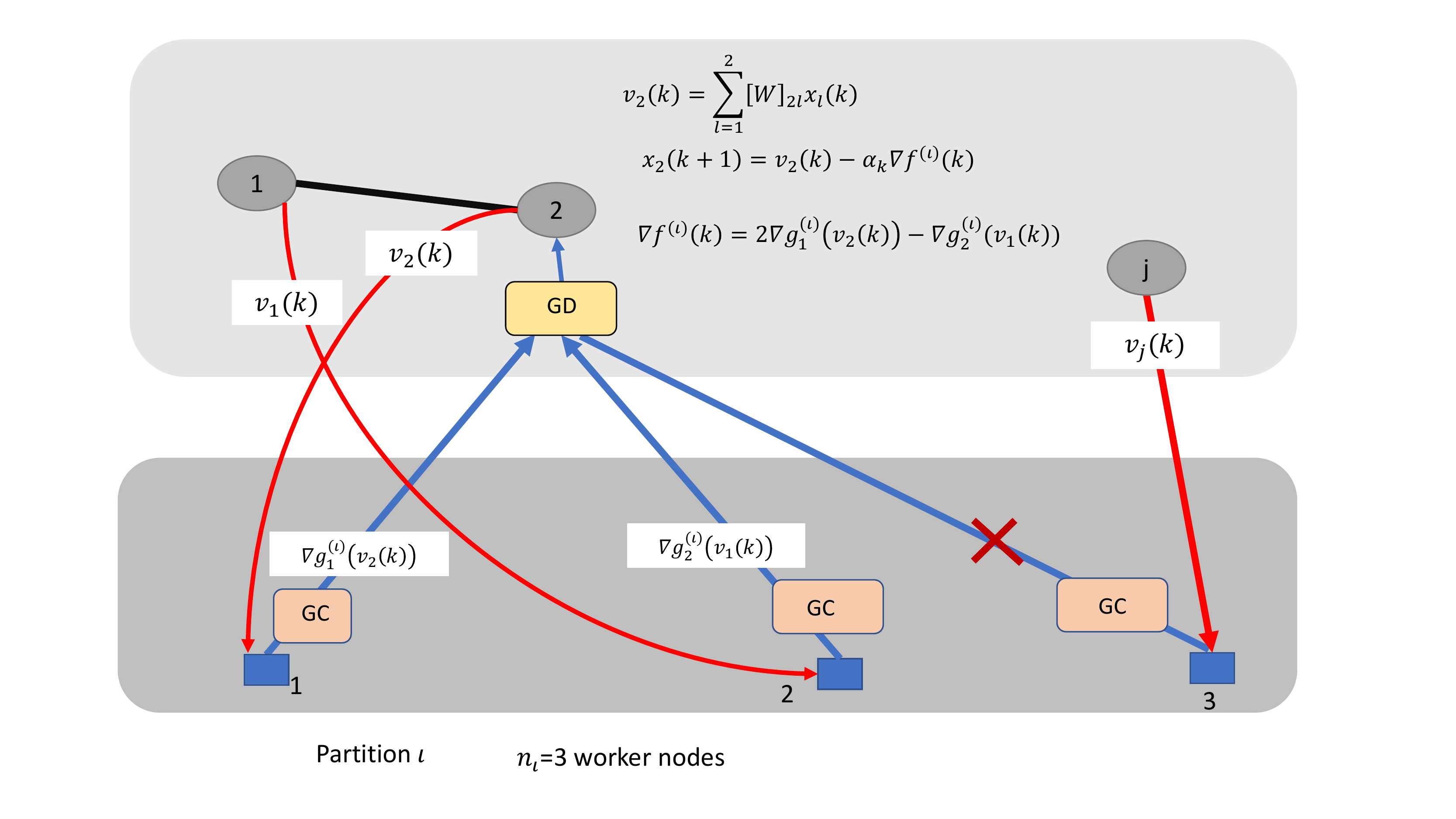}
\caption{Parameter server schematic with coding scheme $ 1 $ of allowed number of stragglers on partition $ 1 $ of $ 3 $ worker nodes connecting to sever $ 2 $. Notice that in this general scenario, the $ \widehat{\nabla}{f}_{2} $ that is calculated at server node $ 2 $ is identified with partition $ 1 $ gradient $ \widehat{\nabla}{f}^{(1)} $.}
\label{Fig.2}
\end{figure}

\section{Theoretical Analysis Background}\label{sec:pre-convergence}

\subsection{Specific Relations and Theorems}

\subsubsection{Convexity of functions}

We refer to \citep{bertsekas2003convex} for the definition of convex functions and the properties concerning nonexpansiveness of projections and the definition of minimum distance between a point and a closed set.

\subsubsection{Supermartingale Convergence Result}
In our analysis as in paper \cite{lee2013distributed}, we also make use of the supermartingale convergence result due to Robbins and Siegmund  (see  Lemma 10-11, p. 49-50 \cite{polyak1987introduction} or original paper \cite{robbins2012selected}, p. 111-135)
\begin{thm}
Let ${v_{k}}$, ${u_{k}}$, ${a_{k}}$ and ${b_{k}}$ be sequences of non-negative random variables such that
\begin{equation*}
\begin{split}
\end{split}
\mathbb{E}[v_{k+1}]|\mathcal{F}_{k}] \leq (1+a_{k})v_{k}-u_{k}+b_{k} \ \text{ for all } k \geq 0 \ a.s.
\end{equation*}
where $\mathcal{F}_{k}$ denotes the collection $v_{0},...,v_{k}$, $u_{0},...,u_{k}$, $a_{0},...,a_{k}$ and $b_{0},...,b_{k}$. Also, $\sum_{k=0}^{\infty}a_{k} < \infty$ and $\sum_{k=0}^{\infty}b_{k} < \infty$ a.s.\\
Then, we have $\lim_{k \to \infty}v_{k}=v$ for a random variable $v \geq 0$ a.s. and $\sum_{k=0}^{\infty}u_{k} < \infty$ a.s. 
\end{thm}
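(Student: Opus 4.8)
The plan is to reduce the given conditional inequality to a genuine supermartingale and then invoke Doob's supermartingale convergence theorem. First I would introduce the $\mathcal{F}_{k-1}$-measurable weights $\alpha_k=\prod_{i=0}^{k-1}(1+a_i)^{-1}$, with $\alpha_0=1$. Since $a_i\ge 0$ and $\sum_{i=0}^{\infty}a_i<\infty$ a.s., the product $\prod_{i=0}^{\infty}(1+a_i)$ lies in $[1,e^{\sum_i a_i}]\subset[1,\infty)$ a.s., so $\alpha_k$ is non-increasing and satisfies $e^{-\sum_i a_i}\le\alpha_k\le 1$; in particular $\alpha_k\downarrow\alpha_\infty\in(0,1]$ a.s. Multiplying the hypothesis by $\alpha_{k+1}$ and using $\alpha_{k+1}(1+a_k)=\alpha_k$ gives
\[
\mathbb{E}[\alpha_{k+1}v_{k+1}\mid\mathcal{F}_k]\le\alpha_k v_k-\alpha_{k+1}u_k+\alpha_{k+1}b_k\quad\text{a.s.}
\]
Writing $v_k'=\alpha_k v_k$, $u_k'=\alpha_{k+1}u_k$, $b_k'=\alpha_{k+1}b_k$, this reads $\mathbb{E}[v_{k+1}'\mid\mathcal{F}_k]\le v_k'-u_k'+b_k'$, and $0\le b_k'\le b_k$ forces $\sum_k b_k'<\infty$ a.s.

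Next I would pass to the accumulated process $z_k=v_k'+\sum_{i=0}^{k-1}u_i'-\sum_{i=0}^{k-1}b_i'$. A one-line computation (pulling the $\mathcal{F}_k$-measurable terms $u_i',b_i'$, $i\le k$, out of the conditional expectation and using the displayed inequality) shows $\mathbb{E}[z_{k+1}\mid\mathcal{F}_k]\le z_k$, so $\{z_k\}$ is a supermartingale for $\{\mathcal{F}_k\}$. Because $B:=\sum_{i=0}^{\infty}b_i'<\infty$ a.s. and $v_k',u_i'\ge 0$, we have $z_k\ge -B$ for every $k$, i.e.\ $\{z_k\}$ is bounded below by an a.s.-finite random variable. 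Applying the supermartingale convergence theorem — after the standard localization by the predictable stopping times $T_m=\inf\{k:\sum_{i<k}(a_i+b_i)>m\}$, which satisfy $T_m\uparrow\infty$ a.s.\ and make the stopped processes integrable and bounded below by a constant depending only on $m$ — yields that $z_k$ converges a.s.\ to an a.s.-finite limit $z_\infty$.

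To unwind: from $z_k\to z_\infty$ and $\sum_{i<k}b_i'\to B$ we get $S_k:=v_k'+\sum_{i<k}u_i'\to z_\infty+B=:S<\infty$ a.s. The partial sums $\sum_{i<k}u_i'$ are non-decreasing and dominated by $S_k$, hence converge, so $\sum_{i=0}^{\infty}u_i'<\infty$ a.s., and then $v_k'=S_k-\sum_{i<k}u_i'$ converges a.s.\ to a limit $\ge 0$. Finally, dividing out the rescaling, $v_k=\alpha_k^{-1}v_k'\to\alpha_\infty^{-1}\lim_k v_k'=:v\ge 0$ a.s., and $\sum_k u_k=\sum_k\alpha_{k+1}^{-1}u_k'\le\alpha_\infty^{-1}\sum_k u_k'<\infty$ a.s., which is exactly the assertion.

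The main obstacle is that $a_k$ and $b_k$ are arbitrary non-negative random variables — not assumed bounded or integrable — so neither $v_k'$ nor $z_k$ need be integrable, and $z_k$ is only bounded below by the \emph{random} quantity $-B$, so a naive appeal to Doob's theorem fails. The localization by $T_m$ is precisely what repairs this: on $\{k<T_m\}$ the accumulated penalties are $\le m$, the stopped supermartingale is bounded below by a constant, Doob applies, and letting $m\to\infty$ with $T_m\uparrow\infty$ a.s.\ gives the conclusion on a set of full measure. The secondary point to get right is the two-sided bound $e^{-\sum_i a_i}\le\alpha_k\le 1$: this is what makes the final division by $\alpha_k$ (resp.\ $\alpha_{k+1}$) harmless and transfers convergence of $v_k'$ and summability of $u_k'$ back to $v_k$ and $u_k$.
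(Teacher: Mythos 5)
The paper never proves this statement: it is quoted verbatim as the Robbins--Siegmund supermartingale convergence theorem, with a pointer to Polyak's book and the original Robbins--Siegmund paper, so there is no in-paper argument to compare against. Your plan is the standard proof of that theorem, and most of it is executed correctly: the rescaling $\alpha_k=\prod_{i<k}(1+a_i)^{-1}$ with $\alpha_{k+1}(1+a_k)=\alpha_k$, the two-sided bound $e^{-\sum_i a_i}\le\alpha_k\le 1$, the $\mathcal{F}_k$-measurability of $\alpha_{k+1}$ and of the partial sums, the supermartingale property of $z_k$, and the final unwinding (monotone partial sums of $u_i'$ dominated by a convergent sequence, then division by $\alpha_k$) are all sound.

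The one step that does not deliver what you claim is the localization, which is exactly the step you identify as the crux. First, with $T_m=\inf\{k:\sum_{i<k}(a_i+b_i)>m\}$ the bound $z_k\ge -m$ holds on $\{k<T_m\}$ but fails at $k=T_m$ itself: $z_{T_m}\ge-\sum_{i<T_m}b_i'$, and by the very definition of $T_m$ that sum exceeds $m$ minus nothing — it is only controlled by $m$ plus the unbounded final increment $a_{T_m-1}+b_{T_m-1}$ — so the stopped process $z_{k\wedge T_m}$ is \emph{not} bounded below by a constant. The fix is to shift the index (stop as soon as $\sum_{i\le k}b_i'>m$, so that $\sum_{i<T_m}b_i'\le m$) or to use the predictability you mention and stop at $T_m-1$. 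Second, and more substantively, stopping does nothing for integrability: $z_0=v_0$ and $v_0$ is not assumed to be in $L^1$, so Doob's supermartingale convergence theorem still does not apply to the stopped process; bounding $\sum_i(a_i+b_i)$ cannot manufacture $\mathbb{E}[v_k']<\infty$. The standard repairs are either an additional truncation on the $\mathcal{F}_0$-event $\{v_0\le N\}$ (then $z_{k\wedge T_m}\mathbf{1}_{\{v_0\le N\}}$ is a supermartingale with expectation at most $N$ and negative part at most $m$, Doob applies, and one lets $N,m\to\infty$), or invoking the a.s.\ convergence theorem for non-negative \emph{generalized} supermartingales applied to $z_{k\wedge(T_m-1)}+m\ge 0$, which needs no integrability at all. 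With either repair in place, the remainder of your argument goes through verbatim.
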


%\subsubsection{Scalar Sequences:}
%The convergence result of Lemma 3.1 in \cite{ram2010distributed} is applicable for scalar sequences,
%\begin{lem}
%If $\lim_{k \to \infty } \gamma(k) = \gamma $ and $ 0 < \beta < 1$, then $\lim_{k \to \infty} \sum_{l=0}^{k}\beta^{k-l} \gamma(l) = \frac{\gamma}{1-\beta}$
%\end{lem}

\section{Convergence Analysis for the Main Algorithm SRDO} \label{sec:convergence}

\begin{define}\label{Rk}
Let
{\small \begin{equation}\label{eq6}
\begin{split}
    R_{i}(k) & = -\alpha_{k}\sum_{j \in \Gamma_{i}(k)}{\bf A}^{(i)}_{fit,j}{\bf B}^{(i)}_{j,l}(\nabla{f}^{(i)}_{l}({\bf v}_{q}(k-k_{q}^{'}))-\nabla{f}^{(i)}_{l}({\bf v}_{i}(k))) \\
    & = \alpha_{k} {\bf \epsilon}_{i}(k)
\end{split}    
\end{equation}}

Hence, from \eqref{eq5} and \eqref{eq5.1} we have
\begin{equation}\label{eq7}
\begin{split}
{\bf x}_{i}(k+1)={\bf v}_{i}(k)-\alpha_{k}\nabla{f^{(i)}({\bf v}_{i}(k))}+R_{i}(k)    
\end{split}
\end{equation}

Or 
{ \begin{equation}\label{3.43}
\begin{split}
& \hspace{3cm} {\bf x}_{i}(k+1)=\overline{{\bf x}_{i}}(k+1) + R_{i}(k), \\
& \text{where}  \ \ \overline{{\bf x}_{i}}(k+1)={\bf v}_{i}(k)-\alpha_{k}\nabla{f^{(i)}({\bf v}_{i}(k))}
\end{split}
\end{equation}}

\end{define}

%We use the inequality (7.13) in Appendix A, where $\mathcal{X}= \cap_{i=1}^{n} \mathcal{X}_{i}$ for $ 0 \leq a \leq 1 $. \\

\begin{lem}\label{general_error_bound}
Let Assumptions 1, 2, 4 and 5 hold. Let the sequences $ \{{\bf x}_{i}(k)\} $ and $ \{ {\bf v}_{i}(k) \} $ be generated by method \eqref{eq2}. Then we have {\small \begin{equation}\label{ineq_general_error_bound}
\begin{split}
    & \sum_{l=1}^{n}\|  {\bf v}_{l}(k+1)-  {\bf x}^{*}\|^{2}  \leq \sum_{j=1}^{n}\| {\bf v}_{j}(k)- {\bf x}^{*}\|^{2}  \\
    & + 2\alpha_{k}\sum_{j=1}^{n}\sum_{(i)=1}^{p}\gamma_{(i)}\langle \nabla{f}^{(i)}( {\bf v}_{j}(k)), {\bf x}^{*}- {\bf v}_{j}(k)\rangle \\
    & + 2 \alpha_{k}\sum_{j=1}^{n} \sum_{(i)=1}^{p}\gamma_{(i)} \langle {\bf \epsilon}_{j,(i)}(k), {\bf v}_{j}(k)- {\bf x}^{*}\rangle \\
    & + 2 \alpha_{k}^{2} \frac{\sum_{j=1}^{n}\sum_{(i)=1}^{p}\gamma_{(i)}}{(\sum_{(i)=1}^{p}\gamma_{(i)})^{2}}[\|\nabla{f}^{(i)}( {\bf v}_{j}(k))\|^{2} + \| {\bf \epsilon}_{j,(i)}(k)\|^{2}]
\end{split}
\end{equation}}
\end{lem}

\begin{lem}\label{type1_error_bound}
Let Assumptions 1, 2, 4 and 5 hold. Let the functions in Assumption 1 also satisfy  $ f^{(i)}({\bf x}^{*}) = f^{(i)}({\bf x}^{(i)}) $ for all $ (i) $. Let the sequences $ \{{\bf x}_{i}(k)\} $ and $ \{ {\bf v}_{i}(k) \} $, $ i \in V $ be generated by method \eqref{eq2}. Then we have 
{ \begin{equation}\label{ineq_type1_error_bound}
\begin{split}
    & ( 1 -\mu ) \sum_{l=1}^{n}\|  {\bf v}_{l}(k+1) - {\bf x}^{*}\|^{2}  \leq (1-\mu) \sum_{j=1}^{n}\|{\bf v}_{j}(k)- {\bf x}^{*}\|^{2}  \\
   & + 2 \alpha_{k}\sum_{j=1}^{n}\sum_{(i)=1}^{p}\gamma_{(i)} \langle {\bf \epsilon}_{j,(i)}(k), {\bf v}_{j} - {\bf x}^{*} \rangle  + 2 \alpha_{k}^{2} \frac{\sum_{j=1}^{n}\sum_{(i)=1}^{p}\gamma_{(i)}}{(\sum_{(i)=1}^{p}\gamma_{(i)})^{2}}\|{\bf \epsilon}_{j,(i)}(k)\|^{2}\\
    & - 2 a \alpha_{k} \sum_{j=1}^{n}\sum_{(i)=1}^{p}\gamma_{(i)} (b - \frac{2 \alpha_{k} a}{(\sum_{(i)=1}^{p}\gamma_{(i)})^{2}} ) \| {\bf v}_{j}(k) - {\bf x}^{(i)} \| ^{2}  
\end{split}
\end{equation}}
\end{lem}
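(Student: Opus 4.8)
The plan is to treat Lemma~\ref{type1_error_bound} as a refinement of Lemma~\ref{general_error_bound}. That lemma already produces the one–step recursion whose leading term is $\sum_{j=1}^{n}\|{\bf v}_{j}(k)-{\bf x}^{*}\|^{2}$, plus a cross term in $\langle\nabla f^{(i)}({\bf v}_{j}(k)),{\bf x}^{*}-{\bf v}_{j}(k)\rangle$, a noise cross term in ${\bf \epsilon}_{j,(i)}(k)$, and an $O(\alpha_{k}^{2})$ term carrying $\|\nabla f^{(i)}({\bf v}_{j}(k))\|^{2}+\|{\bf \epsilon}_{j,(i)}(k)\|^{2}$. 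So the whole job is to convert the two $\nabla f^{(i)}$ contributions into a contraction of the leading term; the ${\bf \epsilon}$ contributions should then pass through verbatim, reproducing the right-hand side of \eqref{ineq_type1_error_bound}.

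First I would exploit the new hypothesis. Writing ${\bf x}^{(i)}\in\argmin f^{(i)}$, the condition $f^{(i)}({\bf x}^{*})=f^{(i)}({\bf x}^{(i)})$ forces ${\bf x}^{*}$ to be a global minimizer of \emph{each} partition function $f^{(i)}$, whence — using differentiability in Assumption~\ref{A3.1} — $\nabla f^{(i)}({\bf x}^{*})={\bf 0}$ for every $(i)$. Then strong convexity of $f^{(i)}$ (with modulus $\sigma$, say, which is precisely what the constant $\mu$ in the statement encodes together with $\alpha_{k}$ and the $\gamma_{(i)}$) combined with $\nabla f^{(i)}({\bf x}^{*})={\bf 0}$ yields
\begin{equation*}
\begin{split}
\langle\nabla f^{(i)}({\bf v}_{j}(k)),{\bf x}^{*}-{\bf v}_{j}(k)\rangle & =\langle\nabla f^{(i)}({\bf v}_{j}(k))-\nabla f^{(i)}({\bf x}^{*}),\,{\bf x}^{*}-{\bf v}_{j}(k)\rangle \\
& \le-\sigma\,\|{\bf v}_{j}(k)-{\bf x}^{*}\|^{2}.
\end{split}
\end{equation*}
Summing over $j$ and over the partitions $(i)$ with the weights $\gamma_{(i)}$, the cross term $2\alpha_{k}\sum_{j}\sum_{(i)}\gamma_{(i)}\langle\nabla f^{(i)}({\bf v}_{j}(k)),{\bf x}^{*}-{\bf v}_{j}(k)\rangle$ is dominated by $-2\alpha_{k}\sigma\big(\sum_{(i)}\gamma_{(i)}\big)\sum_{j}\|{\bf v}_{j}(k)-{\bf x}^{*}\|^{2}$, and merging it into the leading $\sum_{j}\|{\bf v}_{j}(k)-{\bf x}^{*}\|^{2}$ gives the factor $(1-\mu)$, where $\mu$ is (up to this normalization) $2\alpha_{k}\sigma\sum_{(i)}\gamma_{(i)}$, which lies in $(0,1)$ once $k$ is large enough by the diminishing-stepsize assumption.

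Second I would dispose of the leftover quadratic term. Because $\nabla f^{(i)}({\bf x}^{*})={\bf 0}$, the Lipschitz-gradient bound of Assumption~\ref{A3.1} gives $\|\nabla f^{(i)}({\bf v}_{j}(k))\|^{2}\le L^{2}\|{\bf v}_{j}(k)-{\bf x}^{*}\|^{2}$, so the $\|\nabla f^{(i)}\|^{2}$ piece of the last term in Lemma~\ref{general_error_bound} contributes only a term of order $\alpha_{k}^{2}\sum_{j}\|{\bf v}_{j}(k)-{\bf x}^{*}\|^{2}$; since this is $O(\alpha_{k}^{2})$ while the contraction gained above is $\Theta(\alpha_{k})$, it can be folded into the same $(1-\mu)$ coefficient for all sufficiently large $k$. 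What survives is then exactly $(1-\mu)\sum_{j}\|{\bf v}_{j}(k)-{\bf x}^{*}\|^{2}$, the untouched noise cross term $2\alpha_{k}\sum_{j}\sum_{(i)}\gamma_{(i)}\langle{\bf \epsilon}_{j,(i)}(k),{\bf v}_{j}(k)-{\bf x}^{*}\rangle$, and the $2\alpha_{k}^{2}\frac{\sum_{j}\sum_{(i)}\gamma_{(i)}}{(\sum_{(i)}\gamma_{(i)})^{2}}\|{\bf \epsilon}_{j,(i)}(k)\|^{2}$ term, i.e.\ \eqref{ineq_type1_error_bound}.

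The step I expect to be the main obstacle is this last absorption: after the smoothness substitution the $\alpha_{k}^{2}\|\nabla f^{(i)}({\bf v}_{j}(k))\|^{2}$ term points in the wrong direction, so one must argue that the first-order strong-convexity gain still strictly dominates it — which is why the inequality holds only eventually (for $k$ past some index), and why the precise normalization $\frac{\sum_{j}\sum_{(i)}\gamma_{(i)}}{(\sum_{(i)}\gamma_{(i)})^{2}}$ and the exact form of $\mu$ must be tracked so that the $O(\alpha_{k})$ and $O(\alpha_{k}^{2})$ contributions are compared on the same footing and nothing is double counted.
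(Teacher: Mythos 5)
Your proposal is correct in substance and shares the paper's overall architecture --- both treat the lemma as a refinement of Lemma~\ref{general_error_bound} and work on converting the two $\nabla f^{(i)}$ contributions into a contraction of $\sum_j\|{\bf v}_j(k)-{\bf x}^*\|^2$ --- but the mechanics of the two key estimates differ. The paper bounds the cross term by plain convexity, $\langle\nabla f^{(i)}({\bf v}_j(k)),{\bf x}^*-{\bf v}_j(k)\rangle \le f^{(i)}({\bf x}^*)-f^{(i)}({\bf v}_j(k)) = f^{(i)}({\bf x}^{(i)})-f^{(i)}({\bf v}_j(k))$, and bounds the squared gradient norm by the descent-lemma inequality $\|\nabla f^{(i)}({\bf v}_j(k))\|^2 \le \bigl(f^{(i)}({\bf v}_j(k))-f^{(i)}({\bf x}^{(i)})\bigr)/(\alpha_k-\tfrac{\alpha_k^2L}{2})$; the two function-value gaps then merge into a single nonpositive term, and only in the final sentence does the paper assert (without argument) that this term is $\le -\mu\sum_j\|{\bf v}_j(k)-{\bf x}^*\|^2$ for $k>k_1$ --- a step that silently requires quadratic growth of each $f^{(i)}$ around ${\bf x}^*$. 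You instead observe that the hypothesis forces $\nabla f^{(i)}({\bf x}^*)={\bf 0}$, then use strong convexity to get the cross term $\le-\sigma\|{\bf v}_j(k)-{\bf x}^*\|^2$ and the Lipschitz-gradient property to get $\|\nabla f^{(i)}({\bf v}_j(k))\|^2\le L^2\|{\bf v}_j(k)-{\bf x}^*\|^2$, so the contraction in squared distance appears directly. Your route is the more transparent one: it names the strong-convexity hypothesis that the paper's last step needs anyway but that is absent from Assumption~\ref{A3.1} (it is mentioned only in the abstract and resurfaces as $\sigma_{(i)}$ in the convergence-rate section). The one caveat, which afflicts both proofs equally, is that the resulting $\mu$ is of order $\alpha_k$ and hence cannot be a $k$-independent constant when $\alpha_k\to 0$; as stated, the lemma's fixed $\mu$ requires either a bounded-below stepsize over the range of $k$ considered or a reinterpretation of $\mu$ as $\mu_k$.
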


\begin{prop}\label{Convergence_type1}
Let Assumptions 1-5 hold. Let the functions in Assumption 1 be strongly convex and satisfy  $ f^{(i)}( {\bf x}^{*}) = f^{(i)}( {\bf x}^{(i)}) $ for all $ (i) $. Let the sequences $ \{ {\bf x}_{i}(k) \} $ and $ \{ {\bf v}_{i}(k) \} $, $ i \in V  $ be generated by method \eqref{eq2} with stepsizes and errors as given in Assumptions 3 and 5. Assume that problem \eqref{eq1} has a non-empty optimal solution set $ \mathcal{X}^{*} $ as given in Assumption 1. Then, the sequences $ \{ {\bf x}_{i}(k) \} $ and $ \{ {\bf v}_{i}(k) \} $, $ i \in V  $ converge to the same random point in $ \mathcal{X}^{*} $ with probability 1.
\end{prop}

\begin{proof}
With errors as in Assumption 3 we have $ \|R_{i}(k)\|$ and $\|\epsilon_{j,(i)}(k)\|$ as given in Appendix.
Then having $ f^{(i)}( {\bf x}^{*}) = f^{(i)}( {\bf x}^{(i)}) $ for all $ (i) $ also satisfied we have Lemma~\ref{type1_error_bound} satisfied. Then we can use  the resulting inequality \eqref{ineq_type1_error_bound} with the substitution of $ \| {\bf \epsilon}_{j,(i)}(k) \| $ from Appendix to get % which is valid for $ k \geq k_{0} $ where $ k_{0} $ is dependent on $ f^{(i)}( {\bf x}^{(i)}) $ or $ f^{(i)}( {\bf x}^{*}) $ for all $ (i) $ and is as in lemma, that is
{ \begin{equation}\label{ineq_type1_error_bound_subst}
\begin{split}
    & \sum_{l=1}^{n}\|  {\bf v}_{l}(k+1) -  {\bf x}^{*}\|^{2}  \leq (1-\mu)\sum_{j=1}^{n}\| {\bf v}_{j}(k) - {\bf x}^{*}\|^{2}  \\
   & + 4 L \alpha_{k} \sum_{j=1}^{n} \max_{(i)}
\| {\bf A}^{(i)} \| _{\infty} \|{\bf B}^{(i)}\| _{2,\infty}  \max_{k-H \leq \hat{k} \leq k ; q \in V} \| {\bf v}_{q}(\hat{k}) -  {\bf x}^{*}\|^{2}   \\
& + 8 L^{2} \alpha_{k}^{2} \frac{\sum_{j=1}^{n} \max_{(i)} \| {\bf A}^{(i)} \| ^{2} _{\infty} \|{\bf B}^{(i)}\|^{2} _{2,\infty} \max_{k-H \leq \hat{k} \leq k ; q \in V } \| {\bf v}_{q}(\hat{k}) -  {\bf x}^{*}\|^{2} }{(\sum_{(i)=1}^{p}\gamma_{(i)})^{2}} \\
& - 2 a \alpha_{k} \sum_{j=1}^{n}\sum_{(i)=1}^{p}\gamma_{(i)} (b - \frac{2 \alpha_{k} a}{(\sum_{(i)=1}^{p}\gamma_{(i)})^{2}} ) \| {\bf v}_{j}(k) - {\bf x}^{(i)} \| ^{2}   
\end{split}
\end{equation}}
But in order to be able to use Lemma~\ref{Lemma6a} the last term in \eqref{ineq_type1_error_bound_subst} should be negative.Which means $ b - \frac{2 \alpha_{k} a}{(\sum_{(i)=1}^{p}\gamma_{(i)})^{2}}  \geq 0 $ where $ b = \langle \overrightarrow{u},\overrightarrow{v} \rangle $. And $ \nabla{f}^{(i)}({\bf v}_{j}(k)) - \nabla{f}^{(i)}({\bf x}^{(i)}) = a \| {\bf v}_{j}(k) - {\bf x}^{(i)} \| \overrightarrow{u} $ where $ a \leq L $.
However, $ f^{(i)} $ is strongly convex for every $ (i) $, then $ \langle \nabla{f}^{(i)}({\bf v}_{j}(k)) - \nabla{f}^{(i)}({\bf x}^{(i)}), {\bf v}_{j}(k) - {\bf x}^{(i)} \rangle  = a \| {\bf v}_{j}(k) - {\bf x}^{(i)} \| ^{2} \langle \overrightarrow{u},\overrightarrow{v} \rangle  \geq \sigma_{(i)} ^{2} \| {\bf v}_{j}(k) - {\bf x}^{(i)} \| ^{2} $, that is $ \langle \overrightarrow{u},\overrightarrow{v} \rangle \geq \frac{\sigma_{(i)}}{a} $. 
Therefore, a sufficient condition is 
\begin{equation}\label{suffcond1}
\begin{split}
\frac{2 \alpha_{k} a}{(\sum_{(i)=1}^{p}\gamma_{(i)})^{2}}  \leq  \frac{2 \alpha_{k} L}{(\sum_{(i)=1}^{p}\gamma_{(i)})^{2}}  \leq \frac{\sigma_{(i)}}{a} = \langle \overrightarrow{u},\overrightarrow{v} \rangle 
\end{split}
\end{equation}
The sufficient condition in \eqref{suffcond1} is satisfied for $ k \geq k_{0} $ since $ \alpha_{k} \rightarrow 0 $.
Then \eqref{ineq_type1_error_bound_subst} is similar to the martingale inequality \eqref{eqw} of Lemma~\ref{Lemma6a} for $ k \geq k_{0} $. By the result of Lemma~\ref{Lemma6a} we have for $ v_{k}= \sum_{i=1}^{n} \| {\bf v}_{i}(k) - {\bf x}^{*} \|^{2} $ that
\begin{equation}
\begin{split}
    \sum_{i=1}^{n} \| {\bf v}_{i}(k) - {\bf x}^{*} \|^{2} \leq \rho ^{k}  V_{0} 
\end{split}    
\end{equation}
for $ k \geq \bar{k}_{1} =\bar{k}= k_{0}+B $ where $ \rho $, $ V_{0} $ and $ \bar{k} $ are as in the lemma. Therefore, as $ k \rightarrow \infty $ we have $ \sum_{i=1}^{n} \| {\bf v}_{i}(k) - {\bf x}^{*} \|^{2} \rightarrow 0 $. That is, $  \| {\bf v}_{i}(k) - {\bf x}^{*} \| \rightarrow 0 $ for all $ i \in V $. 
Then in view of (\ref{eq2}b) where $ {\bf x}_{i}(k+1) = {\bf v}_{i}(k) -\alpha_{k}\nabla{f}^{(i)}({\bf v}_{i}(k)) +  R_{i}(k) $ and since $ R_{i}(k) \rightarrow 0 $ because  $  \| {\bf v}_{i}(k) - {\bf x}^{*} \| \rightarrow 0 $  or $ \alpha_{k} \rightarrow 0 $ and $ \alpha_{k} \nabla{f}^{(i)}({\bf v}_{i}(k))\rightarrow 0 $ since $ \alpha_{k} \rightarrow 0 $ and $ {\bf v}_{i}(k) \rightarrow {\bf x}^{*} $, thus $ {\bf v}_{i}(k) \in \mathcal{X} $ where $ \nabla{f}^{(i)}({\bf v}_{i}(k)) \leq G_{f} $. Therefore, $ {\bf x}_{i}(k+1) = {\bf v}_{i}(k) = {\bf x}^{*} $ as $ k \rightarrow \infty $.

\end{proof}

\begin{prop}\label{Convergence_type1a}
Let Assumptions 1-5 hold. Let the functions in Assumption 1  satisfy  $ f^{(i)}( {\bf x}^{*}) = f^{(i)}( {\bf x}^{(i)}) $ for all $ (i) $. Let the sequences $ \{ {\bf x}_{i}(k) \} $ and $ \{ {\bf v}_{i}(k) \} $, $ i \in V  $ be generated by method \eqref{eq2} with stepsizes and errors as given in Assumptions 3 and 5. Assume that problem \eqref{eq1} has a non-empty optimal solution set $ \mathcal{X}^{*} $ as given in Assumption 1. Then, the sequences $ \{ {\bf x}_{i}(k) \} $ and $ \{ {\bf v}_{i}(k) \} $, $ i \in V  $ converge to the same random point in $ \mathcal{X}^{*} $ with probability 1.
\end{prop}

\begin{proof}
With errors as in Assumption 3 we have $ \|R_{i}(k)\|$ and $\|\epsilon_{j,(i)}(k)\|$ as given in Appendix.
Then having $ f^{(i)}( {\bf x}^{*}) = f^{(i)}( {\bf x}^{(i)}) $ for all $ (i) $ also satisfied we have Lemma~\ref{type1_error_bound} satisfied. Then we can use  the resulting inequality \eqref{ineq_type1_error_bound} with the substitution of $ \| {\bf \epsilon}_{j,(i)}(k) \| $ from Appendix to get % which is valid for $ k \geq k_{0} $ where $ k_{0} $ is dependent on $ f^{(i)}( {\bf x}^{(i)}) $ or $ f^{(i)}( {\bf x}^{*}) $ for all $ (i) $ and is as in lemma, that is
{ \begin{equation}\label{ineq_type1_error_bound_substa}
\begin{split}
    & \sum_{l=1}^{n}\|  {\bf v}_{l}(k+1) -  {\bf x}^{*}\|^{2}  \leq (1-\mu)\sum_{j=1}^{n}\| {\bf v}_{j}(k) - {\bf x}^{*}\|^{2}  \\
   & + 4 L \alpha_{k} \sum_{j=1}^{n} \max_{(i)}
\| {\bf A}^{(i)} \| _{\infty} \|{\bf B}^{(i)}\| _{2,\infty}  \max_{k-H \leq \hat{k} \leq k ; q \in V} \| {\bf v}_{q}(\hat{k}) -  {\bf x}^{*}\|^{2}   \\
& + 8 L^{2} \alpha_{k}^{2} \frac{\sum_{j=1}^{n} \max_{(i)} \| {\bf A}^{(i)} \| ^{2} _{\infty} \|{\bf B}^{(i)}\|^{2} _{2,\infty} \max_{k-H \leq \hat{k} \leq k ; q \in V } \| {\bf v}_{q}(\hat{k}) -  {\bf x}^{*}\|^{2} }{(\sum_{(i)=1}^{p}\gamma_{(i)})^{2}} \\
& - 2 a \alpha_{k} \sum_{j=1}^{n}\sum_{(i)=1}^{p}\gamma_{(i)} (b - \frac{2 \alpha_{k} a}{(\sum_{(i)=1}^{p}\gamma_{(i)})^{2}} ) \| {\bf v}_{j}(k) - {\bf x}^{(i)} \| ^{2}   
\end{split}
\end{equation}}
But in order to be able to use Lemma~\ref{Lemma6a} on \eqref{ineq_type1_error_bound_substa} we must have
{ \begin{equation}\label{cond1a}
\begin{split}
  & - \mu \sum_{j=1}^{n}\| {\bf v}_{j}(k) - {\bf x}^{*}\|^{2}   - 2 a \alpha_{k} \sum_{j=1}^{n}\sum_{(i)=1}^{p}\gamma_{(i)} (b - \\
   & \ \ \ \ \ \frac{2 \alpha_{k} a}{(\sum_{(i)=1}^{p}\gamma_{(i)})^{2}} ) \| {\bf v}_{j}(k) - {\bf x}^{(i)} \| ^{2} \leq 0  
\end{split}
\end{equation}}
That is
{ \begin{equation}
\begin{split}
   & -\mu \sum_{j=1}^{n}\| {\bf v}_{j}(k) - {\bf x}^{*}\|^{2}   +  \sum_{j=1}^{n}\sum_{(i)=1}^{p}\gamma_{(i)} \frac{4 \alpha_{k}^{2} a^{2}}{(\sum_{(i)=1}^{p}\gamma_{(i)})^{2}} ) \| {\bf v}_{j}(k) - {\bf x}^{(i)} \| ^{2} \\
   & \leq 2 a \alpha_{k} \sum_{j=1}^{n}\sum_{(i)=1}^{p}\gamma_{(i)} b \| {\bf v}_{j}(k) - {\bf x}^{(i)} \| ^{2}
\end{split}
\end{equation}}
which reduces to the sufficient condition
{ \begin{equation}\label{suffcond1a}
\begin{split}
  & -\mu \sum_{j=1}^{n}\| {\bf v}_{j}(k) - {\bf x}^{*}\|^{2}   +  \sum_{j=1}^{n}\sum_{(i)=1}^{p}\gamma_{(i)} \frac{8 \alpha_{k}^{2} a^{2}}{(\sum_{(i)=1}^{p}\gamma_{(i)})^{2}} ) \| {\bf v}_{j}(k) - {\bf x}^{*}\|^{2} \\
  & + \sum_{j=1}^{n}\sum_{(i)=1}^{p}\gamma_{(i)} \frac{8 \alpha_{k}^{2} a^{2}}{(\sum_{(i)=1}^{p}\gamma_{(i)})^{2}} ) \| {\bf x}^{*} - {\bf x}^{(i)} \| ^{2} \\
  & \leq 2 a \alpha_{k} \sum_{j=1}^{n}\sum_{(i)=1}^{p}\gamma_{(i)} b \| {\bf v}_{j}(k) - {\bf x}^{(i)} \| ^{2}
\end{split}
\end{equation}}
But $ 0 \geq b = \langle \overrightarrow{u},\overrightarrow{v} \rangle \leq 1 $. And $ \nabla{f}^{(i)}({\bf v}_{j}(k)) - \nabla{f}^{(i)}({\bf x}^{(i)}) = a \| {\bf v}_{j}(k) - {\bf x}^{(i)} \| \overrightarrow{u} $ where $ a \leq L $.
Thus, the right hand side of \eqref{suffcond1a} is nonnegative.
Therefore, the sufficient condition in \eqref{suffcond1a} reduces to
{ \begin{equation}\label{suffcond1aa}
\begin{split}
& + \sum_{j=1}^{n}\sum_{(i)=1}^{p}\gamma_{(i)} \frac{8 \alpha_{k}^{2} a^{2}}{(\sum_{(i)=1}^{p}\gamma_{(i)})^{2}} ) \| {\bf x}^{*} - {\bf x}^{(i)} \| ^{2} \\
& \leq ( \mu - \sum_{(i)=1}^{p}\gamma_{(i)} \frac{8 \alpha_{k}^{2}a^{2}}{(\sum_{(i)=1}^{p}\gamma_{(i)})^{2}} \sum_{j=1}^{n} \| {\bf v}_{j}(k) - {\bf x}^{*}\|^{2}
\end{split}
\end{equation}}
But the sufficient condition in \eqref{suffcond1aa} is satisfied for $ 0 < \mu < 1 $  and $ 0 < a <  L $ since $ \alpha_{k} \rightarrow 0 $. Thus, 
{ \begin{equation}\label{suffcond1aaa}
\begin{split}
& + \sum_{j=1}^{n}\sum_{(i)=1}^{p}\gamma_{(i)} \frac{8 \alpha_{k}^{2} a^{2}}{(\sum_{(i)=1}^{p}\gamma_{(i)})^{2}} ) \| {\bf x}^{*} - {\bf x}^{(i)} \| ^{2} \\
& + \sum_{j=1}^{n}\sum_{(i)=1}^{p}\gamma_{(i)} \frac{8 \alpha_{k}^{2} L^{2}}{(\sum_{(i)=1}^{p}\gamma_{(i)})^{2}} ) \| {\bf x}^{*} - {\bf x}^{(i)} \| ^{2} \\
& \leq ( \mu - \sum_{(i)=1}^{p}\gamma_{(i)} \frac{8 \alpha_{k}^{2}L^{2}}{(\sum_{(i)=1}^{p}\gamma_{(i)})^{2}} \sum_{j=1}^{n} \| {\bf v}_{j}(k) - {\bf x}^{*}\|^{2} \\
& \leq ( \mu - \sum_{(i)=1}^{p}\gamma_{(i)} \frac{8 \alpha_{k}^{2}a^{2}}{(\sum_{(i)=1}^{p}\gamma_{(i)})^{2}} \sum_{j=1}^{n} \| {\bf v}_{j}(k) - {\bf x}^{*}\|^{2}
\end{split}
\end{equation}}
Therefore, for $ k \geq k_{0} $ we have the right hand side of \eqref{suffcond1aaa} to be greater than $ E $ and the left hand side to be less than $ E $ since $ \alpha_{k} \rightarrow 0 $. Then \eqref{ineq_type1_error_bound_substa} is similar to the martingale inequality \eqref{eqw} of Lemma~\ref{Lemma6a} for $ k \geq k_{0} $. By the result of Lemma~\ref{Lemma6a} we have for $ v_{k}= \sum_{i=1}^{n} \| {\bf v}_{i}(k) - {\bf x}^{*} \|^{2} $ that
\begin{equation}
\begin{split}
    \sum_{i=1}^{n} \| {\bf v}_{i}(k) - {\bf x}^{*} \|^{2} \leq \rho ^{k}  V_{0} 
\end{split}    
\end{equation}
for $ k \geq \bar{k}_{1} =\bar{k}= k_{0}+B $ where $ \rho $, $ V_{0} $ and $ \bar{k} $ are as in the lemma. Therefore, as $ k \rightarrow \infty $ we have $ \sum_{i=1}^{n} \| {\bf v}_{i}(k) - {\bf x}^{*} \|^{2} \rightarrow 0 $. That is, $  \| {\bf v}_{i}(k) - {\bf x}^{*} \| \rightarrow 0 $ for all $ i \in V $. 
Then in view of (\ref{eq2}b) where $ {\bf x}_{i}(k+1) = {\bf v}_{i}(k) -\alpha_{k}\nabla{f}^{(i)}({\bf v}_{i}(k)) +  R_{i}(k) $ and since $ R_{i}(k) \rightarrow 0 $ because  $  \| {\bf v}_{i}(k) - {\bf x}^{*} \| \rightarrow 0 $  or $ \alpha_{k} \rightarrow 0 $ and $ \alpha_{k} \nabla{f}^{(i)}({\bf v}_{i}(k))\rightarrow 0 $ since $ \alpha_{k} \rightarrow 0 $ and $ {\bf v}_{i}(k) \rightarrow {\bf x}^{*} $, thus $ {\bf v}_{i}(k) \in \mathcal{X} $ where $ \nabla{f}^{(i)}({\bf v}_{i}(k)) \leq G_{f} $. Therefore, $ {\bf x}_{i}(k+1) = {\bf v}_{i}(k) = {\bf x}^{*} $ as $ k \rightarrow \infty $.

\end{proof}

\begin{lem}\label{type2_error_bound}
Let Assumptions 1, 2, 4 and 5 hold. Let the functions in Assumption 1 also satisfy  $ f^{(i)}({\bf x}^{*}) > f^{(i)}(x^{(i)}) $ for at least one $ (i) $. Let the sequences $ \{{\bf x}_{i}(k)\} $ and $ \{ {\bf v}_{i}(k) \} $, $ i \in V $ be generated by method \eqref{eq2}.
Then we have
{ \begin{equation}\label{ineq_type2_error_bound}
\begin{split}
    & ( 1- \mu) \sum_{l=1}^{n}\|  {\bf v}_{l}(k+1) - {\bf x}^{*}\|^{2}  \leq (1-\mu) \sum_{j=1}^{n}\|{\bf v}_{j}(k)- {\bf x}^{*}\|^{2}  \\
   & + 2 \alpha_{k}\sum_{j=1}^{n}\sum_{(i)=1}^{p}\gamma_{(i)} \langle {\bf \epsilon}_{j,(i)}(k), {\bf v}_{j} - {\bf x}^{*} \rangle  + 2 \alpha_{k}^{2} \frac{\sum_{j=1}^{n}\sum_{(i)=1}^{p}\gamma_{(i)}}{(\sum_{(i)=1}^{p}\gamma_{(i)})^{2}}\|{\bf \epsilon}_{j,(i)}(k)\|^{2}\\
    & + 2 \alpha_{k} \sum_{j=1}^{n}\sum_{(i)\in I }\gamma_{(i)} \langle \nabla{f}^{(i)}({\bf v}_{j}(k)), {\bf x}^{*} - {\bf x}^{(i)} \rangle  \\
      & - 2 a \alpha_{k} \sum_{j=1}^{n}\sum_{(i)=1}^{p}\gamma_{(i)} (b - \frac{2 \alpha_{k} a}{(\sum_{(i)=1}^{p}\gamma_{(i)})^{2}} ) \| {\bf v}_{j}(k) - {\bf x}^{(i)} \| ^{2}   
\end{split}
\end{equation}}
\end{lem}

\begin{prop}\label{sum_v_i-x^*_type2}
Let Assumptions 1-5 hold. Let the functions in Assumption 1 be strongly convex and satisfy  $ f^{(i)}({\bf x}^{*}) > f^{(i)}( {\bf x}^{(i)}) $ for at least one $ (i) $. Let the sequences $ \{ {\bf x}_{i}(k) \} $ and $ \{ {\bf v}_{i}(k) \} $, $ i \in V  $ be generated by method \eqref{eq2} with stepsizes and errors as given in Assumptions 3 and 5. Assume that problem \eqref{eq1} has a non-empty optimal solution set $ \mathcal{X}^{*} $ as given in Assumption 1. Then, the sequence $ \{ \sum_{i=1}^{n} \| {\bf v}_{i}(k) - {\bf x}^{*} \|^{2} \} $ converge to a nonnegative value $ D $.
\end{prop}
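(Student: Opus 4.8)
The plan is to reduce the claim to the supermartingale (Robbins--Siegmund) convergence theorem recalled above, by turning the inequality of Lemma~\ref{type2_error_bound} into a scalar recursion for $V_{k}\triangleq\sum_{i=1}^{n}\|{\bf v}_{i}(k)-{\bf x}^{*}\|^{2}$. The only structural difference from the type-1 situation of Proposition~\ref{Convergence_type1} is the deterministic term $2\alpha_{k}\sum_{j=1}^{n}\sum_{(i)\in I}\gamma_{(i)}\langle\nabla f^{(i)}({\bf x}^{*}),{\bf x}^{*}-{\bf x}^{(i)}\rangle$ in \eqref{ineq_type2_error_bound}. First I would bound this term on both sides: since ${\bf x}^{(i)}$ minimizes $f^{(i)}$ we have $\nabla f^{(i)}({\bf x}^{(i)})=0$, so Assumption 1(b) and Cauchy--Schwarz give $\langle\nabla f^{(i)}({\bf x}^{*}),{\bf x}^{*}-{\bf x}^{(i)}\rangle\leq L\|{\bf x}^{*}-{\bf x}^{(i)}\|^{2}$, while convexity of $f^{(i)}$ evaluated at ${\bf x}^{*}$ and ${\bf x}^{(i)}$ gives $\langle\nabla f^{(i)}({\bf x}^{*}),{\bf x}^{*}-{\bf x}^{(i)}\rangle\geq f^{(i)}({\bf x}^{*})-f^{(i)}({\bf x}^{(i)})>0$ for $(i)\in I$. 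Hence this term equals $2\alpha_{k}c$ for a fixed constant $c\in(0,\infty)$; it is non-summable, and it is exactly this feature that lets the limit be a positive $D$ rather than $0$ (in Proposition~\ref{Convergence_type1} the hypothesis $f^{(i)}({\bf x}^{*})=f^{(i)}({\bf x}^{(i)})$ forces $\nabla f^{(i)}({\bf x}^{*})=0$ and the term disappears).

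Next, exactly as in the proof of Proposition~\ref{Convergence_type1}, I would substitute the Appendix bounds on $\|{\bf\epsilon}_{j,(i)}(k)\|$: each is controlled by $L\,\|{\bf A}^{(i)}\|_{\infty}\,\|{\bf B}^{(i)}\|_{2,\infty}$ times the delayed quantity $M_{k}\triangleq\max_{k-H\leq\hat k\leq k,\;q\in V}\|{\bf v}_{q}(\hat k)-{\bf x}^{*}\|^{2}$, valid for all $k\geq k_{0}$. This reduces \eqref{ineq_type2_error_bound} to an inequality of the form $\mathbb{E}[V_{k+1}\mid\mathcal{F}_{k}]\leq(1-\mu)V_{k}+\alpha_{k}c_{1}M_{k}+\alpha_{k}^{2}c_{2}M_{k}+2\alpha_{k}c$ for $k\geq k_{0}$, with positive constants $c_{1},c_{2}$ and $\mathcal{F}_{k}$ the natural filtration. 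A preliminary step is to show $\{V_{k}\}$, hence $\{M_{k}\}$, is almost surely bounded: since $\alpha_{k}\to0$ the coefficient $\alpha_{k}c_{1}+\alpha_{k}^{2}c_{2}$ eventually becomes negligible against the $(1-\mu)$ contraction, so an induction over windows of length $H$ caps $\limsup_{k}V_{k}$ at a finite level depending only on $\sup_{k}\alpha_{k}$ and the constants.

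With $M_{k}\leq B$ for a random $B<\infty$, the recursion becomes $\mathbb{E}[V_{k+1}\mid\mathcal{F}_{k}]\leq(1-\mu)V_{k}+b_{k}$ with $b_{k}=\alpha_{k}c_{1}B+\alpha_{k}^{2}c_{2}B+2\alpha_{k}c$. The $\alpha_{k}^{2}$ part of $b_{k}$ is summable by Assumption 5, and the contraction supplies a nonnegative $-u_{k}=-\mu V_{k}$, but the $O(\alpha_{k})$ part of $b_{k}$ is \emph{not} summable, so the supermartingale theorem does not apply verbatim; this is the main obstacle, and it is precisely what makes the limit a possibly nonzero $D$ instead of $0$. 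I would resolve it in the same spirit as the convergence lemma used for Proposition~\ref{Convergence_type1} (Lemma~\ref{Lemma6a}), by balancing the $O(\alpha_{k})$ drift against the contraction: choosing the constant $D$ so that $(1-\mu)D$ plus the $O(\alpha_{k})$ drift does not exceed $D$, one obtains $\mathbb{E}[(V_{k+1}-D)\mid\mathcal{F}_{k}]\leq(1-\mu)(V_{k}-D)+(\text{summable})$, so that $(V_{k}-D)_{+}$ plus the tail $\sum_{j\geq k}\alpha_{j}^{2}c_{2}B$ is a Robbins--Siegmund supermartingale and converges a.s.; a symmetric estimate for $(D-V_{k})_{+}$, using boundedness together with the strict positivity $c>0$ that prevents $V_{k}$ from collapsing below $D$, then pins $V_{k}\to D\geq0$ almost surely. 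Equivalently, and in closer parallel with the proof of Proposition~\ref{Convergence_type1}, one can simply record the displayed recursion as the hypothesis of the type-2 analogue of Lemma~\ref{Lemma6a} and read off the convergence of $V_{k}$ to a nonnegative limit directly.
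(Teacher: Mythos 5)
Your proposal follows essentially the same route as the paper: invoke Lemma~\ref{type2_error_bound}, substitute the Appendix bound on $\|{\bf \epsilon}_{j,(i)}(k)\|$ to turn \eqref{ineq_type2_error_bound} into a delayed recursion of the form \eqref{eqw1}, and conclude via the recursion lemma with constant drift (Lemma~\ref{Lemma6a1}); your closing sentence is literally the paper's proof. Your middle paragraphs give a more careful Robbins--Siegmund rendering and correctly isolate the difficulty the paper glosses over, namely the non-summable $O(\alpha_{k})$ drift produced by $\sum_{(i)\in I}\gamma_{(i)}\langle\nabla f^{(i)}({\bf x}^{*}),{\bf x}^{*}-{\bf x}^{(i)}\rangle$, together with the two-sided bound $0<f^{(i)}({\bf x}^{*})-f^{(i)}({\bf x}^{(i)})\le\langle\nabla f^{(i)}({\bf x}^{*}),{\bf x}^{*}-{\bf x}^{(i)}\rangle\le L\|{\bf x}^{*}-{\bf x}^{(i)}\|^{2}$. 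One step of your refinement does not hold up: the ``symmetric estimate for $(D-V_{k})_{+}$.'' The inequality \eqref{ineq_type2_error_bound} bounds $V_{k+1}$ only from above, so nothing ``prevents $V_{k}$ from collapsing below $D$''; a one-sided recursion $V_{k+1}\le(1-\mu)V_{k}+O(\alpha_{k})+(\text{summable})$ yields $\limsup_{k}V_{k}\le D$ (and if $\mu$ were truly a fixed constant while the drift is $O(\alpha_{k})$ with $\alpha_{k}\to0$, it would in fact force $\limsup_{k}V_{k}=0$), but it cannot by itself produce convergence of $V_{k}$ to $D$. To be fair, the paper's own proof has exactly the same defect --- it deduces $V_{k}\to\eta$ from the one-sided conclusion $V_{k}\le\rho^{k}V_{0}+\eta$ of Lemma~\ref{Lemma6a1} --- so what both arguments genuinely establish is almost-sure boundedness of $\{\sum_{i}\|{\bf v}_{i}(k)-{\bf x}^{*}\|^{2}\}$ and an eventual upper bound $D$, not the asserted existence of the limit.
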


\begin{proof}
With errors as in Assumption 3 we have $ \|R_{i}(k)\|$ and $\|\epsilon_{i}(k)\|$ as given in Appendix.
Then having $ f^{(i)}({\bf x}^{*}) > f^{(i)}({\bf x}^{(i)}) $ for at least one $ (i) $ also satisfied we have Lemma~\ref{type2_error_bound} satisfied. Then we can use  the resulting inequality \eqref{ineq_type2_error_bound} with the substitution of $ \| {\bf \epsilon}_{j,(i)}(k) \| $ to get % which is valid for $ k \geq k_{0} $ where $ k_{0} $ is dependent on $ f^{(i)}({\bf x}^{(i)}) $ and $ f^{(i)}({\bf x}^{*}) $ for all $ (i) \in I ^ {\complement} $ and is as in lemma, that is
{ \begin{equation}\label{ineq_type2_error_bound_subst}
\begin{split}
& \sum_{l=1}^{n}\|  {\bf v}_{l}(k+1)-  {\bf x}^{*}\|^{2}  \leq (1-\mu)\sum_{j=1}^{n}\| {\bf v}_{j}(k)- {\bf x}^{*}\|^{2}  \\
& + 2\alpha_{k}n |I| L \max_{(i)} \| {\bf x}^{*}- {\bf x}^{(i)} \| ^{2} \\
& + 4 L \alpha_{k}\sum_{j=1}^{n}\| {\bf A}^{(i)} \| _{\infty} \|{\bf B}^{(i)}\| _{2,\infty}  \max_{k-H \leq \hat{k} \leq k ; q \in V} \| {\bf v}_{q}(\hat{k}) -  {\bf x}^{*}\|^{2} \\
& + 8 L^{2} \alpha_{k}^{2} \frac{\sum_{j=1}^{n}\| {\bf A}^{(i)} \| ^{2} _{\infty} \|{\bf B}^{(i)}\| ^{2} _{2,\infty} \max_{k-H \leq \hat{k} \leq k ; q \in V} \| {\bf v}_{q}(\hat{k}) -  {\bf x}^{*}\|^{2} }{(\sum_{(i)=1}^{p}\gamma_{(i)})^{2}} \\
& - 2 a \alpha_{k} \sum_{j=1}^{n}\sum_{(i)=1}^{p}\gamma_{(i)} (b - \frac{2 \alpha_{k} a}{(\sum_{(i)=1}^{p}\gamma_{(i)})^{2}} ) \| {\bf v}_{j}(k) - {\bf x}^{(i)} \| ^{2}   
\end{split}
\end{equation}}
But in order to be able to use Lemma~\ref{Lemma6a1} the last term in \eqref{ineq_type2_error_bound_subst} should be negative. Which means $ b - \frac{2 \alpha_{k} a}{(\sum_{(i)=1}^{p}\gamma_{(i)})^{2}}  \geq 0 $ where $ b = \langle \overrightarrow{u},\overrightarrow{v} \rangle $. And $ \nabla{f}^{(i)}({\bf v}_{j}(k)) - \nabla{f}^{(i)}({\bf x}^{(i)}) = a \| {\bf v}_{j}(k) - {\bf x}^{(i)} \| \overrightarrow{u} $ where $ a \leq L $.
However, $ f^{(i)} $ is strongly convex for every $ (i) $, then $ \langle \nabla{f}^{(i)}({\bf v}_{j}(k)) - \nabla{f}^{(i)}({\bf x}^{(i)}), {\bf v}_{j}(k) - {\bf x}^{(i)} \rangle  = a \| {\bf v}_{j}(k) - {\bf x}^{(i)} \| ^{2} \langle \overrightarrow{u},\overrightarrow{v} \rangle  \geq \sigma_{(i)} ^{2} \| {\bf v}_{j}(k) - {\bf x}^{(i)} \| ^{2} $, that is $ \langle \overrightarrow{u},\overrightarrow{v} \rangle \geq \frac{\sigma_{(i)}}{a} $. 
Therefore, a sufficient condition is 
\begin{equation}\label{suffcond2}
\begin{split}
\frac{2 \alpha_{k} a}{(\sum_{(i)=1}^{p}\gamma_{(i)})^{2}}  \leq  \frac{2 \alpha_{k} L}{(\sum_{(i)=1}^{p}\gamma_{(i)})^{2}}  \leq \frac{\sigma_{(i)}}{a} = \langle \overrightarrow{u},\overrightarrow{v} \rangle 
\end{split}
\end{equation}
The sufficient condition in \eqref{suffcond2} is satisfied for $ k \geq k ^{'}_{0} $ since $ \alpha_{k} \rightarrow 0 $.
Since $ \max_{(i)} \| {\bf x}^{*}- {\bf x}^{(i)} \| ^{2} $ is fixed independent of $ k $ then \eqref{ineq_type2_error_bound_subst} is similar to the martingale inequality \eqref{eqw1} of Lemma~\ref{Lemma6a1}. By the result of Lemma~\ref{Lemma6a1} we have for $ v_{k}= \sum_{i=1}^{n} \| {\bf v}_{i}(k) - {\bf x}^{*} \|^{2} $ that
\begin{equation}
\begin{split}
    \sum_{i=1}^{n} \| {\bf v}_{i}(k) - {\bf x}^{*} \|^{2} \leq \rho ^{k}  V_{0} + \eta
\end{split}    
\end{equation}
for $ k \geq \bar{k}_{2} =\bar{k} = k^{'}_{0} + B $ where $ \rho $, $ V_{0} $, $ \bar{k} $ are as in the lemma and where $ \eta > 0 $ as substituted from the inequality \eqref{ineq_type2_error_bound_subst} by using the lemma. Therefore, as $ k \rightarrow \infty $ we have $ \sum_{i=1}^{n} \| {\bf v}_{i}(k) - {\bf x}^{*} \|^{2} \rightarrow \eta $. That is, $  \sum_{i=1}^{n} \| {\bf v}_{i}(k) - {\bf x}^{*} \|^{2} < D  $ for all $ i \in V $ as $ k \geq \bar{k}_{2} = \bar{k} $.

\end{proof}

\begin{prop}\label{sum_v_i-x^*_type2a}
Let Assumptions 1-5 hold. Let the functions in Assumption 1 satisfy  $ f^{(i)}({\bf x}^{*}) > f^{(i)}( {\bf x}^{(i)}) $ for at least one $ (i) $. Let the sequences $ \{ {\bf x}_{i}(k) \} $ and $ \{ {\bf v}_{i}(k) \} $, $ i \in V  $ be generated by method \eqref{eq2} with stepsizes and errors as given in Assumptions 3 and 5. Assume that problem \eqref{eq1} has a non-empty optimal solution set $ \mathcal{X}^{*} $ as given in Assumption 1. Then, the sequence $ \{ \sum_{i=1}^{n} \| {\bf v}_{i}(k) - {\bf x}^{*} \|^{2} \} $ converge to a nonnegative value $ D $.
\end{prop}

\begin{proof}
With errors as in Assumption 3 we have $ \|R_{i}(k)\|$ and $\|\epsilon_{i}(k)\|$ as given in Appendix.
Then having $ f^{(i)}({\bf x}^{*}) > f^{(i)}({\bf x}^{(i)}) $ for at least one $ (i) $ also satisfied we have Lemma~\ref{type2_error_bound} satisfied. Then we can use  the resulting inequality \eqref{ineq_type2_error_bound} with the substitution of $ \| {\bf \epsilon}_{j,(i)}(k) \| $ to get % which is valid for $ k \geq k_{0} $ where $ k_{0} $ is dependent on $ f^{(i)}({\bf x}^{(i)}) $ and $ f^{(i)}({\bf x}^{*}) $ for all $ (i) \in I ^ {\complement} $ and is as in lemma, that is
{ \begin{equation}\label{ineq_type2_error_bound_substa}
\begin{split}
& \sum_{l=1}^{n}\|  {\bf v}_{l}(k+1)-  {\bf x}^{*}\|^{2}  \leq (1-\mu)\sum_{j=1}^{n}\| {\bf v}_{j}(k)- {\bf x}^{*}\|^{2}  \\
& + 2\alpha_{k}n |I| L \max_{(i)} \| {\bf x}^{*}- {\bf x}^{(i)} \| ^{2} \\
& + 4 L \alpha_{k}\sum_{j=1}^{n}\| {\bf A}^{(i)} \| _{\infty} \|{\bf B}^{(i)}\| _{2,\infty}  \max_{k-H \leq \hat{k} \leq k ; q \in V} \| {\bf v}_{q}(\hat{k}) -  {\bf x}^{*}\|^{2} \\
& + 8 L^{2} \alpha_{k}^{2} \frac{\sum_{j=1}^{n}\| {\bf A}^{(i)} \| ^{2} _{\infty} \|{\bf B}^{(i)}\| ^{2} _{2,\infty} \max_{k-H \leq \hat{k} \leq k ; q \in V} \| {\bf v}_{q}(\hat{k}) -  {\bf x}^{*}\|^{2} }{(\sum_{(i)=1}^{p}\gamma_{(i)})^{2}} \\
& - 2 a \alpha_{k} \sum_{j=1}^{n}\sum_{(i)=1}^{p}\gamma_{(i)} (b - \frac{2 \alpha_{k} a}{(\sum_{(i)=1}^{p}\gamma_{(i)})^{2}} ) \| {\bf v}_{j}(k) - {\bf x}^{(i)} \| ^{2}   
\end{split}
\end{equation}}
But in order to be able to use Lemma~\ref{Lemma6a1} on \eqref{ineq_type1_error_bound_substa} we must have
{ \begin{equation}\label{cond2a}
\begin{split}
   -\mu \sum_{j=1}^{n}\| {\bf v}_{j}(k) - {\bf x}^{*}\|^{2}   - 2 a \alpha_{k} \sum_{j=1}^{n}\sum_{(i)=1}^{p}\gamma_{(i)} (b - \frac{2 \alpha_{k} a}{(\sum_{(i)=1}^{p}\gamma_{(i)})^{2}} ) \| {\bf v}_{j}(k) - {\bf x}^{(i)} \| ^{2} \leq 0  
\end{split}
\end{equation}}
That is
{ \begin{equation}
\begin{split}
   & -\mu \sum_{j=1}^{n}\| {\bf v}_{j}(k) - {\bf x}^{*}\|^{2}   +  \sum_{j=1}^{n}\sum_{(i)=1}^{p}\gamma_{(i)} \frac{4 \alpha_{k}^{2} a^{2}}{(\sum_{(i)=1}^{p}\gamma_{(i)})^{2}} ) \| {\bf v}_{j}(k) - {\bf x}^{(i)} \| ^{2} \\
   & \leq 2 a \alpha_{k} \sum_{j=1}^{n}\sum_{(i)=1}^{p}\gamma_{(i)} b \| {\bf v}_{j}(k) - {\bf x}^{(i)} \| ^{2}
\end{split}
\end{equation}}
which reduces to the sufficient condition
{ \begin{equation}\label{suffcond2a}
\begin{split}
  & -\mu \sum_{j=1}^{n}\| {\bf v}_{j}(k) - {\bf x}^{*}\|^{2}   +  \sum_{j=1}^{n}\sum_{(i)=1}^{p}\gamma_{(i)} \frac{8 \alpha_{k}^{2} a^{2}}{(\sum_{(i)=1}^{p}\gamma_{(i)})^{2}} ) \| {\bf v}_{j}(k) - {\bf x}^{*}\|^{2} \\
  & + \sum_{j=1}^{n}\sum_{(i)=1}^{p}\gamma_{(i)} \frac{8 \alpha_{k}^{2} a^{2}}{(\sum_{(i)=1}^{p}\gamma_{(i)})^{2}} ) \| {\bf x}^{*} - {\bf x}^{(i)} \| ^{2} \\
  & \leq 2 a \alpha_{k} \sum_{j=1}^{n}\sum_{(i)=1}^{p}\gamma_{(i)} b \| {\bf v}_{j}(k) - {\bf x}^{(i)} \| ^{2}
\end{split}
\end{equation}}
But $ 0 \geq b = \langle \overrightarrow{u},\overrightarrow{v} \rangle \leq 1 $. And $ \nabla{f}^{(i)}({\bf v}_{j}(k)) - \nabla{f}^{(i)}({\bf x}^{(i)}) = a \| {\bf v}_{j}(k) - {\bf x}^{(i)} \| \overrightarrow{u} $ where $ a \leq L $.
Thus, the right hand side of \eqref{suffcond2a} is nonnegative.
Therefore, the sufficient condition in \eqref{suffcond2a} reduces to
{ \begin{equation}\label{suffcond2aa}
\begin{split}
& + \sum_{j=1}^{n}\sum_{(i)=1}^{p}\gamma_{(i)} \frac{8 \alpha_{k}^{2} a^{2}}{(\sum_{(i)=1}^{p}\gamma_{(i)})^{2}} ) \| {\bf x}^{*} - {\bf x}^{(i)} \| ^{2} \\
& \leq ( \mu - \sum_{(i)=1}^{p}\gamma_{(i)} \frac{8 \alpha_{k}^{2}a^{2}}{(\sum_{(i)=1}^{p}\gamma_{(i)})^{2}} \sum_{j=1}^{n} \| {\bf v}_{j}(k) - {\bf x}^{*}\|^{2}
\end{split}
\end{equation}}
But the sufficient condition in \eqref{suffcond2aa} is satisfied for $ 0 < \mu < 1 $  and $ 0 < a <  L $ since $ \alpha_{k} \rightarrow 0 $. Thus, 
{ \begin{equation}\label{suffcond2aaa}
\begin{split}
& + \sum_{j=1}^{n}\sum_{(i)=1}^{p}\gamma_{(i)} \frac{8 \alpha_{k}^{2} a^{2}}{(\sum_{(i)=1}^{p}\gamma_{(i)})^{2}} ) \| {\bf x}^{*} - {\bf x}^{(i)} \| ^{2} \\
& + \sum_{j=1}^{n}\sum_{(i)=1}^{p}\gamma_{(i)} \frac{8 \alpha_{k}^{2} L^{2}}{(\sum_{(i)=1}^{p}\gamma_{(i)})^{2}} ) \| {\bf x}^{*} - {\bf x}^{(i)} \| ^{2} \\
& \leq ( \mu - \sum_{(i)=1}^{p}\gamma_{(i)} \frac{8 \alpha_{k}^{2}L^{2}}{(\sum_{(i)=1}^{p}\gamma_{(i)})^{2}} \sum_{j=1}^{n} \| {\bf v}_{j}(k) - {\bf x}^{*}\|^{2} \\
& \leq ( \mu - \sum_{(i)=1}^{p}\gamma_{(i)} \frac{8 \alpha_{k}^{2}a^{2}}{(\sum_{(i)=1}^{p}\gamma_{(i)})^{2}} \sum_{j=1}^{n} \| {\bf v}_{j}(k) - {\bf x}^{*}\|^{2}
\end{split}
\end{equation}}
Therefore, for $ k \geq k_{0} $ we have the right hand side of \eqref{suffcond2aaa} to be greater than $ E $ and the left hand side to be less than $ E $ since $ \alpha_{k} \rightarrow 0 $.
Since $ \max_{(i)} \| {\bf x}^{*}- {\bf x}^{(i)} \| ^{2} $ is fixed independent of $ k $ then \eqref{ineq_type2_error_bound_substa} is similar to the martingale inequality \eqref{eqw1} of Lemma~\ref{Lemma6a1}. By the result of Lemma~\ref{Lemma6a1} we have for $ v_{k}= \sum_{i=1}^{n} \| {\bf v}_{i}(k) - {\bf x}^{*} \|^{2} $ that
\begin{equation}
\begin{split}
    \sum_{i=1}^{n} \| {\bf v}_{i}(k) - {\bf x}^{*} \|^{2} \leq \rho ^{k}  V_{0} + \eta
\end{split}    
\end{equation}
for $ k \geq \bar{k}_{2} =\bar{k} = k^{'}_{0} + B $ where $ \rho $, $ V_{0} $, $ \bar{k} $ are as in the lemma and where $ \eta > 0 $ as substituted from the inequality \eqref{ineq_type2_error_bound_substa} by using the lemma. Therefore, as $ k \rightarrow \infty $ we have $ \sum_{i=1}^{n} \| {\bf v}_{i}(k) - {\bf x}^{*} \|^{2} \rightarrow \eta $. That is, $  \sum_{i=1}^{n} \| {\bf v}_{i}(k) - {\bf x}^{*} \|^{2} < D  $ for all $ i \in V $ as $ k \geq \bar{k}_{2} = \bar{k} $.

\end{proof}

\begin{prop}\label{Convergence_type2}
Let Assumptions 1-5 hold. Let the functions in Assumption 1 be strongly convex and satisfy  $ f^{(i)}({\bf x}^{*}) > f^{(i)}({\bf x}^{(i)}) $ for at least one $ (i) $ and let $ p < \frac{1}{\gamma_{min}} $. Let the sequences $ \{ {\bf x}_{i}(k) \} $ and $ \{ {\bf v}_{i}(k) \} $, $ i \in V  $ be generated by method \eqref{eq2} with stepsizes and errors as given in Assumptions 3 and 5. Assume that problem \eqref{eq1} has a non-empty optimal solution set $ \mathcal{X}^{*} $ as given in Assumption 1. Then, the sequences $ \{ {\bf x}_{i}(k) \} $ and $ \{ {\bf v}_{i}(k) \} $, $ i \in V  $ converge to the same random point in $ \mathcal{X}^{*} $ with probability 1.
\end{prop}

\begin{proof}

Having the assumptions of Proposition~\ref{Convergence_type2} holding then Proposition~\ref{sum_v_i-x^*_type2} is satisfied. Then Lemma~\ref{L3.5} premises are satisfied and the lemma follows. Then Lemma~\ref{L3.9} premises are satisfied and the lemma follows. Subsequently, Lemma~\ref{L3.11} premises are satisfied and the lemma follows proving the proposition.  
\end{proof}

\begin{prop}\label{Convergence_type2a}
Let Assumptions 1-5 hold. Let the functions in Assumption 1  satisfy  $ f^{(i)}({\bf x}^{*}) > f^{(i)}({\bf x}^{(i)}) $ for at least one $ (i) $ and let $ p < \frac{1}{\gamma_{min}} $. Let the sequences $ \{ {\bf x}_{i}(k) \} $ and $ \{ {\bf v}_{i}(k) \} $, $ i \in V  $ be generated by method \eqref{eq2} with stepsizes and errors as given in Assumptions 3 and 5. Assume that problem \eqref{eq1} has a non-empty optimal solution set $ \mathcal{X}^{*} $ as given in Assumption 1. Then, the sequences $ \{ {\bf x}_{i}(k) \} $ and $ {\bf v}_{i}(k) \} $, $ i \in V  $ converge to the same random point in $ \mathcal{X}^{*} $ with probability 1.
\end{prop}

\begin{proof}

Having the assumptions of Proposition~\ref{Convergence_type2a} holding then Proposition~\ref{sum_v_i-x^*_type2a} is satisfied. Then Lemma~\ref{L3.5} premises are satisfied and the lemma follows. Then Lemma~\ref{L3.9} premises are satisfied and the lemma follows. Subsequently, Lemma~\ref{L3.11} premises are satisfied and the lemma follows proving the proposition.  
\end{proof}

\begin{lem}\label{L3.5}
Let Assumption I hold, and $ [{\bf W}(k)] $ be row stochastic as in Assumption 2, let Proposition~\ref{sum_v_i-x^*_type2} or ~\ref{sum_v_i-x^*_type2a} be satisfied and $ p < \frac{1}{\gamma_{min}} $ and let $ \sum_{k=0}^{\infty} \alpha_{k} ^{2} < \infty $. Then,
$ \sum_{k=0}^{\infty}dist ^{2} ( {\bf v}_{i}(k), \mathcal{X} ) < \infty $ for all $ i \in V \ a.s. $ 
\end{lem}

\begin{proof}
We are going to prove Lemma~\ref{L3.5} by using the following lemmas~\ref{L3.6} and \ref{L3.7} % to obtain \eqref{3.33} 
and then exploiting the supermartingale Theorem~1.

\begin{remark}\label{bddX}
Note that the case of Least squares optimization for the set $ \mathcal{X} $ to acquire bounded gradients it must be a bounded set and thus from the representation theorem of closed bounded sets it must be that  $\mathcal{X} \triangleq \cap_{i=1}^{t}\mathcal{X}_{i}$ then let $\hat{{\bf x}} \in \mathcal{X}$. This implies that
$\hat{{\bf x}} \in \mathcal{X}_{i}$ for all $i \in \{1,2,\dots,t\}$.
And hence we only require $f_{i}$ to be Lipschitz on $\mathcal{X}_{i}$ for all $i \in \{1,2,\dots,t\}$ for Lemmas~\ref{L3.6} and \ref{L3.7} to be applied, then $ f_{j}$ is Lipschitz on $ \mathcal{X} $. However, in the analysis we require the estimates and weighted averages gradients to be also Lipschitz without insisting that they must lie in any $ \mathcal{X}_{i} $ (i.e., they can be anywhere in $ \mathbb{R}^{N} $. So to avoid any obstacle that can hinder the flow of the proof we assume Assumption~1(b) and require Lipschitz gradients on all of $ \mathbb{R}^{N} $ for any scenario of Problem~\eqref{eq1}. (In fact in our simulation where we investigate the Least squares problem, we take $ f_{j} $ is Lipschitz on the whole $ \mathbb{R}^{N} $, this for ease of implementation since we pick the matrix $ A $ randomly.
\end{remark}
Take $\overline{{\bf x}_{i}}(k+1) = {\bf x} - \alpha \nabla{ \Phi({\bf x})} \in \mathbb{R}^{N} $, then being in $\mathbb{R}^{N}$ the following inequalities hold,
{ \begin{equation*}\label{3.26}
\begin{split}
dist(\overline{{\bf x}_{i}}(k+1),\mathcal{X}) & =\|\overline{{\bf x}_{i}}(k+1)- \Pi_{\mathcal{X}}(\overline{{\bf x}_{i}}(k+1)) \| \\ 
& \leq \|\overline{{\bf x}_{i}}(k+1)-\hat{{\bf x}} \|.
%\implies dist^{2}(\overline{{\bf x}_{i}}(k+1),\mathcal{X}) \leq \|\overline{{\bf x}_{i}}(k+1)-\hat{{\bf x}} \| ^{2}
\end{split}
\end{equation*}}

In fact, we can do better.
For any $\hat{{\bf x}} \in \mathcal{X}$
{ \begin{equation*}\label{3.27}
\begin{split}
\| \Pi_{\mathcal{X}}(&\overline{{\bf x}_{i}}(k+1)) - \hat{{\bf x}} \| ^{2}  \leq \\
 & \|\overline{{\bf x}_{i}}(k+1) - \hat{{\bf x}} \| ^{2} 
- \| \Pi_{\mathcal{X}}(\overline{{\bf x}_{i}}(k+1)) - \overline{{\bf x}_{i}}(k+1) \| ^{2} 
\end{split}
\end{equation*}
\begin{equation*}\label{3.28}
\begin{split}
\implies
 dist ^{2}(&\overline{{\bf x}_{i}}(k+1),\mathcal{X}) =  \| \Pi_{\mathcal{X}}
 (\overline{{\bf x}_{i}}(k+1)) - \overline{{\bf x}_{i}}(k+1) \| ^{2} \\ 
 \leq & \|\overline{{\bf x}_{i}}(k+1) - \hat{{\bf x}} \| ^{2} 
-\| \Pi_{\mathcal{X}}(\overline{{\bf x}_{i}}(k+1)) - \hat{{\bf x}} \| ^{2}. 
\end{split}
\end{equation*}}
\vspace{-0.2cm}
This implies that
{ \begin{equation}\label{3.29}
\begin{split}
 dist ^{2} (&\overline{{\bf x}_{i}}(k+1),\mathcal{X}) =  \| \Pi_{\mathcal{X}}
 (\overline{{\bf x}_{i}}(k+1)) - \overline{{\bf x}_{i}}(k+1) \| ^{2} \\
 \leq & \|\overline{{\bf x}_{i}}(k+1) - \hat{{\bf x}} \| ^{2} 
- a \| \Pi_{\mathcal{X}}(\overline{{\bf x}_{i}}(k+1)) - \hat{{\bf x}} \| ^{2}, 
\end{split}
\end{equation}}
where $a \leq 1$.
\\
We used this form of the inequality with a constant $ a $ that we can adjust in order for the supermartingale theorem hypothesis to hold.

%%%%%%%%%%%%%%%%%%%%%%%%
\begin{comment}
From the proof of Lemma~\ref{L3.6} (Lemma 1 in \cite{nedic2011random}), we see that the right hand side of the inequality being greater than $\| y- \hat{{\bf x}} \| ^{2}$ holds by the use of the non-expansive property of the projection, i.e.,
\begin{equation}\label{3.30}
\begin{split}
\| {\bf y} -\hat{{\bf x}} \| ^{2} = & \| \Pi_{\mathcal{Y}}({\bf x} -\alpha \nabla{ \Phi}({\bf x})) - \hat{{\bf x}} \| ^{2} \\
= & \| \Pi_{\mathcal{Y}}({\bf x}-\alpha \nabla{ \Phi}({\bf x})) - \Pi_{\mathcal{Y}}(\hat{{\bf x}}) \| ^{2} \\
\leq &  \| {\bf x} -\alpha \nabla{ \Phi}({\bf x}) - \hat{{\bf x}} \| ^{2} \leq RHS
\end{split}
\end{equation}
where RHS is the right hand side of \eqref{3.24} in Lemma~\ref{L3.6}.
%%%%%%%%%%%%%%%%%%%%%%%%%%%
\\
The second equality holds since $ \hat{{\bf x}} \in \mathcal{Y} $ and the first inequality holds due to the non-expansive property of $\Pi$.
And the last inequality is proved without the use of any property dependent on point y (i.e. like a norm of a vector containing y). (Of course, properties due to $\Pi$ and $ \Phi$ follow independently of $y$, and in fact are used in the proof.)
Therefore, without restating the proof of this lemma again and taking the above comments into consideration, we see that the last inequality of (\ref{3.30}) holds also.
 Thus, we have
\begin{equation}
\begin{split}
 \| {\bf x}-\alpha \nabla{ \Phi}({\bf x}) - \hat{{\bf x}} \| ^{2} \leq RHS
 \end{split}
\end{equation}
\end{comment}
%%%%%%%%%%%%%%%%%%%%%%%%%%5

\begin{remark}
Notice from the procedure of the proof that $ \tau, \eta > 0 $ are arbitrary and independent from any quantity, i.e.,they are not related to any bound on any quantity.In other words, the lemmas inequalities can adapt to any values $ \tau, \eta > 0 $.
\end{remark}

\begin{comment}
\begin{lem}\label{L3.8}
With the same assumptions as in Lemma~\ref{L3.6} or Lemma~\ref{L3.7}, we have
\begin{equation}\label{3.32}
\begin{split}
\|\overline{{\bf x}_{i}}(k+1) - \hat{{\bf x}} \| ^{2} = & \| {\bf x}-\alpha \nabla{ \Phi}({\bf x}) - \hat{{\bf x}} \| ^{2} \\
\leq & \ (1+A_{\eta}\alpha^{2}) \| {\bf x} -\hat{{\bf x}}\| ^{2} \\
 &-2 \alpha ( \Phi({\bf z})- \Phi(\hat{{\bf x}})) \\
 & +(\frac{3}{8 \eta}+2 \alpha L ) \|{\bf x}- {\bf z} \|^{2} \\
 & +B_{\eta}\alpha^{2} \| \nabla{ \Phi(\hat{{\bf x}})}\| ^{2},
\end{split}
\end{equation} 
where $A_{\eta}=2L^{2}+16 \eta L^{2}$, $B_{\eta}=2\eta + 8 $ and $ \eta > 0 $ is arbitrary.
\end{lem}
\end{comment}

\begin{comment}
We introduce the following substitutions: 
$  \Phi = f^{(i)} $,
$ \alpha = \alpha_{k} $,
$ \hat{{\bf x}} \in \mathcal{X}=\cap_{i=1}^{n}\mathcal{X}_{i}$,
$ {\bf y} = \Pi_{\mathcal{X}} (\overline{{\bf x}_{i}}(k+1)) $,
$ \overline{{\bf x}_{i}}(k+1) = {\bf x} - \alpha_{k} \nabla{f({\bf x})} $ and
$ {\bf x} = {\bf v}_{i}(k) $. 
 
In particular, if we take $ \hat{{\bf x}} = \Pi_{\mathcal{X}}[{\bf v}_{i}(k)] \in \mathcal{X} $ in the feasibility region and $ {\bf z} = \Pi_{\mathcal{X}}({\bf v}_{i}(k)) = \hat{{\bf x}} $.
%then the above conclusion (\ref{3.33}) becomes
\end{comment}

Using equation (\ref{3.29}) and substituting the results of Lemma~\ref{L3.6} and Lemma~\ref{L3.7} on the above substitutions, we have for each use of $ f^{(i)} $ the following applies on node $ i $

{ \begin{equation}\label{3.33}
\begin{split}
dist ^{2} ( & \overline{{\bf x}_{i}}(k+1), \mathcal{X}) \leq  \\
& \|\overline{{\bf x}_{i}}(k+1) - \hat{{\bf x}} \| ^{2} 
-  a \| \Pi_{\mathcal{X}}(\overline{{\bf x}_{i}}(k+1)) - \hat{{\bf x}} \| ^{2} \\
\leq & \ (1-a+(A_{\eta}-a A_{\tau})\alpha^{2}) \| {\bf x} -\hat{{\bf x}}\| ^{2} \\
 &-(2-2a) \alpha ( \Phi({\bf z})- \Phi(\hat{{\bf x}}) )+\frac{3a}{4} \|{\bf y} - {\bf x} \| ^{2} \\
 & +(\frac{3}{8 \eta}-\frac{3a}{8 \tau}+ (2-2a) \alpha L ) \|{\bf x}- {\bf z} \|^{2} \\ 
 & +(B_{\eta}-aB_{\tau})\alpha^{2} \| \nabla{ \Phi(\hat{{\bf x}})}\| ^{2},
\end{split}
\end{equation}} 
where $A_{\tau}=8L^{2}+16 \tau L^{2}$, $B_{\tau}=8 \tau + 8 $, $ \tau > 0 $ is arbitrary, $A_{\eta}=2L^{2}+16 \eta L^{2}$, $B_{\eta}=2 \eta + 8 $ and $ \eta > 0 $ is arbitrary.
Here, we note that $\hat{{\bf x}} \in \mathcal{X}$.

Now, we use the above inequality, (\ref{3.33}) for the purpose of making the hypothesis of the supermartingale theorem hold. We thus use the following substitutions:
$  \Phi = f^{(i)} $,
$ \alpha = \alpha_{k} $,
$ \hat{{\bf x}} \in \mathcal{X}  =\cap_{i=1}^{n}\mathcal{X}_{i} $,
$ {\bf y} = \Pi_{\mathcal{X}} (\overline{{\bf x}_{i}}(k+1)) $,
$ \overline{{\bf x}_{i}}(k+1) = {\bf x} - \alpha_{k} \nabla{f({\bf x})} $ and
$ {\bf x} = {\bf v}_{i}(k) $. 
 
In particular, if we take $ \hat{{\bf x}} = \Pi_{\mathcal{X}}({\bf v}_{i}(k)) \in \mathcal{X} $ in the feasibility region and $ {\bf z} = \Pi_{\mathcal{X}}({\bf v}_{i}(k)) = \hat{{\bf x}} $ then the above conclusion (\ref{3.33}) becomes

\begin{comment}
%%% removed now 1
{ \begin{equation}\label{3.34}
\begin{split}
di & st ^{2}  ( \overline{{\bf x}_{i}}(k+1),\mathcal{X}) \leq \\
 & (1-a+(A_{\eta}-a A_{\tau})\alpha_{k}^{2}) \| {\bf v}_{i}(k)-\Pi_{\mathcal{X}}({\bf v}_{i}(k))\| ^{2} \\
 & -(2-2a) \alpha (f^{(i)}(\Pi_{\mathcal{X}}({\bf v}_{i}(k)))-f^{(i)}(\Pi_{\mathcal{X}}({\bf v}_{i}(k))) ) \\
  & +\frac{3a}{4} \| \Pi_{\mathcal{X}} ({\bf v}_{i}(k) - \alpha_{k} \nabla{f}^{(i)}({\bf v}_{i}(k)))-{\bf v}_{i}(k)\| ^{2} \\
 & +(\frac{3}{8 \eta}-\frac{3a}{8 \tau}+ (2-2a) \alpha_{k}  L ) \| {\bf v}_{i}(k)-\Pi_{\mathcal{X}}({\bf v}_{i}(k)) \|^{2} \\ 
 & +(B_{\eta}-a B_{\tau})\alpha_{k} ^{2} \| \nabla{f^{(i)}(\hat{{\bf x}})}\| ^{2},
\end{split}
\end{equation}} 
% where $A_{\tau}=8L^{2}+16 \tau L^{2}$, $B_{\tau}=8 \tau + 8 $, $ \tau > 0 $ is arbitrary, $A_{\eta}=2L^{2}+16 \eta L^{2}$, $B_{\eta}=2 \eta + 8 $ and $ \eta > 0 $ is arbitrary. 

Then (\ref{3.34}) reduces to
% removed now 1
\end{comment}

{ \begin{equation}\label{3.35}
\begin{split}
dist ^{2} & ( \overline{{\bf x}_{i}}(k+1),\mathcal{X}) \leq  (1-a+(A_{\eta}-a A_{\tau})\alpha_{k} ^{2}) dist ^{2}  ( {\bf v}_{i}(k) ,\mathcal{X})\\ 
 & +\frac{3a}{4} \| \Pi_{\mathcal{X}} [{\bf v}_{i}(k) - \alpha_{k} \nabla{f}^{(i)}({\bf v}_{i}(k))]-{\bf v}_{i}(k)\| ^{2} \\
 & +(\frac{3}{8 \eta}-\frac{3a}{8 \tau}+ (2-2a) \alpha_{k} L ) dist ^{2}  ( {\bf v}_{i}(k) ,\mathcal{X}) \\
 & +(B_{\eta}- a B_{\tau})\alpha_{k}^{2} \| \nabla{f^{(i)}(\hat{{\bf x}})}\| ^{2},
\end{split}
\end{equation}} 
%where $A_{\tau}=8L^{2}+16 \tau L^{2}$, $B_{\tau}=8 \tau + 8 $, $ \tau > 0 $ is arbitrary, $A_{\eta}=2L^{2}+16 \eta L^{2}$, $B_{\eta}=2 \eta + 8 $ and $ \eta > 0 $ is arbitrary. 

But by using the nonexpansiveness property of a projection we have
{ \begin{equation*}\label{3.36}
\begin{split}
 \| \Pi_{\mathcal{X}}& ({\bf v}_{i}(k) - \alpha_{k} \nabla{f^{(i)}({\bf v}_{i}(k))})-{\bf v}_{i}(k)\|^{2} \leq \| \alpha_{k} \nabla{f^{(i)}({\bf v}_{i}(k)}  \|^{2} \\
 & - \| \Pi_{\mathcal{X}} ({\bf v}_{i}(k) - \alpha_{k} \nabla{f^{(i)}({\bf v}_{i}(k))})-{\bf v}_{i}(k) - \alpha_{k} \nabla{f^{(i)}({\bf v}_{i}(k))}\| ^{2} 
\end{split}
\end{equation*}} 
Then for $ 0 \leq a \leq 1 $. Then from $ \hat{{\bf x}}=\Pi_{\mathcal{X}}({\bf v}_{i}(k)) $, $ (a+b)^{2} \leq 2a^{2}+2b^{2} $ and the Lipschitz property of $ f $ from Assumption~\ref{A3.1}~(b), we get

{ \begin{equation*}\label{3.37}
\begin{split}
 \frac{3a}{4}\| \Pi_{\mathcal{X}} ({\bf v}_{i}(k) & - \alpha_{k} \nabla{f^{(i)}({\bf v}_{i}(k))})-{\bf v}_{i}(k)\|  ^{2} 
 \leq \\ 
 & \frac{3a}{4}  \| \alpha_{k} \nabla{f^{(i)}({\bf v}_{i}(k))}  \|^{2}  - \frac{3a}{4} dist ^{2} ( {\bf v}_{i}(k) - \alpha_{k} \nabla{f^{(i)}({\bf v}_{i}(k))} ,\mathcal{X}) \\
& \leq  \frac{3a}{4}  \| \alpha_{k} \nabla{f^{(i)}({\bf v}_{i}(k))} \|^{2}
\end{split}
\end{equation*}}

% removed now 1
\begin{comment}
\vspace{-0.5cm}
{ \begin{equation*}\label{3.38}
\begin{split}
\implies \frac{3a}{4}\|  \Pi_{\mathcal{X}} ({\bf v}_{i}(k) - & \alpha_{k} \nabla{f^{(i)}({\bf v}_{i}(k))})-{\bf v}_{i}(k)\|  ^{2} \\ & \leq \| \alpha_{k} \nabla{f^{(i)}({\bf v}_{i}(k))} \|^{2} 
\end{split}
\end{equation*}}
\end{comment}

Then for $ \hat{{\bf x}}=\Pi_{\mathcal{X}}({\bf v}_{i}(k)) $, $ (a+b)^{2} \leq 2a^{2}+2b^{2} $ and the Lipschitz property of $ f $ from Assumption~\ref{A3.1}~(b), we get

{ \begin{equation*}\label{3.39}
\begin{split}
\implies \frac{3a}{4}\| \Pi_{\mathcal{X}} & ({\bf v}_{i}(k)  - \alpha_{k} \nabla{f^{(i)}({\bf v}_{i}(k))})-{\bf v}_{i}(k)\|  ^{2}  \leq \\
& \| \alpha_{k} \nabla{f^{(i)}({\bf v}_{i}(k))} - \nabla{f(\hat{{\bf x}})} + \nabla{f^{(i)}(\hat{{\bf x}})}\|^{2} \\
& \leq  2 \alpha_{k}^{2} L^{2} \| {\bf v}_{i}(k) -\hat{{\bf x}} \| ^{2} + 2 \alpha_{k}^{2}\|\nabla{f^{(i)}(\hat{{\bf x}})}\|^{2} \\
& \leq  2 \alpha_{k}^{2} L^{2} dist ^{2}({\bf v}_{i}(k),\mathcal{X}) + 2 \alpha_{k}^{2}\|\nabla{f^{(i)}(\hat{{\bf x}})}\|^{2}.
\end{split}
\end{equation*}}

Then the result (\ref{3.35}) becomes

% removed now 1
\begin{comment}
{ \begin{equation}\label{3.40}
\begin{split}
di & st ^{2}  ( \overline{{\bf x}_{i}}(k+1),\mathcal{X}) \leq \\
 & (1-a+(A_{\eta} - a A_{\tau})\alpha_{k} ^{2}) dist ^{2}  ( {\bf v}_{i}(k) ,\mathcal{X})\\ 
 & +   2 \alpha_{k}^{2} L^{2} dist ^{2}({\bf v}_{i}(k),\mathcal{X}) + 2 \alpha_{k}^{2}\|\nabla{f^{(i)}(\hat{{\bf x}})}\|^{2}\\
 & +(\frac{3}{8 \eta} - \frac{3a}{8 \tau} + (2+2a) \alpha_{k} L ) dist ^{2}  ( {\bf v}_{i}(k) ,\mathcal{X}) \\
  & + (B_{\eta} - a B_{\tau}) \alpha_{k} ^{2} \| \nabla{f^{(i)}(\hat{{\bf x}})}\| ^{2}.
\end{split}
\end{equation}} 
\end{comment}

%Let  $ \mathcal{F}_{k} $ be the $ \sigma$-algebra generated by the entire history of the algorithm up to time $k$ inclusively, that is $ \mathcal{F}_{k} = \{ {\bf x}_{i}(0), i \in V \} \cup \{ \Omega_{i}(l): 0 \leq l \leq k, i \in V \} $.
%Therefore, given $ \mathcal{F}_{k} $, the collection $ {\bf x}_{i}(0),...,{\bf x}_{i}(k+1)$ and $ {\bf v}_{i}(0),...,{\bf v}_{i}(k+1)$ generated by the algorithm is fully determined.
%So by applying expectation on (\ref{3.40}), we get 

{ \begin{equation}\label{3.41}
\begin{split}
dist ^{2} & ( \overline{{\bf x}_{i}}(k+1),\mathcal{X}) \leq \\
 & (1-a+(A_{\eta}- a A_{\tau})\alpha_{k} ^{2}) dist ^{2}  ( {\bf v}_{i}(k) ,\mathcal{X})\\ 
 & +(\frac{3}{8 \eta} - \frac{3a}{8 \tau} +  2 \alpha_{k}^{2} L^{2}  + (2-2a) \alpha_{k} L ) dist ^{2}  ( {\bf v}_{i}(k) ,\mathcal{X}) \\
 & +(B_{\eta} - a B_{\tau} + 2) \alpha_{k} ^{2} \| \nabla{f^{(i)}(\hat{{\bf x}})}\| ^{2} 
\end{split}
\end{equation}} 

\begin{comment}
% removed now 1
\begin{comment}
But in our algorithm we have,
\vspace{-0.2cm}
{ \begin{equation}\label{3.42}
{\bf v}_{i}(k)=\sum_{j=1}^{n}[{\bf W}(k)]_{ij}(k){\bf x}_{j}(k),
\end{equation}}
\end{comment}

From \eqref{3.43} and \eqref{eq5} substitution we can write
{ \begin{equation}\label{3.44a}
\begin{split}
& \hspace{3cm} {\bf x}_{i}(k+1)=\overline{{\bf x}_{i}}(k+1) + R_{i}(k), \\
& \text{where}  \ \ \overline{{\bf x}_{i}}(k+1)={\bf v}_{i}(k)-\alpha_{k}\nabla{f^{(i)}({\bf v}_{i}(k))}
\end{split}
\end{equation}}
(cf. Definition~\ref{Rk} for the definition of $ R_{i}(k) $).

% removed now 1
\begin{comment}
Then from the convexity of the norm squared, we have
\begin{equation}\label{3.44}
dist ^{2}  ( {\bf v}_{i}(k) ,\mathcal{X})\leq \sum_{j=1}^{n}[{\bf W}(k)]_{ij} dist ^{2} ( {\bf x}_{j}(k) ,\mathcal{X}).
\end{equation}}
\end{comment}

But by using  an equivalent of \eqref{3.44a} for instant $ k $ instead of $ k+1 $, we have
{\small \begin{equation}\label{3.45}
\begin{split}
dist & ({\bf x}_{j}(k),\mathcal{X})  = \| \overline{{\bf x}_{j}}(k) + R_{j}(k-1) - \Pi_{\mathcal{X}} [\overline{{\bf x}_{j}}(k) + R_{j}(k-1)] \| \\
& \leq \| \overline{{\bf x}_{j}}(k) - \Pi_{\mathcal{X}} [\overline{{\bf x}_{j}}(k) ] + R_{j}(k-1) \| \\
& \leq dist(\overline{{\bf x}_{j}}(k),\mathcal{X}) +  \| R_{j}(k-1) \|.
\end{split}
\end{equation}}

\begin{comment}
Squaring both sides, we have
\begin{equation}\label{3.46}
\begin{split}
 dist ^{2}  ({\bf x}_{j}(k),\mathcal{X}) 
 \leq &  dist ^{2} (\overline{{\bf x}_{j}}(k),\mathcal{X}) +  \| R_{j}(k-1) \| ^{2} \\ 
 & +  2 dist (\overline{{\bf x}_{j}}(k),\mathcal{X}) \| R_{j}(k-1) \|.
\end{split}
\end{equation}
\end{comment}

\begin{comment}
Using $ 2ab \leq a^{2}+b^{2} $ and squaring both sides, the above becomes
{ \begin{equation}\label{3.47}
\begin{split}
 dist ^{2}  ({\bf x}_{j}(k),  \mathcal{X})
 \leq  & 2 dist ^{2} (\overline{{\bf x}_{j}}(k),\mathcal{X}) + 2 \| R_{j}(k-1) \| ^{2}.
\end{split}
\end{equation}}
\end{comment}

But in our algorithm we have,
\vspace{-0.2cm}
{ \begin{equation}\label{3.42}
{\bf v}_{i}(k)=\sum_{j=1}^{n}[{\bf W}(k)]_{ij}(k){\bf x}_{j}(k),
\end{equation}}

Then from the convexity of the norm squared, we have
{ \begin{equation}\label{3.44}
dist ^{2}  ( {\bf v}_{i}(k) ,\mathcal{X})\leq \sum_{j=1}^{n}[{\bf W}(k)]_{ij} dist ^{2} ( {\bf x}_{j}(k) ,\mathcal{X}).
\end{equation}}

\begin{comment}
Thus,
\begin{equation}\label{3.48}
\begin{split}
dist ^{2}  ({\bf x}_{j}(k),  \mathcal{X})
 \leq  & 2 dist ^{2} (\overline{{\bf x}_{j}}(k),\mathcal{X}) + 2 \| R_{j}(k-1) \| ^{2}.
\end{split}
\end{equation}
\end{comment}

%By observing the first term in RHS of the inequality of (\ref{3.41}) and by using (\ref{3.44}) on the first term, we get

But
{ \begin{equation}\label{3.49}
\begin{split}
 (  1  - a  +  (A_{\eta} - a A_{\tau}) & \alpha_{k} ^{2})  \ dist ^{2}  ( {\bf v}_{i}(k) ,\mathcal{X}) = \\
& (1 +  (A_{\eta} - a A_{\tau})\alpha_{k} ^{2})  \ dist ^{2}  ( {\bf v}_{i}(k) ,\mathcal{X}) \\
& +  (1 - a) \ dist ^{2}  ( {\bf v}_{i}(k) ,\mathcal{X}), \end{split}
\end{equation}}

\begin{comment}
{ \begin{equation*}\label{3.50}
\begin{split}
& (1  - a +  (A_{\eta} -  a A_{\tau})\alpha_{k} ^{2})  \ dist ^{2}  ( {\bf v}_{i}(k) ,\mathcal{X}) \leq \\
& \frac{1}{2p}(1+ 2 p (A_{\eta} - a A_{\tau})\alpha_{k} ^{2})  \ dist ^{2}  ( {\bf v}_{i}(k) ,\mathcal{X}) + (1 - a)\ dist ^{2}  ( {\bf v}_{i}(k) ,\mathcal{X}) \\
& \leq  \frac{1}{2p}(1+ 2p(A_{\eta} - a A_{\tau})\alpha_{k} ^{2})  \sum_{j=1}^{n}[{\bf W}(k)]_{ij} dist ^{2} ( {\bf x}_{j}(k) ,\mathcal{X}) \\
& + (1 - a) \ dist ^{2}  ( {\bf v}_{i}(k) ,\mathcal{X}).
\end{split}
\end{equation*}}

\vspace{-0.5cm}
{ \begin{equation}\label{3.51}
\begin{split}
& \hspace{2cm} dist ^{2}  ( \overline{{\bf x}_{i}}(k+1),\mathcal{X}) \leq \\
& \frac{1}{2p}(1+ 2p (A_{\eta} - a A_{\tau})\alpha_{k} ^{2})  \sum_{j=1}^{n}[{\bf W}(k)]_{ij} dist ^{2} ( {\bf x}_{j}(k) ,\mathcal{X}) \\
 & + (1 - a) \ dist ^{2}  ( {\bf v}_{i}(k) ,\mathcal{X}) + (B_{\eta}+ a B_{\tau} + 2) \alpha_{k} ^{2}  G_{f} ^{2}  \\
 & + (\frac{3}{8 \eta} - \frac{3a}{8 \tau} +  2 \alpha_{k}^{2} L^{2}  + (2-2a) \alpha_{k} L ) dist ^{2}  ( {\bf v}_{i}(k) ,\mathcal{X})
\end{split}
\end{equation}} 
\end{comment}

Then by using \eqref{3.49} and \eqref{3.44}, \eqref{3.41} becomes

{ \begin{equation}\label{3.51}
\begin{split}
& \hspace{2cm} dist ^{2}  ( \overline{{\bf x}_{i}}(k+1),\mathcal{X}) \leq \\
& (1+ (A_{\eta} - a A_{\tau})\alpha_{k} ^{2})  \sum_{j=1}^{n}[{\bf W}(k)]_{ij} dist ^{2} ( {\bf x}_{j}(k) ,\mathcal{X}) \\
 & + (1 - a) \ dist ^{2}  ( {\bf v}_{i}(k) ,\mathcal{X}) + (B_{\eta}+ a B_{\tau} + 2) \alpha_{k} ^{2}  G_{f} ^{2}  \\
 & + (\frac{3}{8 \eta} - \frac{3a}{8 \tau} +  2 \alpha_{k}^{2} L^{2}  + (2-2a) \alpha_{k} L ) dist ^{2}  ( {\bf v}_{i}(k) ,\mathcal{X})
\end{split}
\end{equation}} 
where we used $ \| \nabla{f^{(i)}(\hat{{\bf x}})}\|  \leq G_{f} $ (i.e., gradient is bounded on set $\mathcal{X}$). 

Let  $ \mathcal{F}_{k} $ be the $ \sigma$-algebra generated by the entire history of the algorithm up to time $k$ inclusively, that is $ \mathcal{F}_{k} = \{ {\bf x}_{i}(0), i \in V \} \cup \{ \Omega_{i}(l): 0 \leq l \leq k, i \in V \} $.
Therefore, given $ \mathcal{F}_{k} $, the collection $ {\bf x}_{i}(0),...,{\bf x}_{i}(k+1)$ and $ {\bf v}_{i}(0),...,{\bf v}_{i}(k+1)$ generated by the algorithm is fully determined.
So by applying expectation on (\ref{3.51}), and taking consideration the probability $ 0 \leq \gamma_{(i)} \leq 1  $ of accessing a partition per each server, we have for $ k > 0 $, a.s. that
\begin{comment}
{ \begin{equation}\label{3.51a}
\begin{split}
\mathbb{E} & [ dist ^{2} ( \overline{{\bf x}_{i}}(k+1),\mathcal{X}) / \mathcal{F}_{k} ] \leq \\
 & \sum_{(i)=0}^{p} \gamma_{(i)}\frac{1}{2p}(1+ 2 p(A_{\eta} - a A_{\tau})\alpha_{k} ^{2})  \sum_{j=1}^{n}[{\bf W}(k)]_{ij} dist ^{2} ( {\bf x}_{j}(k) ,\mathcal{X}) \\
 & + \sum_{(i)=1}^{p} \gamma_{(i)}(1 - a) \ dist ^{2}  ( {\bf v}_{i}(k) ,\mathcal{X}) + \sum_{(i)=1}^{p} \gamma_{(i)}(B_{\eta}+ a B_{\tau} + 2) \alpha_{k} ^{2}  G_{f} ^{2} \\
 & + \sum_{(i)=1}^{p} \gamma_{(i)}(\frac{3}{8 \eta} - \frac{3a}{8 \tau} +  2 \alpha_{k}^{2} L^{2}  + (2-2a) \alpha_{k} L ) dist ^{2}  ( {\bf v}_{i}(k) ,\mathcal{X})
\end{split}
\end{equation}} 
\end{comment}

{ \begin{equation}\label{3.51a}
\begin{split}
\mathbb{E} & [ dist ^{2} ( \overline{{\bf x}_{i}}(k+1),\mathcal{X}) / \mathcal{F}_{k} ] \leq \\
 & \sum_{(i)=0}^{p} \gamma_{(i)}(1+ (A_{\eta} - a A_{\tau})\alpha_{k} ^{2})  \sum_{j=1}^{n}[{\bf W}(k)]_{ij} dist ^{2} ( {\bf x}_{j}(k) ,\mathcal{X}) \\
 & + \sum_{(i)=0}^{p} \gamma_{(i)}(1 - a) \ dist ^{2}  ( {\bf v}_{i}(k) ,\mathcal{X}) + \sum_{(i)=1}^{p} \gamma_{(i)}(B_{\eta} - a B_{\tau} + 2) \alpha_{k} ^{2}  G_{f} ^{2} \\
 & + \sum_{(i)=1}^{p} \gamma_{(i)}(\frac{3}{8 \eta} - \frac{3a}{8 \tau} +  2 \alpha_{k}^{2} L^{2}  + (2-2a) \alpha_{k} L ) dist ^{2}  ( {\bf v}_{i}(k) ,\mathcal{X})
\end{split}
\end{equation}} 

%%%%%%%%%%%%%%%%
\begin{comment}
\vspace{-0.5cm}
Then by using $ 0 \leq  \gamma_{min} \leq \gamma_{i} \leq 1 $ for all servers $ i $ and then summing overall $n$, and having that $[{\bf W}(k)]_{ij}$ doubly stochastic, then using the result \eqref{3.47} and the definition of $ R_{i}(k) $ in Definition 3 and the bounds in Appendix B (i.e., \eqref{3.67}), \eqref{3.51a} reduces to
\end{comment}
%%%%%%%%%%%%%%%%%%

\begin{comment}
Then by using $ 0 \leq  \gamma_{min} \leq \gamma_{(i)} \leq 1 $ for all partitions $ (i) $, the above reduces to
{ \begin{equation}\label{3.51a}
\begin{split}
\mathbb{E} & [ dist ^{2} ( \overline{{\bf x}_{i}}(k+1),\mathcal{X}) / \mathcal{F}_{k} ] \leq \\
 & \sum_{(i)=0}^{p} \frac{1}{2p}(1+ 2 (A_{\eta} - a A_{\tau})\alpha_{k} ^{2})  \sum_{j=1}^{n}[{\bf W}(k)]_{ij} dist ^{2} ( {\bf x}_{j}(k) ,\mathcal{X}) \\
 & + \sum_{(i)=1}^{p} (1 - \gamma_{(i)}a) \ dist ^{2}  ( {\bf v}_{i}(k) ,\mathcal{X}) \\
 & + \sum_{(i)=1}^{p} (\frac{3}{8 \eta} - \frac{3\gamma_{(i)}a}{8 \tau} +  2 \alpha_{k}^{2} L^{2}  + (2-2a) \alpha_{k} L ) dist ^{2}  ( {\bf v}_{i}(k) ,\mathcal{X})\\
 & + \sum_{(i)=1}^{p} (B_{\eta} - a B_{\tau} + 2) \alpha_{k} ^{2}  G_{f} ^{2}.
\end{split}
\end{equation}} 
\end{comment}

Then by using $ 0 \leq  \gamma_{min} \leq \gamma_{(i)} \leq 1 $ for  partitions $ (i) \in \{1,\ldots,p\} $ and choosing $ 0 < a < 1 $ and the fact that $ \sum_{(i)=1}^{p} \gamma_{(i)} \leq \sum_{(i)=0}^{p} \gamma_{(i)} = 1 $, the above reduces to
\begin{comment}
{ \begin{equation}\label{3.51a}
\begin{split}
\mathbb{E} & [ dist ^{2} ( \overline{{\bf x}_{i}}(k+1),\mathcal{X}) / \mathcal{F}_{k} ] \leq \\
 & (1+  (A_{\eta} - a A_{\tau})\alpha_{k} ^{2})  \sum_{j=1}^{n}[{\bf W}(k)]_{ij} dist ^{2} ( {\bf x}_{j}(k) ,\mathcal{X}) \\
 & + (1 - a) \ dist ^{2}  ( {\bf v}_{i}(k) ,\mathcal{X}) \\
 & +  (\frac{3}{8 \eta} - \sum_{(i)=1}^{p}\frac{3\gamma_{(i)}a}{8 \tau} +  2 \alpha_{k}^{2} L^{2}  + (2-2a) \alpha_{k} L ) dist ^{2}  ( {\bf v}_{i}(k) ,\mathcal{X})\\
 & + \sum_{(i)=1}^{p} (B_{\eta} - a B_{\tau} + 2) \alpha_{k} ^{2}  G_{f} ^{2}.
\end{split}
\end{equation}} 
\end{comment}

{ \begin{equation}\label{3.51a}
\begin{split}
\mathbb{E} & [ dist ^{2} ( \overline{{\bf x}_{i}}(k+1),\mathcal{X}) / \mathcal{F}_{k} ] \leq \\
 & (1+  (A_{\eta} - a A_{\tau})\alpha_{k} ^{2})  \sum_{j=1}^{n}[{\bf W}(k)]_{ij} dist ^{2} ( {\bf x}_{j}(k) ,\mathcal{X}) \\
 & + (1 - a) \ dist ^{2}  ( {\bf v}_{i}(k) ,\mathcal{X}) \\
 & +  (\frac{3}{8 \eta} - \sum_{(i)=1}^{p} \frac{3\gamma_{(i)}a}{8 \tau} +  2 \alpha_{k}^{2} L^{2}  + (2-2a) \alpha_{k} L ) dist ^{2}  ( {\bf v}_{i}(k) ,\mathcal{X})\\
 & (B_{\eta} - a B_{\tau} + 2) \alpha_{k} ^{2}  G_{f} ^{2}.
\end{split}
\end{equation}} 

\begin{comment}
\vspace{-0.3cm}
Then summing overall $n$, and having that $[{\bf W}(k)]_{ij}$ doubly stochastic and having $ \sum_{(i)=0}^{p}\gamma_{(i)}=1 $ we have a.s. that

\vspace{-0.5cm}
{ \begin{equation*}\label{3.52}
\begin{split}
\mathbb{E} & [ \sum_{i=1}^{n} dist ^{2} ( \overline{{\bf x}_{i}}(k+1),\mathcal{X}) / \mathcal{F}_{k} ] \leq \\
 & \frac{1}{2p}(1+ 2p (A_{\eta} - a A_{\tau})\alpha_{k} ^{2})  \sum_{i=1}^{n} dist ^{2} ( {\bf x}_{j}(k) ,\mathcal{X}) \\ %\sum_{i=1}^{n} \sum_{(i)=1}^{p} (\frac{1}{2} - \gamma_{(i)}a) \  dist ^{2}  ( {\bf v}_{i}(k) ,\mathcal{X}) \\
 & + \sum_{i=1}^{n} \sum_{(i)=1}^{p} (\frac{3}{8 \eta} - \frac{3\gamma_{(i)}a}{8 \tau}  +  2 \alpha_{k}^{2} L^{2}  + (2-2a) \alpha_{k} L )   dist ^{2}  ( {\bf v}_{i}(k) ,\mathcal{X}) \\
 & + n p \ (B_{\eta} - a B_{\tau} + 2) \alpha_{k} ^{2} G_{f} ^{2} + \sum_{i=1}^{n} \sum_{(i)=1}^{p} (1 - \gamma_{(i)}a) \  dist ^{2}  ( {\bf v}_{i}(k) ,\mathcal{X}) 
\end{split}
\end{equation*}}
\end{comment}

Then summing overall $n$, and having that $[{\bf W}(k)]_{ij}$ doubly stochastic, we have a.s. that

\vspace{-0.5cm}
{ \begin{equation*}\label{3.52a}
\begin{split}
\mathbb{E} & [ \sum_{i=1}^{n} dist ^{2} ( \overline{{\bf x}_{i}}(k+1),\mathcal{X}) / \mathcal{F}_{k} ] \leq \\
 & (1+  (A_{\eta} - a A_{\tau})\alpha_{k} ^{2})  \sum_{i=1}^{n} dist ^{2} ( {\bf x}_{j}(k) ,\mathcal{X}) \\ %\sum_{i=1}^{n} \sum_{(i)=1}^{p} (\frac{1}{2} - \gamma_{(i)}a) \  dist ^{2}  ( {\bf v}_{i}(k) ,\mathcal{X}) \\
 & +   (\frac{3}{8 \eta} - \sum_{(i)=1}^{p} \frac{3\gamma_{(i)}a}{8 \tau}  +  2 \alpha_{k}^{2} L^{2}  + (2-2a) \alpha_{k} L ) \sum_{i=1}^{n}  dist ^{2}  ( {\bf v}_{i}(k) ,\mathcal{X}) \\
 & + \ n (B_{\eta} - a B_{\tau} + 2) \alpha_{k} ^{2} G_{f} ^{2} +  (1 - a) \  dist ^{2} \sum_{i=1}^{n}  ( {\bf v}_{i}(k) ,\mathcal{X}) 
\end{split}
\end{equation*}}

From \eqref{3.43} and \eqref{eq5} substitution we can write
{ \begin{equation}\label{3.44a}
\begin{split}
& \hspace{3cm} {\bf x}_{i}(k+1)=\overline{{\bf x}_{i}}(k+1) + R_{i}(k), \\
& \text{where}  \ \ \overline{{\bf x}_{i}}(k+1)={\bf v}_{i}(k)-\alpha_{k}\nabla{f^{(i)}({\bf v}_{i}(k))}
\end{split}
\end{equation}}
(cf. Definition~\ref{Rk} for the definition of $ R_{i}(k) $).

% removed now 1
\begin{comment}
Then from the convexity of the norm squared, we have
\begin{equation}\label{3.44}
dist ^{2}  ( {\bf v}_{i}(k) ,\mathcal{X})\leq \sum_{j=1}^{n}[{\bf W}(k)]_{ij} dist ^{2} ( {\bf x}_{j}(k) ,\mathcal{X}).
\end{equation}}
\end{comment}

But by using  an equivalent of \eqref{3.44a} for instant $ k $ instead of $ k+1 $, we have
{\small \begin{equation}\label{3.45}
\begin{split}
dist & ({\bf x}_{j}(k),\mathcal{X})  = \| \overline{{\bf x}_{j}}(k) + R_{j}(k-1) - \Pi_{\mathcal{X}} [\overline{{\bf x}_{j}}(k) + R_{j}(k-1)] \| \\
& \leq \| \overline{{\bf x}_{j}}(k) - \Pi_{\mathcal{X}} [\overline{{\bf x}_{j}}(k) ] + R_{j}(k-1) \| \\
& \leq dist(\overline{{\bf x}_{j}}(k),\mathcal{X}) +  \| R_{j}(k-1) \|.
\end{split}
\end{equation}}

\begin{comment}
Squaring both sides, we have
\begin{equation}\label{3.46}
\begin{split}
 dist ^{2}  ({\bf x}_{j}(k),\mathcal{X}) 
 \leq &  dist ^{2} (\overline{{\bf x}_{j}}(k),\mathcal{X}) +  \| R_{j}(k-1) \| ^{2} \\ 
 & +  2 dist (\overline{{\bf x}_{j}}(k),\mathcal{X}) \| R_{j}(k-1) \|.
\end{split}
\end{equation}
\end{comment}

Using $ 2ab \leq a^{2}+b^{2} $ and squaring both sides, the above becomes
{ \begin{equation}\label{3.47}
\begin{split}
 dist ^{2}  ({\bf x}_{j}(k),  \mathcal{X})
 \leq  & 2 dist ^{2} (\overline{{\bf x}_{j}}(k),\mathcal{X}) + 2 \| R_{j}(k-1) \| ^{2}.
\end{split}
\end{equation}}

{\bf Applying the Supermartingale Theorem in Lemma~\ref{L3.5}:} Then using the result of (\ref{3.47}) and writing the above in the format of the supermartingale theorem, we see that

{ \begin{equation}\label{3.55a}
\begin{split}
\mathbb{E} & [ \sum_{i=1}^{n} dist ^{2} ( \overline{{\bf x}_{i}}(k+1),\mathcal{X}) / \mathcal{F}_{k} ] \leq \\
 &   (1+  (A_{\eta} - a A_{\tau})\alpha_{k} ^{2})  \sum_{i=1}^{n}  dist ^{2} (\overline{{\bf x}_{i}}(k),\mathcal{X})\\
 & + ( 1 - a + \frac{3}{8 \eta} \\
 & - \frac{3\gamma_{min}ap}{8 \tau}  +  2 \alpha_{k}^{2} L^{2}  + (2-2a) \alpha_{k} L )  \sum_{i=1}^{n} dist ^{2}  ( {\bf v}_{i}(k) ,\mathcal{X}) \\
 &  + n (B_{\eta} - a B_{\tau} + 2) \alpha_{k} ^{2} G_{f} ^{2} \\
 & +  (1 +  (A_{\eta} - a A_{\tau})\alpha_{k} ^{2}) \sum_{i=1}^{n}  \| R_{i}(k-1) \| ^{2}  \ \ a.s.
\end{split}
\end{equation}}

Then by using the result of (\ref{3.67}) which implies the boundedness of $ \sum_{i=1}^{n}\|R_{i}(k-1)\|^{2} $ for $ k \geq \bar{k}_{1} $, we can apply the supermartingale convergence theorem on (\ref{3.55a}) with the following substitutions : 
$ v_{k+1}  = \sum_{i=1}^{n} dist ^{2}(\overline{{\bf x}_{i}}(k+1),\mathcal{X}) $, 
i.e., $ v_{k}  = \sum_{i=1}^{n} dist ^{2}(\overline{{\bf x}_{i}}(k),\mathcal{X}) $, 
$ a_{k} =  (A_{\eta} - a A_{\tau}) \alpha _{k} ^{2} $ and $ A_{\eta} - a A_{\tau} > 0 $,
%{\begin{equation}\label{3.74}
%\begin{split}
$ b_{k}   =  \ n (B_{\eta} - a B_{\tau} + 2) \alpha_{k} ^{2} G_{f} ^{2} 
 +  (1 + (A_{\eta} -a A_{\tau})\alpha_{k} ^{2}) \sum_{i=1}^{n}\|R_{i}(k-1)\|^{2} $
%\end{split}
%\end{equation}}
where $ \sum_{k=0} ^{\infty} a_{k}<\infty $ and $ \sum_{k=0} ^{\infty} b_{k}< \infty $ since $ \sum_{k=0} ^{\infty}\alpha_{k}^{2} < \infty  $ and $ u_{k} = \sum_{i=1}^{n} dist ^{2} ({\bf v}_{i}(k),\mathcal{X}) $.
i.e., for $ b_{k} $ notice that $ \sum_{i=1}^{n}\|R_{i}(k-1)\|^{2} $ is bounded since  $ \sum_{i=1}^{n}\|{\bf v}_{i}(k-1)- x^{*}\|^{2}  < D < \infty $ for every $ k > \bar{k}_{1} $ from the Appendix~\ref{appB} (i.e., Proposition ~\ref{sum_v_i-x^*_type2} or ~\ref{sum_v_i-x^*_type2a}).
%and since at each instant the server is connected to a finite number of nodes.

But  $ u_{k}= \sum_{i=1}^{n} dist^{2}({\bf v}_{i}(k),{\mathcal{X}})) \leq  \sum_{i=1}^{n}\|{\bf v}_{i}(k)- x^{*}\|^{2}  < D < \infty $ for $ k \geq \bar{k}_{1} $, (i.e., Proposition ~\ref{sum_v_i-x^*_type2} or ~\ref{sum_v_i-x^*_type2a}), and its  coefficient is $ ( 1 - a + \frac{3}{8 \eta} - \frac{3 \gamma_{min}ap}{8 \tau} +  2 \alpha_{k}^{2} L^{2} + (2-2a) \alpha_{k}L) < -1 $ for $k \geq k_{1} $ in order to apply the supermartingale theorem.

\begin{remark}

But $ \alpha_{k} \rightarrow 0 $ (since $ \alpha_{k} $ is chosen such that $ \sum_{k=0}^{\infty} \alpha_{k}^{2} < \infty $). Thus, we can bound $ 2 \alpha_{k}^{2} L^{2} + (2-2a) \alpha_{k}L < \epsilon $ as $ \alpha_{k} \rightarrow 0 $.
%We need $ A_{\eta} - a A_{\tau} > 0 $ and 
Thus, $  2 + \frac{3}{8 \eta} + \epsilon  <   a  + \frac{3 \gamma_{min}ap}{8 \tau} $ % where we can bound  $  2 \alpha_{k}^{2} L^{2} + (2-2a)\alpha_{k}L  < \epsilon $ as $ \alpha_{k} \rightarrow 0 $. 
Thus, taking $ \epsilon < 1 $ and $ \eta > \tau $ where $ \eta = l \tau $ and $ l > 1 $ such that what preceded applies. Then $ a > \frac{ 16 \eta + 3 } { (3  \gamma_{min} p + 8 \tau)l }$ where we can choose $ l $ such that $ a < 1 $.
Therefore, by reducing this inequality with the conditions on the values of the above variables we have $ 8 l \tau < 3(\gamma_{min} p l - 1)$ where we get a sufficient condition for convergence which is that $ \gamma_{min} p l >  1 $, but $ l > \frac{1}{\gamma_{min} p} > 1 $, then a sufficient condition is for $ p < \frac{1}{\gamma_{min}} $.

e.g. for $ l=3 $, $ \gamma_{min}p = 2 $ we have $ \tau < \frac{3}{8} $, so we can choose $ \tau = \frac{1}{4} $, $ \eta = \frac{3}{4} $ and $ 1 > a > \frac{5}{8} $. Thus, choosing an $ a = \frac{3}{4} $ is sufficient for this Lemma~1 to follow.

Thus, the coefficient of $ u_{k} $ is negative for $ k \geq k_{1} $.

\end{remark}

Then from the supermartingale theorem holding for the tail of the sequences (i.e., $ k > \max(k_{1},\bar{k}_{1}))$ we have $\sum_{k=0}^{\infty} u_{k} <\infty $. That is, $ \sum_{k=0}^{\infty}\sum_{i=1}^{n} dist ^{2} ({\bf v}_{i}(k),\mathcal{X}) < \infty $.
We can interchange infinite and finite sums, as an implicit consequence of the linearity of these sums. Thus, we have
$ \sum_{i=1}^{n}( \sum_{k=0}^{\infty} dist ^{2} ({\bf v}_{i}(k),\mathcal{X}) ) < \infty$
$ \implies $ the argument inside the finite sum is bounded, i.e., 
{ 
\begin{equation}
\begin{split}
\sum_{k=0}^{\infty} dist ^{2} ({\bf v}_{i}(k),\mathcal{X}) < \infty,
\end{split}
\end{equation}}
the result we require. And $ \lim_{k \rightarrow \infty}dist ^{2} ({\bf v}_{i}(k),\mathcal{X}) = 0 $.
\end{proof}

\begin{lem}\label{L3.9}
Let Assumptions~\ref{A3.1} hold and Proposition~\ref{sum_v_i-x^*_type2} or ~\ref{sum_v_i-x^*_type2a} satisfied. Also, assume that the stepsize sequence $ \{ \alpha_{k} \} $ is non-increasing such that $ \sum_{k=0}^{\infty} \alpha_{k} ^{2} < \infty $, 
and define ${\bf \epsilon}_{i}(k) = {\bf x}_{i}(k+1)- {\bf v}_{i}(k) $ for all $ i \in V $ and $ k \geq 0 $. Then, we have $ a.s. $ 
{ \begin{equation}\label{3.76}
\begin{split}
\sum_{k=0}^{\infty}||{\bf \epsilon}_{i}(k) || ^{2} < \infty \ for \ all \ i \in V, \\ \sum_{k=0}^{\infty} \alpha_{k} || {\bf v}_{i}(k) - \overline{{\bf v}}(k) || < \infty \ for \ all \ i \in V , \\
where \ \overline{{\bf v}}(k) = \frac{1}{n}\sum_{l=1}^{n}{\bf v}_{l}(k).
\end{split}
\end{equation}}
\end{lem}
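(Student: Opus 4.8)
The plan is to establish the two summability claims in turn, the first feeding the second. By the server update \eqref{eq2}, ${\bf \epsilon}_i(k)={\bf x}_i(k+1)-{\bf v}_i(k)=-\alpha_k\,\widehat{\nabla{f}^{(i)}}(k)$ (and ${\bf \epsilon}_i(k)={\bf 0}$ on the event that server $i$ reaches no partition at step $k$), so $\|{\bf \epsilon}_i(k)\|^2=\alpha_k^2\,\|\widehat{\nabla{f}^{(i)}}(k)\|^2$. It therefore suffices to bound the decoded partition gradient $\widehat{\nabla{f}^{(i)}}(k)$ a.s.\ by a finite (sample-path-dependent) constant $C_i$, for then $\sum_k\|{\bf \epsilon}_i(k)\|^2\le C_i^2\sum_k\alpha_k^2<\infty$ by hypothesis.

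\textbf{Boundedness of the decoded gradient.} For this I would use the explicit form \eqref{eq5.1}: $\widehat{\nabla{f}^{(i)}}(k)$ equals $\nabla{f^{(i)}({\bf v}_i(k))}$ plus a coded correction assembled from differences $\nabla{f}^{(i)}_l({\bf v}_q(k-k'_q))-\nabla{f}^{(i)}_l({\bf v}_i(k))$ weighted by the finitely many coding-matrix entries ${\bf A}^{(i)}_{fit,j},{\bf B}^{(i)}_{j,l}$. The first term is controlled by Assumption~\ref{A3.1}(b)--(c), since $\|\nabla{f^{(i)}({\bf v}_i(k))}\|\le L\,dist({\bf v}_i(k),\mathcal{X})+G_f$, and Lemma~\ref{L3.5} (whose hypotheses --- Assumption~\ref{A3.1}, double stochasticity, $\sum_k\alpha_k^2<\infty$ --- are met here) gives $\sum_k dist^2({\bf v}_i(k),\mathcal{X})<\infty$ a.s., hence $dist({\bf v}_i(k),\mathcal{X})\to0$ and the sequence is bounded. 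The correction term is controlled by the Lipschitz bound $\|\nabla{f}^{(i)}_l({\bf v}_q(k-k'_q))-\nabla{f}^{(i)}_l({\bf v}_i(k))\|\le L\|{\bf v}_q(k-k'_q)-{\bf v}_i(k)\|$, the bounded-delay assumption $0\le k'_q\le H$, the finiteness of $\|{\bf A}^{(i)}\|_\infty$ and $\|{\bf B}^{(i)}\|_{2,\infty}$, and the a priori boundedness of $\{{\bf v}_l(k)\}$ about ${\bf x}^*$ obtained in the first part of the convergence proof (cf.\ the bounds on $R_i(k)$ from Definition~\ref{Rk} and Appendix~\ref{appB}). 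Collecting these estimates gives $\|\widehat{\nabla{f}^{(i)}}(k)\|\le C_i<\infty$ a.s., which proves the first claim. (Equivalently, one may write $\|{\bf \epsilon}_i(k)\|^2\le 2\alpha_k^2\|\nabla{f^{(i)}({\bf v}_i(k))}\|^2+2\|R_i(k)\|^2$ and sum each term separately, using $\sum_k dist^2({\bf v}_l(k),\mathcal{X})<\infty$ and $\sum_k\alpha_k^2<\infty$.)

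\textbf{The consensus estimate.} For the second claim I would first note that, by Cauchy--Schwarz and the first claim, $\sum_k\alpha_k\|{\bf \epsilon}_i(k)\|\le\bigl(\sum_k\alpha_k^2\bigr)^{1/2}\bigl(\sum_k\|{\bf \epsilon}_i(k)\|^2\bigr)^{1/2}<\infty$ for each $i$. Writing the algorithm as ${\bf x}_j(k+1)={\bf v}_j(k)+{\bf \epsilon}_j(k)$ and ${\bf v}_i(k+1)=\sum_j[{\bf W}(k+1)]_{ij}{\bf x}_j(k+1)$, and setting ${\bf \Phi}(k,s)={\bf W}(k){\bf W}(k-1)\cdots{\bf W}(s)$, unrolling gives
\begin{equation*}
{\bf v}_i(k)=\sum_j[{\bf \Phi}(k,1)]_{ij}{\bf v}_j(0)+\sum_{s=0}^{k-1}\sum_j[{\bf \Phi}(k,s+1)]_{ij}{\bf \epsilon}_j(s),
\end{equation*}
and, by double stochasticity (Assumption~\ref{A3.3}), $\overline{{\bf v}}(k)=\overline{{\bf v}}(0)+\sum_{s=0}^{k-1}\tfrac1n\sum_j{\bf \epsilon}_j(s)$. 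Subtracting and invoking the standard geometric-mixing estimate $\bigl|[{\bf \Phi}(k,s)]_{ij}-\tfrac1n\bigr|\le\theta\beta^{\,k-s}$ for products of doubly stochastic matrices satisfying Assumption~\ref{A3.3}, with $\theta>0$ and $\beta\in(0,1)$ (see, e.g., \cite{nedic2011random,lee2013distributed}),
\begin{equation*}
\|{\bf v}_i(k)-\overline{{\bf v}}(k)\|\le\theta\beta^{\,k-1}\sum_j\|{\bf v}_j(0)\|+\theta\sum_{s=0}^{k-1}\beta^{\,k-1-s}\sum_j\|{\bf \epsilon}_j(s)\| .
\end{equation*}
Multiplying by $\alpha_k$ and summing over $k$, the first term contributes at most $\theta\alpha_0(1-\beta)^{-1}\sum_j\|{\bf v}_j(0)\|<\infty$; for the second I would interchange the order of summation and use that $\{\alpha_k\}$ is non-increasing, so that $\sum_{k>s}\alpha_k\beta^{\,k-1-s}\le\alpha_s(1-\beta)^{-1}$, which yields $\sum_k\alpha_k\|{\bf v}_i(k)-\overline{{\bf v}}(k)\|\le\mathrm{const}+\theta(1-\beta)^{-1}\sum_s\alpha_s\sum_j\|{\bf \epsilon}_j(s)\|<\infty$ by the Cauchy--Schwarz bound above. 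This is the second claim.

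\textbf{Main obstacle.} The consensus estimate is the routine ${\bf \Phi}$-matrix mixing argument. The delicate point is the first claim: one must be sure that $\widehat{\nabla{f}^{(i)}}(k)$ --- which is assembled from gradients evaluated at \emph{different} weighted averages ${\bf v}_q(k-k'_q)$, under possibly more stragglers than the code tolerates (Schemes~2--3) and with stale delays up to $H$ --- stays bounded along the trajectory. This is exactly where Lemma~\ref{L3.5}, the a priori boundedness of $\{{\bf v}_i(k)\}$ from Appendix~\ref{appB}, and the finiteness of $\|{\bf A}^{(i)}\|_\infty$, $\|{\bf B}^{(i)}\|_{2,\infty}$ are indispensable; without these inputs the correction term in \eqref{eq5.1} need not be controllable.
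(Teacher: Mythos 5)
Your proof is correct and follows essentially the same route as the paper's: the first claim is obtained by bounding $\|{\bf \epsilon}_{i}(k)\|$ through the Lipschitz/bounded-gradient properties on $\mathcal{X}$, Lemma~\ref{L3.5}, and the boundedness of the coding residual $R_{i}(k)$ from the Appendix, and the second claim feeds $\sum_{k}\alpha_{k}\|{\bf \epsilon}_{i}(k)\|<\infty$ into a doubly-stochastic consensus argument. The only cosmetic difference is that where the paper invokes the perturbed-consensus result (Lemma~\ref{L3.10}) to get $\sum_{k}\alpha_{k}\|{\bf x}_{i}(k)-{\bf x}_{j}(k)\|<\infty$ and then passes to $\|{\bf v}_{i}(k)-\overline{{\bf v}}(k)\|$, you inline that lemma's proof by unrolling the recursion and using the geometric mixing bound on $\boldsymbol{\Phi}(k,s)$ — both rest on the same connectivity/mixing property of the weight matrices.
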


\begin{proof}
\begin{comment}
We begin with the following result, Lemma 6 in \cite{lee2013distributed}.

\begin{lem}\label{L3.10}
Let Assumptions~\ref{A3.2} and \ref{A3.3} hold. Consider the iterates generated by
\begin{equation}\label{3.77}
\begin{split}
\theta_{i}(k+1) = \sum_{j=1}^{n}[{\bf W}(k)]_{ij}\theta_{j}(k)+ e_{i}(k) \ for \ all \ i \in V. 
\end{split}
\end{equation}
Suppose there exists a non-negative non-increasing sequence $ \{ \alpha_{k} \} $ such that $ \sum_{k=0}^{\infty} \alpha_{k}|| e_{i}(k) || < \infty $ for all $ i \in V $, then for all $ i,j \in V $ we have 
\begin{equation}\label{3.78}
\begin{split}
\sum_{k=0}^{\infty} \alpha_{k}|| \theta_{i}(k) - \theta_{j}(k) || < \infty.
\end{split}
\end{equation}
\end{lem}
\end{comment}
Let 
\vspace{-0.5cm}
\begin{equation}\label{3.79}
\begin{split}
& \ \ \ \ \ {\bf \epsilon}_{i}(k)  = {\bf x}_{i}(k+1)-{\bf v}_{i}(k) \\
& \ \ \ \ \  = \overline{{\bf x}_{i}}(k+1) + R_{i}(k) -{\bf v}_{i}(k) \\
& \ \ \ \ \  = {\bf v}_{i}(k) - \alpha_{k}\nabla{f^{(i)}({\bf v}_{i}(k))} + R_{i}(k) -{\bf v}_{i}(k) \\
& \ \ \ \ \ = - \alpha_{k}\nabla{f^{(i)}({\bf v}_{i}(k))} + R_{i}(k).
\end{split}
\end{equation}

%We refer you to \citep{lee2013distributed} for further details.
Notice that in our case, $ f^{(i)} $ is not fixed and can vary on the same node $ i $ and this also complies with $ \epsilon_{i} $ in lemma 6 in \citep{lee2013distributed} which is an arbitrary error at node $ i $.

And, let ${\bf z}_{i}(k)=\Pi_{\mathcal{X}}[{\bf v}_{i}(k)]$ then we have 
{\small \begin{equation}\label{3.80aa}
\begin{split}
& \|{\bf \epsilon}_{i}(k) \|  \leq \|{\bf x}_{i}(k+1)-{\bf z}_{i}(k) \| +\| {\bf z}_{i}(k)-{\bf v}_{i}(k) \| \\
& \leq \| {\bf v}_{i}(k) - \alpha_{k} \nabla{f^{(i)}({\bf v}_{i}(k))} + R_{i}(k) -{\bf z}_{i}(k) \| + \| {\bf z}_{i}(k) -{\bf v}_{i}(k) \| \\
& \leq 2 \| {\bf v}_{i}(k) - {\bf z}_{i} (k) \| + \alpha_{k}\| \nabla{f^{(i)}({\bf v}_{i}(k))} \| +\| R_{i}(k) \| \\
& \leq  2 \| {\bf v}_{i}(k) - {\bf z}_{i} (k) \| + \alpha_{k}\| \nabla{f^{(i)}({\bf z}_{i}(k))} \| \\
& \ \  + \alpha_{k} \| \nabla{f^{(i)}({\bf z}_{i}(k))} - \nabla{f^{(i)}({\bf v}_{i}(k))} \| +\| R_{i}(k) \|.
\end{split}
\end{equation}}
But $ \| \nabla{f^{(i)}({\bf z}_{i}(k))} \| \leq G_{f} $ since $ z = \Pi_{\mathcal{X}}({\bf v}_{i}(k)) \in \mathcal{X}$, \\
and $ \| \nabla{f^{(i)}({\bf z}_{i}(k))} - \nabla{f^{(i)}({\bf v}_{i}(k))} \| \leq L \| {\bf v}_{i}(k) - {\bf z}_{i}(k) \| $.

Therefore, (\ref{3.80aa}) is reduced to
{\small \begin{equation*}\label{3.82}
\begin{split}
\| &{\bf \epsilon}_{i}(k) \|  \leq  \ (2 + \alpha_{k} L) \| {\bf v}_{i}(k) - {\bf z}_{i} (k) \|  + \alpha_{k} G_{f} \\
 & \alpha_{k} ^{2} \|{\bf A}^{(i)} \| ^{2} _{\infty} \|{\bf B}^{(i)}\| _{2,\infty} ^{2} 4 n L^{2} \max_{k-H \leq \hat{k} \leq k} \| {\bf v}_{q}(\hat{k}) -  x^{*}\|^{2}
\end{split}
\end{equation*}}

From $(a+b+c)^{2} \leq 3a^{2}+3b^{2}+3c^{2}$, we get
{\small \begin{equation*}\label{3.83}
\begin{split}
\| &{\bf \epsilon}_{i}(k)  \| ^{2} \leq  3 (2 + \alpha_{k} L)^{2} \| {\bf v}_{i}(k) - {\bf z}_{i} (k) \|^{2} + 3\alpha_{k}^{2} G_{f}^{2} \\ 
& + 48 \alpha_{k} ^{4} \|{\bf A}^{(i)} \| ^{4} _{\infty} \|{\bf B}^{(i)}\| _{2,\infty} ^{4} n^{2} L^{4} \max_{k-1-H \leq \hat{k} \leq k-1} \| {\bf v}_{q}(\hat{k}) -  x^{*}\|^{4}.
\end{split}
\end{equation*}}

{\small \begin{equation}\label{3.83a}
\begin{split}
\sum_{k=0}^{\infty} & \|{\bf \epsilon}_{i}(k)  \| ^{2}  \leq  3 \sum_{k=0}^{\infty}(4 + \alpha_{k}^{2} L^{2}) \| {\bf v}_{i}(k) - {\bf z}_{i} (k) \|^{2} \\
& + 12 \sum_{k=0}^{\infty}\alpha_{k} L \| {\bf v}_{i}(k) - {\bf z}_{i} (k) \|^{2} 
+  3 \sum_{k=0}^{\infty}\alpha_{k}^{2} G_{f}^{2} \\
& + 48 \sum_{k=0}^{\infty} \alpha_{k} ^{4} \|{\bf A}^{(i)} \| ^{4} _{\infty} \|{\bf B}^{(i)}\| _{2,\infty} ^{4} n^{2} L^{4} \max_{k-1-H \leq \hat{k} \leq k-1} \| {\bf v}_{q}(\hat{k}) -  x^{*}\|^{4}.
\end{split}
\end{equation}}

But, we also have from Lemma~\ref{L3.5}
\begin{equation}\label{3.84}
\begin{split}
\sum_{k=0}^{\infty} \| {\bf v}_{i}(k) - {\bf z}_{i}(k) \| ^{2} = & \sum_{k=0}^{\infty} \| {\bf v}_{i}(k) - \Pi_{\mathcal{X}}({\bf v}_{i}(k)) \| ^{2} \\
= & \sum_{k=0}^{\infty} dist ^{2} ({\bf v}_{i}(k), \mathcal{X}) < \infty,
\end{split}
\end{equation}
and 
\begin{equation}\label{3.84a}
\begin{split}
\sum_{k=0}^{\infty}\alpha_{k} \| {\bf v}_{i}(k) & - {\bf z}_{i}(k) \| ^{2} 
=  \sum_{k=0}^{\infty}  \alpha_{k}dist ^{2} ({\bf v}_{i}(k), \mathcal{X}) \\
= & \frac{1}{2}\sum_{k=0}^{\infty}\alpha_{k}^{2} + \frac{1}{2}\sum_{k=0}^{\infty}dist ^{4} ({\bf v}_{i}(k), \mathcal{X}) < \infty,
\end{split}
\end{equation}

and since $\sum_{k=0}^{\infty}\alpha_{k}^{2} < \infty $ which also implies that $\sum_{k=0}^{\infty}\alpha_{k}^{4} < \infty $.
And $ \sum_{i=1}^{n}\| R_{i}(k) \|^{2} $ is bounded since $ \|{\bf v}_{i}(k) - x^{*} \| ^{2} $ is bounded (i.e., Proposition ~\ref{sum_v_i-x^*_type2} or ~\ref{sum_v_i-x^*_type2a}).

%$ \sum_{k=0}^{\infty}dist^{2}({\bf v}_{m}(k),{\mathcal{X}})$ is bounded. \\

From the preceding four assumptions we have that 
in (\ref{3.83a}),
\begin{equation}\label{3.85}
\begin{split}
\sum_{k=0}^{\infty}\|{\bf \epsilon}_{i}(k) \| ^{2} < \infty.
\end{split}
\end{equation} 

By applying  $2ab \leq a^{2}+b^{2}$ on $\sum_{k=0}^{\infty}\alpha_{k}^{2} < \infty$ and $ \sum_{k=0}^{\infty} \|{\bf \epsilon}_{i}(k) \| ^{2} < \infty $, we have 
\begin{equation}\label{3.86}
\begin{split}
\sum_{k=0}^{\infty} \alpha_{k} \|{\bf \epsilon}_{i}(k) \|  \leq \frac{1}{2}\sum_{k=0}^{\infty}\alpha_{k}^{2} + \frac{1}{2} \sum_{k=0}^{\infty} \|{\bf \epsilon}_{i}(k) \| ^{2} < \infty.
\end{split}
\end{equation} 
But ${\bf x}_{i}(k+1) = {\bf v}_{i}(k) +{\bf \epsilon}_{i}(k)$ and $ {\bf v}_{i}(k) = \sum_{j=1}^{n}[{\bf W}(k)]_{ij}{\bf x}_{j}(k) $ where $ \sum_{j=1}^{n}[{\bf W}(k)]_{ij} =1 $  and $ \sum_{k=0}^{\infty} \alpha_{k} \|{\bf \epsilon}_{i}(k) \|  < \infty $ a.s. \\

Therefore, by Lemma~\ref{L3.10} this implies that
$ \sum_{k=0}^{\infty}\alpha_{k} \|{\bf x}_{i}(k) -{\bf x}_{j}(k) \| < \infty $ \\
Now, let $\bar{{\bf v}}(k)=\frac{1}{n}\sum_{i=1}^{n}{\bf v}_{i}(k)$. \\
Since $ {\bf v}_{i}(k) = \sum_{j=1}^{n}[{\bf W}(k)]_{ij}{\bf x}_{j}(k) $ where $ \sum_{j=1}^{n}[{\bf W}(k)]_{ij} = 1 $ implies that 
\begin{equation}\label{3.87}
\begin{split}
\|{\bf v}_{i}(k)-\bar{{\bf v}}(k) \| = & \| \sum_{j=1}^{n}[{\bf W}(k)]_{ij}{\bf x}_{j}(k) - \sum_{j=1}^{n}[{\bf W}(k)]_{ij} \bar{{\bf v}}(k) \| \\ 
\leq & \sum_{j=1}^{n}[{\bf W}(k)]_{ij} \| {\bf x}_{j}(k) - \bar{{\bf v}}(k) \|,
\end{split}
\end{equation}
where the inequality follows by the convexity of the norm.
But, $[{\bf W}(k)]$ is doubly stochastic, so we have 
$\bar{{\bf v}}(k) = \frac{1}{n} \sum_{i=1}^{n}{\bf v}_{i}(k) = \frac{1}{n}\sum_{i=1}^{n} \sum_{j=1}^{n}[{\bf W}(k)]_{ij}{{\bf x}_{j}(k)}=\frac{1}{n} \sum_{i=1}^{n}{\bf x}_{i}(k)$. 
\begin{equation}\label{3.88}
\begin{split}
\|{\bf v}_{i}(k)-\bar{{\bf v}}(k) \| & \leq \sum_{j=1}^{n}[{\bf W}(k)]_{ij} \| {\bf x}_{j}(k) - \bar{{\bf v}}(k) \|  \\
& \leq  \sum_{j=1}^{n}\| {\bf x}_{j}(k) - \bar{{\bf v}}(k) \|  \\ 
& \leq  \sum_{j=1}^{n}\| {\bf x}_{j}(k) - \frac{1}{n}\sum_{i=1}^{n}{\bf x}_{i}(k) \|,
\end{split}
\end{equation}
where in the first inequality we used convexity of the norm, in the second $ 0 \leq [{\bf W}(k)]_{ij} \leq 1$ and in the third we substituted the preceding result on $\bar{{\bf v}}(k)$.
Therefore, 
\begin{equation}\label{3.89}
\begin{split}
\|{\bf v}_{i}(k)-\bar{{\bf v}}(k) \| \leq & \sum_{j=1}^{n} \| {\bf x}_{j}(k) - \frac{1}{n} \sum_{i=1}^{n}{\bf x}_{i}(k) \|  \\ 
\leq & \sum_{j=1}^{n}\| \frac{1}{n} \sum_{i=1}^{n} {\bf x}_{j}(k) - \frac{1}{n} \sum_{i=1}^{n}{\bf x}_{i}(k) \|  \\ 
\leq & \frac{1}{n}  \sum_{j=1}^{n} \sum_{i=1}^{n}  \|  {\bf x}_{j}(k) -  {\bf x}_{i}(k) \|,
\end{split}
\end{equation}
where by the convexity of norm we have the last inequality.
Thus, we have 
\begin{equation*}\label{3.90}
\begin{split}
\alpha_{k}\|{\bf v}_{i}(k)-\bar{{\bf v}}(k) \| \leq & \frac{\alpha_{k}}{n}  \sum_{j=1}^{n} \sum_{i=1}^{n}  \|  {\bf x}_{j}(k) -  {\bf x}_{i}(k) \|.
\end{split}
\end{equation*}
Then
\begin{equation*}\label{3.91}
\begin{split}
\sum_{k=0}^{\infty} \alpha_{k}\|{\bf v}_{i}(k)-\bar{{\bf v}}(k) \| \leq & \sum_{k=0}^{\infty} \frac{\alpha_{k}}{n}  \sum_{j=1}^{n} \sum_{i=1}^{n}  \|  {\bf x}_{j}(k) -  {\bf x}_{i}(k) \| \\ 
\end{split}
\end{equation*}
\vspace{-0.3cm}
\begin{equation*}\label{3.92}
\begin{split}
\implies \sum_{k=0}^{\infty} \alpha_{k}\|{\bf v}_{i}(k)-\bar{{\bf v}}(k) \| \leq & \sum_{k=0}^{\infty} \frac{\alpha_{k}}{n}  \sum_{j=1}^{n} \sum_{i=1}^{n}  \|  {\bf x}_{j}(k) -  {\bf x}_{i}(k) \| \\ 
\leq &  \sum_{j=1}^{n} \sum_{i=1}^{n} \sum_{k=0}^{\infty} \frac{\alpha_{k}}{n}  \|  {\bf x}_{j}(k) -  {\bf x}_{i}(k) \|. 
\end{split}
\end{equation*}
The second inequality is valid since we can interchange infinite sum with finite sum.
But, from a previous result (Lemma~\ref{L3.10}) we have $ \sum_{k=0}^{\infty}\alpha_{k} \|{\bf x}_{i}(k) -{\bf x}_{j}(k) \| < \infty $ which through the preceding, 
$ \implies \sum_{k=0}^{\infty} \alpha_{k}\|{\bf v}_{i}(k)-\bar{{\bf v}}(k) \| < \infty $.
\\ But $\sum_{k=0}^{\infty} \alpha_{k} = \infty $ by choice. 
$\implies \|{\bf v}_{i}(k)-\bar{{\bf v}}(k) \| \to 0$, i.e., 
$\lim_{k \rightarrow \infty}{{\bf v}_{i}(k)} = \lim_{k \rightarrow \infty}{\bar{{\bf v}}(k)} $ if they exist.

\begin{comment}
\begin{remark}
Here, we can't deduce that $ \sum_{k=0}^{\infty}\alpha_{k} \|{\bf x}_{i}(k) -{\bf x}_{j}(k) \| < \infty $ and $\sum_{k=0}^{\infty} \alpha_{k} = \infty $  then $ \|{\bf x}_{i}(k) -{\bf x}_{j}(k) \| \to 0 \rightarrow \lim{{\bf x}_{i}(k)} = x^{'} $. This is because  $ \|{\bf x}_{i}(k) -{\bf x}_{j}(k) \| \to 0 $  in (\ref{3.92}) does not imply that it necessarily converge to an accumulation point. (Therefore, proving convergence does not result in proving this lemma). In fact, we have

{\small \begin{equation}\label{3.93}
\begin{split}
  \sum_{k=0}^{n} \frac{\alpha_{k}}{n} \sum_{j=1}^{n} \sum_{i=1}^{n}  \|  {\bf x}_{j}(k) & -  {\bf x}_{i}(k) \|\\
 = \sum_{j=1}^{n} \sum_{i=1}^{n} \sum_{k=0}^{\infty} \frac{\alpha_{k}}{n} & \|  {\bf x}_{j}(k) -  {\bf x}_{i}(k) \| < \infty \\
 \implies & \sum_{k=0}^{\infty}  \alpha_{k} \|{\bf v}_{i}(k)-\bar{{\bf v}}(k) \| < \infty.
\end{split}
\end{equation}}

Hence, here we can interchange the sums not because of the boundedness which as we showed even then won't suffice. We thus can interchange the sums since they are infinite and finite sums with no need for boundedness as it follows from the linearity of both infinite and finite sum. In particular, infinite sum as the interchange for finite sum can follow with no need of its linearity due to the finiteness of the latter sum (i.e., not boundedness), and lemma~\ref{L3.10}. 
\end{comment}
Therefore, Lemma~\ref{L3.9} follows.
\end{proof}

\begin{lem}\label{L3.11}
Let Assumptions~\ref{A3.1} hold and Proposition ~\ref{sum_v_i-x^*_type2} or ~\ref{sum_v_i-x^*_type2a} satisfied. Let $ p < \frac{1}{\gamma_{min}} $ and let the step-size be such that $ \sum_{k=0}^{\infty} \alpha_{k} = \infty $ and $ \sum_{k=0}^{\infty} \alpha_{k} ^{2} < \infty $.
Let $ f ^{*} = min_{x \in \mathcal{X}} f({\bf x}) $ and $ \mathcal{X}^{*} = \{ x \in \mathcal{X} | f({\bf x}) = f ^{*} \} $.
%i.e., here $ f ^{*} = \| y - {\bf A}x \| ^{2} = 0 $ and $ \mathcal{X} ^{*} = $ solution set of $ y = {\bf A}x $. \\
Assume then that $ \mathcal{X} ^ {*} \neq  \Phi $. Then, the iterates $ \{ x(k) \} $ generated by SRDO algorithm (\ref{eq2}a)-(\ref{eq2}c) converge almost surely to the solution $ {\bf x} ^{*} \in \mathcal{X} ^{*} $, i.e.,
\begin{equation}\label{3.94}
\lim_{k \to \infty } {\bf x}_{i}(k) = {\bf x} ^{*} \ for \ all \ i \in V \ a.s. \\
\end{equation}
\end{lem}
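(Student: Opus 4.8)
The plan is to run the classical three-step argument for distributed stochastic (sub)gradient methods based on the Robbins--Siegmund theorem: (i) derive a supermartingale recursion for $\sum_{l=1}^{n}\|{\bf v}_{l}(k)-{\bf x}^{*}\|^{2}$ valid for an \emph{arbitrary fixed} ${\bf x}^{*}\in\mathcal X^{*}$; (ii) apply Theorem~1 to get both a.s.\ convergence of that sum and summability of an optimality-gap series, from which a limit point in $\mathcal X^{*}$ is extracted; (iii) specialize ${\bf x}^{*}$ to that limit point and upgrade subsequential convergence to convergence of the whole sequence. Throughout I would invoke the two parts already proved: Lemma~\ref{L3.5} ($\sum_{k}\mathrm{dist}^{2}({\bf v}_{i}(k),\mathcal X)<\infty$, hence $\mathrm{dist}({\bf v}_{i}(k),\mathcal X)\to0$) and Lemma~\ref{L3.9} ($\sum_{k}\|{\bf \epsilon}_{i}(k)\|^{2}<\infty$, $\sum_{k}\alpha_{k}\|{\bf v}_{i}(k)-\overline{{\bf v}}(k)\|<\infty$, hence $\|{\bf v}_{i}(k)-\overline{{\bf v}}(k)\|\to0$).

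Step (i). Starting from ${\bf x}_{i}(k+1)={\bf v}_{i}(k)-\alpha_{k}\nabla f^{(i)}({\bf v}_{i}(k))+R_{i}(k)$ and ${\bf v}_{i}(k+1)=\sum_{j}[{\bf W}(k)]_{ij}{\bf x}_{j}(k+1)$, convexity of $\|\cdot\|^{2}$ together with double stochasticity of ${\bf W}(k)$ gives (this is Lemma~\ref{general_error_bound} after taking $\mathbb E[\cdot\mid\mathcal F_{k}]$) a bound whose only non-summable term is $2\alpha_{k}\sum_{j}\sum_{(i)}\gamma_{(i)}\langle\nabla f^{(i)}({\bf v}_{j}(k)),{\bf x}^{*}-{\bf v}_{j}(k)\rangle$. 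I would convert this by convexity of each $f^{(i)}$ on $\mathbb R^{N}$ into $-2\alpha_{k}\sum_{j}\sum_{(i)}\gamma_{(i)}\big(f^{(i)}({\bf v}_{j}(k))-f^{(i)}({\bf x}^{*})\big)$, then move from the off-feasible, mutually disagreeing points ${\bf v}_{j}(k)$ to the common feasible point ${\bf z}(k):=\Pi_{\mathcal X}(\overline{{\bf v}}(k))$; the resulting discrepancies are each bounded, via Assumption~\ref{A3.1}(b)--(c), by a constant times $\mathrm{dist}({\bf v}_{j}(k),\mathcal X)$ or $\|{\bf v}_{j}(k)-\overline{{\bf v}}(k)\|$, so that after multiplication by $\alpha_{k}$ they are summable by Lemmas~\ref{L3.5} and \ref{L3.9}. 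Combining this with the bound on $\|R_{i}(k)\|$ (Appendix) and $\sum_{k}\alpha_{k}^{2}<\infty$, one obtains, for all large $k$,
\[
\mathbb E\Big[\textstyle\sum_{l}\|{\bf v}_{l}(k+1)-{\bf x}^{*}\|^{2}\,\Big|\,\mathcal F_{k}\Big]\le(1+a_{k})\sum_{l}\|{\bf v}_{l}(k)-{\bf x}^{*}\|^{2}-c\,\alpha_{k}\big(f({\bf z}(k))-f^{*}\big)+b_{k},
\]
with $c>0$ and nonnegative $a_{k},b_{k}$ satisfying $\sum_{k}a_{k}<\infty$, $\sum_{k}b_{k}<\infty$ a.s.

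Steps (ii) and (iii). Since ${\bf z}(k)\in\mathcal X$ the subtracted term is nonnegative, so Theorem~1 yields that $\sum_{l}\|{\bf v}_{l}(k)-{\bf x}^{*}\|^{2}$ converges a.s.\ to a finite random variable for every fixed ${\bf x}^{*}\in\mathcal X^{*}$, and $\sum_{k}\alpha_{k}\big(f({\bf z}(k))-f^{*}\big)<\infty$ a.s.; with $\sum_{k}\alpha_{k}=\infty$ this forces $\liminf_{k}\big(f({\bf z}(k))-f^{*}\big)=0$. Convergence of $\sum_{l}\|{\bf v}_{l}(k)-{\bf x}^{*}\|^{2}$ also makes $\{{\bf v}_{l}(k)\}$, hence $\{\overline{{\bf v}}(k)\}$ and $\{{\bf z}(k)\}$, bounded, so a subsequence ${\bf z}(k_{t})\to\tilde{\bf x}\in\mathcal X$; continuity of $f$ gives $f(\tilde{\bf x})=f^{*}$, i.e.\ $\tilde{\bf x}\in\mathcal X^{*}$, and since $\mathrm{dist}(\overline{{\bf v}}(k),\mathcal X)\to0$ and $\|{\bf v}_{l}(k)-\overline{{\bf v}}(k)\|\to0$, also ${\bf v}_{l}(k_{t})\to\tilde{\bf x}$ for every $l$. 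Now re-run Step (i) with ${\bf x}^{*}=\tilde{\bf x}$: then $\sum_{l}\|{\bf v}_{l}(k)-\tilde{\bf x}\|^{2}$ converges a.s., but it tends to $0$ along $\{k_{t}\}$, so the limit is $0$; thus ${\bf v}_{l}(k)\to\tilde{\bf x}$ for all $l$, and finally ${\bf x}_{i}(k+1)={\bf v}_{i}(k)+{\bf \epsilon}_{i}(k)$ with $\|{\bf \epsilon}_{i}(k)\|\to0$ (Lemma~\ref{L3.9}) gives ${\bf x}_{i}(k)\to\tilde{\bf x}\in\mathcal X^{*}$ a.s.\ for all $i\in V$, which is \eqref{3.94}.

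Main obstacle. The only genuinely delicate step is (i): arranging the ``$-u_{k}$'' term of the Robbins--Siegmund recursion as a clean \emph{nonnegative} optimality gap at a \emph{feasible} point after simultaneously (a) passing from the local, infeasible, disagreeing ${\bf v}_{j}(k)$ to the single feasible point $\Pi_{\mathcal X}(\overline{{\bf v}}(k))$, and (b) reconciling the random partition-selection weights $\gamma_{(i)}$ with $f=\sum_{i}f^{(i)}$, all while checking that every discarded remainder is absorbed into $\sum_{k}\alpha_{k}\mathrm{dist}({\bf v}_{l}(k),\mathcal X)<\infty$, $\sum_{k}\alpha_{k}\|{\bf v}_{l}(k)-\overline{{\bf v}}(k)\|<\infty$, or $\sum_{k}\alpha_{k}^{2}<\infty$. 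A secondary, routine subtlety is that an individual $f^{(i)}$ need not be bounded below, so the bookkeeping must be organized so that only \emph{differences} of $f^{(i)}$-values appear, which are controlled by the Lipschitz-gradient property.
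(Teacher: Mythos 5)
Your overall architecture is exactly the paper's: a Robbins--Siegmund recursion for the squared distances to a fixed $ {\bf x}^{*} \in \mathcal{X}^{*} $, summability of $ \alpha_{k}\bigl( f(\cdot)-f^{*}\bigr) $ evaluated at a projected/averaged feasible point (the paper uses $ \bar{{\bf z}}(k)=\tfrac{1}{n}\sum_{i}\Pi_{\mathcal{X}}({\bf v}_{i}(k)) $ where you use $ \Pi_{\mathcal{X}}(\overline{{\bf v}}(k)) $, an immaterial difference), extraction of an accumulation point in $ \mathcal{X}^{*} $ via continuity of $ f $, and the upgrade to convergence of the whole sequence using convergence of $ \{\|{\bf v}_{i}(k)-{\bf x}^{*}\|\} $ for every $ {\bf x}^{*}\in\mathcal{X}^{*} $ together with Lemmas~\ref{L3.5} and \ref{L3.9}. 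Steps (ii) and (iii) are essentially identical to \eqref{3.117}--\eqref{3.125}.

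There is, however, one genuine gap in Step (i), precisely at the spot you flag as the ``main obstacle'' but then claim is absorbable. After converting the inner product to function differences you must pass from $ \sum_{(i)}\gamma_{(i)}\bigl(f^{(i)}({\bf z})-f^{(i)}({\bf x}^{*})\bigr) $ to a nonnegative multiple of $ f({\bf z})-f^{*}=\sum_{(i)}\bigl(f^{(i)}({\bf z})-f^{(i)}({\bf x}^{*})\bigr) $. When the $ \gamma_{(i)} $ are unequal, the individual gaps $ f^{(i)}({\bf z})-f^{(i)}({\bf x}^{*}) $ can be negative (since $ {\bf x}^{*} $ minimizes $ f $, not each $ f^{(i)} $), so the best generic lower bound is $ \gamma_{\min}\bigl(f({\bf z})-f^{*}\bigr)-C $ for a \emph{constant} $ C>0 $ depending on $ \max_{(i)}\|{\bf x}^{(i)}-{\bf x}^{*}\| $. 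After multiplication by $ 2\alpha_{k} $ this leaves a residual $ 2C\alpha_{k} $ in the $ b_{k} $ slot, and $ \sum_{k}\alpha_{k}=\infty $, so it is \emph{not} summable and Theorem~1 does not apply as you state it; it cannot be hidden among the $ \alpha_{k}\,\mathrm{dist}({\bf v}_{j}(k),\mathcal{X}) $ and $ \alpha_{k}\|{\bf v}_{j}(k)-\overline{{\bf v}}(k)\| $ terms. The paper confronts exactly this term (it appears as $ \sum_{(i)\in I}\gamma_{(i)}\langle\nabla f^{(i)}({\bf x}^{*}),{\bf x}^{(i)}-{\bf x}^{*}\rangle $ in \eqref{3.101} and as the $ \alpha_{k}\max_{(i)}\|{\bf x}^{(i)}-{\bf x}^{*}\|^{2} $ term in \eqref{3.113a}) and disposes of it by cancelling it against the strictly negative $ O(\alpha_{k})\,\|{\bf v}_{i}(k)-{\bf x}^{(i)}\|^{2} $ contribution through the choice of the free parameters $ a,\eta,\tau $ in Lemmas~\ref{L3.6}--\ref{L3.7}, arriving at \eqref{3.115} where the offending term has disappeared. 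Your proposal contains no analogous mechanism; it would go through as written only in the special case of equal selection probabilities $ \gamma_{(i)} $ (or under the hypothesis $ f^{(i)}({\bf x}^{*})\le f^{(i)}({\bf x}^{(i)}) $ for all $ (i) $, which is the setting of Proposition~\ref{Convergence_type1}, not of this lemma).
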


\begin{proof}
We begin the proof of Lemma~\ref{L3.11} with the following.\\
For any ${\bf x}^{*} \in \mathcal{X}$, we have
\begin{equation}\label{3.95}
\begin{split}
\implies & \| \overline{{\bf x}_{i}}(k+1) - {\bf x}^{*} \| ^{2}\leq \\
 & \|\overline{{\bf x}_{i}}(k+1) - {\bf x}^{*} \| ^{2} + a \| \Pi_{\mathcal{X}}(\overline{{\bf x}_{i}}(k+1)) - {\bf x}^{*} \| ^{2}. 
\end{split}
\end{equation}
where $a > 0$.

%And using a similar analysis as in Lemmas 2 and 3 of Part 1 and having $ {\bf z}_{i}(k) = \Pi_{\mathcal{X}}({\bf v}_{i}(k)) $, ${\bf x}^{*} \in \mathcal{X} $ where $ f ^{*}= f({\bf x}^{*})= \min{f(\hat{{\bf x}})} $ for $\hat{{\bf x}} \in \mathcal{X} $ which coincides with $\min{f({\bf x})}$ for $x \in \mathbb{R} ^{N} $, we arrive at

And we have a similar inequality as (\ref{3.29})
\begin{equation}\label{3.96}
\begin{split}
\implies \| & \Pi_{\mathcal{X}} (\overline{{\bf x}_{i}}(k+1) - \overline{{\bf x}_{i}}(k+1) \| ^{2}\leq \\
 & \|\overline{{\bf x}_{i}}(k+1) - {\bf x}^{*} \| ^{2} - a \| \Pi_{\mathcal{X}}(\overline{{\bf x}_{i}}(k+1)) - {\bf x}^{*} \| ^{2}, 
\end{split}
\end{equation}
where $a \leq 1$. 

\begin{comment}
But by Lemma~2, where we have used the substitutions  $ y= \overline{{\bf x}_{i}}(k+1)$, $ x = {\bf v}_{i}(k) $, and $ {\bf x}^{*} \in \mathcal{Y}$ for $\mathcal{Y} =\mathcal{X}$, $\alpha = \alpha_{k} $ and $  \Phi = f $ and $ {\bf z}_{i}(k) =\Pi_{\mathcal{X}}({\bf v}_{i}(k)) \in \mathcal{X} $,
\begin{equation}\label{3.97}
\begin{split}
\| & \overline{{\bf x}_{i}}(k+1)  - {\bf x}^{*}\| ^{2} \leq  \ (1+A_{\eta}\alpha_{k}^{2}) \|{\bf v}_{i}(k) -{\bf x}^{*}\| ^{2} \\
& -2 \alpha_{k} (f^{(i)}({\bf z}_{i}(k))-f^{(i)}({\bf x}^{*})) 
  + (\frac{3}{8 \eta}+2 \alpha_{k} L ) \|{\bf v}_{i}(k)-{\bf z}_{i}(k \|^{2} \\
& \ \ \ \ \ + B_{\eta}\alpha_{k}^{2} \| \nabla{f^{(i)}({\bf x}^{*})}\| ^{2}.
\end{split}
\end{equation} 

And by Lemma~3 and the substitution $ y= \Pi_{\mathcal{X}}(\overline{{\bf x}_{i}}(k+1))$, $ x = {\bf v}_{i}(k) $, and $ {\bf x}^{*} \in \mathcal{Y}$ for $\mathcal{Y} =\mathcal{X}$, $\alpha = \alpha_{k} $ and $  \Phi = f $, we have
\begin{equation}\label{3.98}
\begin{split}
\|\Pi_{\mathcal{X}}( & \overline{{\bf x}_{i}}(k+1))-{\bf x}^{*}\| ^{2} \leq \ (1+A_{\eta}\alpha_{k}^{2}) \|{\bf v}_{i}(k) -{\bf x}^{*}\| ^{2} \\
 &-2 \alpha_{k} (f^{(i)}({\bf z}_{i}(k))-f^{(i)}({\bf x}^{*})) \\
 & +\frac{3a}{4} \| \Pi_{\mathcal{X}}[\overline{{\bf x}_{i}}(k+1)]-{\bf v}_{i}(k)\| ^{2}  \\
 & +(\frac{3}{8 \eta}+2 \alpha_{k} L ) \|{\bf v}_{i}(k)- {\bf z}_{i}(k) \|^{2}+B_{\eta}\alpha_{k}^{2} \| \nabla{f^{(i)}({\bf x}^{*})}\| ^{2}.
\end{split}
\end{equation} 
\end{comment}
Using Lemma~\ref{L3.6} and \ref{L3.7} with the following  substitutions  $ y= \overline{{\bf x}_{i}}(k+1)$, $ x = {\bf v}_{i}(k) $, and $ {\bf x}^{*}=argmin f(x) \in \mathcal{Y}$ for $\mathcal{Y} =\mathcal{X}$, $\alpha = \alpha_{k} $ and $  \Phi = f $ and $ {\bf z}_{i}(k) =\Pi_{\mathcal{X}}({\bf v}_{i}(k)) \in \mathcal{X} $,equation (\ref{3.96}) where $\mathcal{Y} =\mathcal{X}$, we get the following equivalent of \eqref{3.33}

{ \begin{equation}\label{3.99}
\begin{split}
\| & \overline{{\bf x}_{i}} (k+1)- {\bf x}^{*} \| ^{2} \leq \\
 & (1 - a+(A_{\eta} -a A_{\tau})\alpha_{k} ^{2}) \|{\bf v}_{i}(k) -{\bf x}^{*}\| ^{2} \\
 & -2 (1-a) \alpha_{k} (f^{(i)}({\bf z}_{i}(k))-f^{(i)}({\bf x}^{*})) \\
 & +\frac{3a}{4} \| \Pi_{\mathcal{X}}[{\bf v}_{i}(k) - \alpha_{k} \nabla{f({\bf v}_{i}(k))}]-{\bf v}_{i}(k)\| ^{2} \\
 & +(\frac{3}{8 \eta}-\frac{3a}{8 \tau}+ (2-2a) \alpha_{k} L ) \|{\bf v}_{i}(k)- {\bf z}_{i}(k) \|^{2} \\ & +(B_{\eta}- a B_{\tau})\alpha_{k}^{2} \| \nabla{f^{(i)}({\bf x}^{*})}\| ^{2},
\end{split}
\end{equation}}
where $A_{\tau}=8L^{2}+16 \tau L^{2}$, $B_{\tau}=8 \tau + 8 $, $ \tau > 0 $ is arbitrary, $A_{\eta}=2L^{2}+16 \eta L^{2}$, $B_{\eta}=2 \eta + 8 $ and $ \eta > 0 $ is arbitrary. 

But we know that $ {\bf v}_{i}(k) = \sum_{j=1}^{n}[{\bf W}(k)]_{ij}{\bf x}_{j}(k) $ then by the convexity of the norm squared and double stochasticity of $[{\bf W}(k)]_{ij}$, we have by summing from $ i=1 $ to $ n $,
{ \begin{equation}\label{3.100}
\begin{split}
\sum_{i=1}^{n} \| {\bf v}_{i}(k) - {\bf x}^{*} \| ^{2} &  \leq \sum_{i=1}^{n} \sum_{j=1} ^{n} [{\bf W}(k)]_{ij} \| {\bf x}_{j}(k) - {\bf x}^{*} \| ^{2} \\
& \leq \sum_{i=1}^{n} \| {\bf x}_{j}(k) - {\bf x}^{*} \| ^{2}.
\end{split}
\end{equation}}

Following a similar analysis as in the Appendix and taking $ x^{i}= \min f^{(i)}(x) $ for each $ i \in \{1,\ldots,p\} $. Having for $ i \in I $ that $ f^{(i)}(x^{*})  > f^{(i)}(x^{i}) $ and for $ i \in I^{\complement} $ that $ f^{(i)}(x^{*}) \leq f^{(i)}(x^{i}) $ and for $\bar{{\bf z}}(k)=\frac{1}{n}\sum_{i=1}^{n}{\bf z}_{i}(k)$, where $ {\bf z}_{i}(k) =\Pi_{\mathcal{X}}({\bf v}_{i}(k)) \in \mathcal{X} $, we have

% Elaborate more in details here 

{ \begin{equation}\label{3.101}
\begin{split}
\sum_{(i)=1}^{p} & \gamma_{(i)} ( f^{(i)}({\bf z}_{i}(k))  - f^{(i)}( {\bf x}^{*})) \\
& \geq \sum_{(i)=1}^{p} \gamma_{(i)} ( f({\bf z}_{i}(k))- f^{(i)}( \bar{{\bf z}}(k))) + \gamma_{min} ( f(\bar{{\bf z}}(k))- f( {\bf x}^{*}) ) \\
& + \sum_{(i) \in I } \gamma_{(i)} \langle \nabla{f}^{(i)}({\bf x}^{*}), {\bf x}^{(i)}- {\bf x}^{*} \rangle.
\end{split}
\end{equation}}

%\vspace{-0.3cm}
%Then with some manipulations and through using convexity and nonexpansiveness of projections then \eqref{3.101} reduces to 

But
\vspace{-0.9cm}
{ \begin{equation}\label{3.102}
\begin{split}
 \ \ \  \ \  \ \ \ \ \ \sum_{(i)=1}^{p} \gamma_{(i)} & ( f^{(i)}({\bf z}_{i}(k)) - f^{(i)}(\bar{{\bf z}}(k))) \\
& \geq \sum_{i=1}^{p} \gamma_{(i)} \langle \nabla{f^{(i)}(\bar{{\bf z}}(k))} , {\bf z}_{i}(k) - \bar{{\bf z}}(k) \rangle \\
& \geq - \sum_{(i)=1}^{p} \gamma_{(i)}\| \nabla{f^{(i)}(\bar{{\bf z}}(k))} \| \| {\bf z}_{i}(k) - \bar{{\bf z}}(k) \|.
\end{split}
\end{equation}}

Let $ {\bf z}_{i}(k) =\Pi_{\mathcal{X}}({\bf v}_{i}(k)) \in \mathcal{X} $.
Since $ \bar{{\bf z}}(k) $ is a convex combination of $ {\bf z}_{i} (k) \in \mathcal{X} \implies \bar{{\bf z}}(k) \in \mathcal{X} $ and thus $ \| \nabla{f^{(i)}(\bar{{\bf z}}(k))} \| \leq G_{f} $.
{\small \begin{equation}\label{3.103}
\begin{split}
\implies \sum_{(i)=1}^{p}\gamma_{(i)} ( f({\bf z}_{i}(k))- f( \bar{{\bf z}}(k))) \geq - G_{f} \sum_{(i)=1}^{p} \gamma_{(i)} \| {\bf z}_{i}(k) - \bar{{\bf z}}(k) \|. 
\end{split}
\end{equation}}

But
{ \begin{equation*}\label{3.104}
\begin{split}
&  \ \ \ \ \ \ \ \ \  \| {\bf z}_{i} (k)  - \bar{{\bf z}}(k) \|  =  \| \frac{1}{n} \sum_{j=1}^{n} ( {\bf z}_{i}(k) - {\bf z}_{j}(k) ) \|  \\
& \ \ \ \ \leq   \frac{1}{n} \sum_{j=1}^{n} \| {\bf z}_{i}(k) - {\bf z}_{j}(k)  \|  \leq   \frac{1}{n} \sum_{j=1}^{n} \|  \Pi_{\mathcal{X}}({\bf v}_{i}(k)) -  \Pi_{\mathcal{X}}({\bf v}_{j}(k)) \|   \\
& \ \ \ \ \leq   \frac{1}{n} \sum_{j=1}^{n} \| {\bf v}_{i}(k) - {\bf v}_{j}(k)  \|,
\end{split}
\end{equation*}}
where the first inequality follows from the convexity of the norm and the last inequality follows from the non-expansiveness of the projection $ \Pi $.

But by the triangle inequality, we have 
$ \| {\bf v}_{i}(k) - {\bf v}_{l}(k) \| \leq \| {\bf v}_{i}(k) - \bar{{\bf v}}(k) \| + \| {\bf v}_{l}(k) -\bar{{\bf v}}(k) \| $, \\
Thus,
\vspace{-0.5cm}
{ \begin{equation*}\label{3.105}
\begin{split}
\ \ & \ \ \  \ \ \ \  \ \|  {\bf z}_{i}  (k)  - \bar{{\bf z}}(k) \|  \leq   \frac{1}{n} \sum_{l=1}^{n} \| {\bf v}_{i}(k) - {\bf v}_{l}(k)  \|  \\
 & \leq   \frac{1}{n} \sum_{l=1}^{n} \| {\bf v}_{i}(k) - \bar{{\bf v}}(k) \| + \frac{1}{n} \sum_{l=1}^{n} \| {\bf v}_{l}(k) -\bar{{\bf v}}(k) \|  \\
 & \leq   \frac{n}{n} \| {\bf v}_{i}(k) - \bar{{\bf v}}(k) \| + \frac{1}{n} \sum_{l=1}^{n} \| {\bf v}_{l}(k) -\bar{{\bf v}}(k) \|  \\
& \implies \| {\bf z}_{i}(k)  - \bar{{\bf z}}(k) \| \leq \| {\bf v}_{i}(k) - \bar{{\bf v}}(k) \| + \frac{1}{n} \sum_{l=1}^{n} \| {\bf v}_{l}(k) -\bar{{\bf v}}(k) \|.
\end{split}
\end{equation*}}
Then  by summing over $i$, we get 
{ \begin{equation}\label{3.106}
\begin{split}
\sum_{i=1}^{n} \| {\bf z}_{i}(k)  - \bar{{\bf z}}(k) \| &  \leq \sum_{i=1}^{n} \| {\bf v}_{i}(k) - \bar{{\bf v}}(k) \| + \frac{1}{n} \sum_{i=1}^{n} \sum_{l=1}^{n} \| {\bf v}_{l}(k) -\bar{{\bf v}}(k) \|  \\ 
& \leq \sum_{i=1}^{n} \| {\bf v}_{i}(k) - \bar{{\bf v}}(k) \| + \frac{n}{n} \sum_{l=1}^{n} \| {\bf v}_{l}(k) -\bar{{\bf v}}(k) \|  \\ 
& \leq 2 \sum_{i=1}^{n} \| {\bf v}_{i}(k) - \bar{{\bf v}}(k) \|,
\end{split}
\end{equation}}
which follows since indices $i$ and $l$ are arbitrary indexes.

Then, by using (\ref{3.101}), substituting (\ref{3.103}) and (\ref{3.106}) and the Lipschitz continuity of the gradients accordingly, we have

{ \begin{equation}\label{3.107}
\begin{split}
\sum_{(i)=1}^{p} & \gamma_{(i)} ( f^{(i)}({\bf z}_{i}(k))  - f^{(i)}({\bf x}^{*}))  \geq \sum_{(i)=1}^{p} \gamma_{(i)} ( f^{(i)}({\bf z}_{i}(k))- f^{(i)}(\bar{{\bf z}}(k)))\\
& \ \ \ \ + \gamma_{min} ( f(\bar{{\bf z}}(k))- f( {\bf x}^{*}) ) + \sum_{(i) \in I } \gamma_{(i)} \langle \nabla{f}^{(i)}({\bf x}^{*}), {\bf x}^{(i)}- {\bf x}^{*} \rangle\\
& \geq - G_{f} \sum_{(i)=1}^{p} \gamma_{(i)}  \| {\bf z}_{i}(k) - \bar{{\bf z}}(k) \| + \gamma_{min} ( f(\bar{{\bf z}}(k))-  f( {\bf x}^{*}) ) \\
& - \sum_{(i) \in I} \gamma_{(i)} \| \nabla{f}^{(i)}({\bf x}^{*}) \| \| {\bf x}^{(i)} - {\bf x}^{*} \| \\
& \geq - 2 G_{f} \sum_{(i)=1}^{p} \gamma_{(i)} \| {\bf v}_{i}(k) - \bar{{\bf v}}(k) \| + \gamma_{min} ( f(\bar{{\bf z}}(k))- f( {\bf x}^{*}) ) \\
& - \sum_{(i) \in I} \gamma_{(i)} L \| {\bf x}^{(i)}- {\bf x}^{*} \|^{2}.
\end{split}
\end{equation}}

Then using (\ref{3.99})
%and having $ {\bf z}_{i}(k) = \Pi_{\mathcal{X}}({\bf v}_{i}(k)) $, ${\bf x}^{*} \in \mathcal{X} $ where $ f ^{*}= f({\bf x}^{*})= \min{f(\hat{{\bf x}})} $ for $\hat{{\bf x}} \in \mathcal{X} $ which coincides with $\min{f({\bf x})}$ for $x \in \mathbb{R} ^{N} $,
%becomes through summing over all $i$ and
and taking the expectation on the history $\mathcal{F}_{k}$ up to $k$, we have after the consideration the probability $ 0 \leq \gamma_{min} \leq \gamma_{(i)} \leq 1  $ of accessing a partition $ (i) \in \{1,\ldots,p\} $per each server, $ 0 < a < 1$  and $ \sum_{(i)=1}^{p}\leq \sum_{(i)=0}^{p}=1 $ that
{ \begin{equation}\label{3.110.2}
\begin{split}
\mathbb{E} [ \| & \overline{{\bf x}_{i}} (k+1)- {\bf x}^{*} \| ^{2} / \mathcal{F}_{k} ] \leq \\
 & \sum_{(i)=0}^{p} \gamma_{(i)} (1+(A_{\eta}-aA_{\tau})\alpha_{k}^{2}\sum_{j=1}^{n}[{\bf W}(k)]_{ij}\| x_{j}(k) - x^{*} \|^{2}  \\
 & + \sum_{(i)=0}^{p} \gamma_{(i)} (1-a) \| {\bf v}_{i}(k) - {\bf x}^{*} \| ^{2} \\
 % & + 4 (1-a) \alpha_{k} G_{f} \sum_{i=1}^{n}  \| {\bf v}_{i}(k) - \bar{{\bf v}}(k) \| \\
 & -2 \sum_{(i)=1}^{p} \gamma_{(i)} (1-a) \alpha_{k}  ( f^{(i)}({\bf z}_{i}(k))- f^{(i)}( {\bf x}^{*}))  \\
 & + \sum_{(i)=1}^{p} \gamma_{(i)}\frac{3a}{4} \| \Pi_{\mathcal{X}}[{\bf v}_{i}(k) - \alpha_{k} \nabla{f({\bf v}_{i}(k))}]-{\bf v}_{i}(k)\| ^{2} \\
 & + \sum_{(i)=1}^{p} \gamma_{(i)}(\frac{3}{8 \eta}-\frac{3a}{8 \tau}+ (2-2a) \alpha_{k} L ) \|{\bf v}_{i}(k)- {\bf z} \|^{2} \\ & + \sum_{(i)=1}^{p} \gamma_{(i)}(B_{\eta}- a B_{\tau})\alpha_{k}^{2} \| \nabla{f^{(i)}({\bf x}^{*})}\| ^{2} \ \ a.s.
\end{split}
\end{equation}}

But 
%\vspace{-1.2cm}
{\small \begin{equation}\label{3.111}
\begin{split}
 \| {\bf x}_{j}(k) - {\bf x} ^{*} \| & = \| \overline{{\bf x}_{j}}(k) + R_{j}(k-1) - {\bf x}^{*} \|.
\end{split} 
\end{equation}}
\\

And by squaring the norm and applying $2ab \leq a^{2} + b^{2} $, we have 
{\small \begin{equation}\label{3.112}
\begin{split}
 \| {\bf x}_{j}(k) - {\bf x} ^{*} \| ^{2} & \leq 2 \| \overline{{\bf x}_{j}}(k) - {\bf x}^{*} \| ^{2} + 2 \| R_{j}(k-1) \| ^{2}.
\end{split} 
\end{equation}}

Then through summing over all $i$ and having $ [{\bf W}]_{ij} $ doubly stochastic, the substitution of \eqref{3.107} and having  $ 0 \leq \gamma_{min} \leq \gamma_{(i)} \leq 1  $ for $ (i) \in \{1,\ldots,p\} $ and $ 0 < a < 1 $, \eqref{3.110.2} becomes

\begin{comment}
{ \begin{equation}\label{3.113}
\begin{split}
\mathbb{E} & [ \sum_{i=1}^{n} \|  \overline{{\bf x}_{i}} (k+1)- {\bf x}^{*} \| ^{2} / \mathcal{F}_{k} ] \leq \\
 &  \ (1 + (A_{\eta}-a A_{\tau})\alpha_{k} ^{2}) \sum_{i=1}^{n} (\|\overline{{\bf x}_{i}}(k) -{\bf x}^{*}\| ^{2} +  \| R_{j}(k-1) \| ^{2}) \\
 & +  \ (1 -  a) \sum_{i=1}^{n} \|{\bf v}_{i}(k)- {\bf x}^{*}\|^{2} \\
 & + 4 p \gamma_{max} (1-a) \alpha_{k} G_{f}  \sum_{i=1}^{n}  \| {\bf v}_{i}(k) - \bar{{\bf v}}(k) \| \\
 & -2  p \gamma_{min} (1-a) \alpha_{k}  ( f(\bar{{\bf z}}(k))- f( {\bf x}^{*})) + n p \  (B_{\eta} - a B_{\tau} + 2)\alpha_{k}^{2} G_{f} ^{2} \\
& + p \ (\frac{3}{8 \eta} - \frac{3 \gamma_{min} a}{8 \tau} + 2 \alpha_{k}^{2} L^{2} + (2-2a) \alpha_{k} L ) \sum_{i=1}^{n} dist ^{2} ({\bf v}_{i}(k),\mathcal{X}) \ \ a.s.
\end{split}
\end{equation}}
\end{comment}

{\small \begin{equation}\label{3.113a}
\begin{split}
\mathbb{E} & [ \sum_{i=1}^{n} \|  \overline{{\bf x}_{i}} (k+1)- {\bf x}^{*} \| ^{2} / \mathcal{F}_{k} ] \leq \\
 &  \ (1 + (A_{\eta}-a A_{\tau})\alpha_{k} ^{2}) \sum_{i=1}^{n} \|\overline{{\bf x}_{i}}(k) -{\bf x}^{*}\| ^{2} \\
 + & \ (1 -  a) \sum_{i=1}^{n} \|{\bf v}_{i}(k)- {\bf x}^{*}\|^{2} \\
 & + 2(1-a)n \min(|I|\gamma_{max},1)\alpha_{k} L \max_{(i)} \|x^{i}-x^{*}\|^{2} \\
 & + 4  (1-a) \alpha_{k} G_{f}  \sum_{i=1}^{n}  \| {\bf v}_{i}(k) - \bar{{\bf v}}(k) \| \\
 & - 2 \gamma_{min} n (1-a) \alpha_{k}  ( f(\bar{{\bf z}}(k))- f( {\bf x}^{*})) + n \  (B_{\eta} - a B_{\tau} + 2)\alpha_{k}^{2} G_{f} ^{2} \\
& + \ (\frac{3}{8 \eta} - \frac{3 \gamma_{min} a p }{8 \tau} + 2 \alpha_{k}^{2} L^{2} + (2-2a) \alpha_{k} L ) \sum_{i=1}^{n} dist ^{2} ({\bf v}_{i}(k),\mathcal{X}) \\
& (1 + (A_{\eta}-a A_{\tau})\alpha_{k} ^{2})\| R_{j}(k-1) \| ^{2} \ \ a.s.
\end{split}
\end{equation}}

%where $A_{\tau}=8L^{2}+16 \tau L^{2}$, $B_{\tau}=8 \tau + 8 $, $ \tau > 0 $ is arbitrary, $A_{\eta}=2L^{2}+16 \eta L^{2}$, $B_{\eta}=2 \eta + 8 $ and $ \eta > 0 $ is arbitrary. 
%We arrived at \eqref{3.113} by picking $ a > 0 $ (cf. (*) after (\ref{3.115})) which suffice as before such that for $ k > \bar{k} $, we have\\

But we have
{ \begin{equation}\label{3.114a}
\begin{split}
 & (\frac{3}{8 \eta} - \frac{3 \gamma_{min}a p }{8 \tau} +  2 \alpha_{k}^{2} L^{2} + (2-2a) \alpha_{k} L ) \ dist ^{2} ({\bf v}_{i}(k),\mathcal{X}) \\
 & + ( 1 - a ) \sum_{i=1}^{n} \|{\bf v}_{i}(k)- {\bf x}^{*}\|^{2} \leq  \\   & (1-a +\frac{3}{8 \eta} - \frac{3 \gamma_{min}a p }{8 \tau} +  2 \alpha_{k}^{2} L^{2} + (2-2a) \alpha_{k} L ) \sum_{i=1}^{n} \|{\bf v}_{i}(k)- {\bf x}^{*}\|^{2} \leq \\
 & 2(1-a+\frac{3}{8 \eta} - \frac{3 \gamma_{min} a p }{8 \tau} + 2 \alpha_{k}^{2} L^{2} + (2-2a) \alpha_{k} L ) \sum_{i=1}^{n} \|{\bf v}_{i}(k)- {\bf x}^{(i)}\|^{2} \\
 & 2(1-a+\frac{3}{8 \eta} - \frac{3 \gamma_{min} a p }{8 \tau} + 2 \alpha_{k}^{2} L^{2} + (2-2a) \alpha_{k} L ) n \|{\bf x}^{(i)}- {\bf x}^{*} \|^{2}.
\end{split}
\end{equation}}
and

{\bf Applying the Supermartingale Theorem in Lemma~\ref{L3.11}:} Using the above \eqref{3.113a} reduces to

{ \begin{equation}\label{3.113a}
\begin{split}
\mathbb{E} & [ \sum_{i=1}^{n} \|  \overline{{\bf x}_{i}} (k+1)- {\bf x}^{*} \| ^{2} / \mathcal{F}_{k} ] \leq \\
 &  \ (1 + (A_{\eta}-a A_{\tau})\alpha_{k} ^{2}) \sum_{i=1}^{n} \|\overline{{\bf x}_{i}}(k) -{\bf x}^{*}\| ^{2} + 4  (1-a) \alpha_{k} G_{f}  \sum_{i=1}^{n}  \| {\bf v}_{i}(k) - \bar{{\bf v}}(k) \| \\
 & -2 \gamma_{min} (1-a) \alpha_{k}  ( f(\bar{{\bf z}}(k))- f( {\bf x}^{*})) + n \  (B_{\eta} - a B_{\tau} + 2)\alpha_{k}^{2} G_{f} ^{2} \\
& + \ 2(1-a+\frac{3}{8 \eta} - \frac{3 \gamma_{min} a p }{8 \tau} + 2 \alpha_{k}^{2} L^{2} + (2-2a) \alpha_{k} L ) \sum_{i=1}^{n} \min_{(i)} \|{\bf v}_{i}(k)- {\bf x}^{(i)}\|^{2} \\
& + \ 2 n (1-a+\frac{3}{8 \eta} - \frac{3 \gamma_{min} a p }{8 \tau} + 2 \alpha_{k}^{2} L^{2} \\
& \ \ \ \ \ \ \ \ \ \ + (1-a)(2+\min(|I|\gamma_{max},1))\alpha_{k}L)  \max_{(i)}\|{\bf x}^{(i)}- {\bf x}^{*} \|^{2} \\
& (1 + (A_{\eta}-a A_{\tau})\alpha_{k} ^{2})\| R_{j}(k-1) \| ^{2} \ \ a.s.
\end{split}
\end{equation}}

%But we need the terms containing $ \|{\bf v}_{i}(k)- {\bf x}^{i}\|^{2} $ and $ \|{\bf x}^{i}- {\bf x}^{*} \|^{2} $ to be negative.
\begin{comment}
{ \begin{equation}\label{3.114a}
\begin{split}
 (\frac{3}{8 \eta} - \frac{3 \gamma_{min}a p }{8 \tau} + & 2 \alpha_{k}^{2} L^{2} + (2-2a) \alpha_{k} L ) \ dist ^{2} ({\bf v}_{i}(k),\mathcal{X}) \\
 & + (1 -  a) \sum_{i=1}^{n} \|{\bf v}_{i}(k)- {\bf x}^{*}\|^{2} \leq  \\    (1-a +\frac{3}{8 \eta} - \frac{3 \gamma_{min}a p }{8 \tau} + & 2 \alpha_{k}^{2} L^{2} + (2-2a) \alpha_{k} L ) \sum_{i=1}^{n} \|{\bf v}_{i}(k)- {\bf x}^{*}\|^{2} <  0.
\end{split}
\end{equation}}
\end{comment}

Since $ \| {\bf x}^{i}-{\bf x}^{*}\|^{2}$ is bounded, (i.e., Proposition ~\ref{sum_v_i-x^*_type2} or ~\ref{sum_v_i-x^*_type2a}), and $ \|{\bf v}_{i}(k) -{\bf x}^{*}\|^{2} $ is bounded then $ \|{\bf v}_{i}(k) - {\bf x}^{i}\|^{2}$ is bounded, we need $1 +\frac{3}{8\eta}+\epsilon < \frac{3\gamma_{min}ap}{8\tau}+a $. Then for $ \eta > \tau $ where $ \eta =l \tau $ and $ l > 1 $ we get $ a > \frac{8\eta +3}{l(3 \gamma_{min}p+8 \tau)} $. Introducing the same analysis as in Lemma 4 we have a sufficient condition for $ p < \frac{1}{\gamma_{min}} $, Thus, for e.g. for $ l=8 $, $ \gamma_{min} p =2 $, $ \eta = 4 $ and $ \tau =\frac{1}{2} $, we can take $ 1 > a > \frac{7}{18} $. Thus, by choosing $ a = \frac{1}{2} $ then we have for $ k > k_{2} $ that the terms containing $ \|{\bf v}_{i}(k) -{\bf x}^{i}\|^{2} $ and $ \| {\bf x}^{i} - {\bf x}^{8} \|^{2} $ to be negative.

\begin{comment},
And using \eqref{3.67} for $ R_{i}(k-1) $ which is bounded and the bounded $ \sum_{i=1}^{n} \|{\bf v}_{i}(k)-{\bf x}^{*} \|^{2} $ from the Appendix and by taking into consideration the above negative term and knowing that $ \|{\bf x}^{i} - {\bf x}^{*} \| < Q $ since $ {\bf x}^{i} $ and $ {\bf x}^{*} $ are different than $ \mp \infty $ and $ 0 < a < 1 $. But $ \tau > 0 $ so I can choose $ \tau $ very small so that the term in \eqref{3.114a} has a large absolute value, though negative, in a way that it is larger in absolute value than $  2(1-a)|I|  G_{f}\|x^{i}-x^{*}\|  \geq  2(1-a)|I| \alpha_{k} G_{f}\|x^{i}-x^{*}\| $ and thus by canceling the negative term resulting from these two terms, or in fact for a fixed $ \tau $, $ \alpha_{k} $ is decreasing then also we can have the same result. 
\end{comment}

And using the boundedness of $ R_{i}(k-1) $ from the Appendix (\ref{3.113a}) (i.e., Proposition ~\ref{sum_v_i-x^*_type2} or ~\ref{sum_v_i-x^*_type2a}). becomes
\begin{comment}
{\small \begin{equation*}\label{3.114a}
\begin{split}
 & (1+ 2 (A_{\eta}  -a A_{\tau})\alpha_{k} ^{2})  \sum_{i=1}^{n} \|R_{i}(k-1) \| ^{2} \\
 \leq & 4L^{2}n(1+ 2 (A_{\eta}-a A_{\tau}) \alpha_{k} ^{2})  \alpha_{k-1} ^{2} \\
 & \ \ \ \ \ . \ \|{\bf A}^{(i)} \| ^{2} _{\infty}  \| {\bf B}^{(i)} \| _{2, \infty} ^{2}\sum_{k'=k-H}^{k} \sum_{l=1}^{n} \| {\bf v}_{l}(k-1) - {\bf x}^{*} \| ^{2}.
 \end{split}
\end{equation*}}
\end{comment}

%Therefore, 
{ \begin{equation}\label{3.115}
\begin{split}
& \mathbb{E} [ \sum_{i=1}^{n}  \|  \overline{{\bf x}_{i}}  (k+1)- {\bf x}^{*} \| ^{2} / \mathcal{F}_{k} ] \leq  \ (1 + (A_{\eta}-a A_{\tau})\alpha_{k} ^{2}) \sum_{i=1}^{n} \|\overline{{\bf x}_{i}}(k) -{\bf x}^{*}\| ^{2} \\
 & + (1 +  (A_{\eta}-a A_{\tau}) \alpha_{k} ^{2})  \alpha_{k-1} ^{2} n F 
 % & + ( \frac{1}{2} -  \gamma_{min} a) \sum_{i=1}^{n} \| {\bf v}_{i}(k) - {\bf x}^{*} \| ^{2}  \\
+ 4  (1-a) \alpha_{k} G_{f} \sum_{i=1}^{n}  \| {\bf v}_{i}(k) - \bar{{\bf v}}(k) \| \\
 & - 2  \gamma_{min} (1-a) \alpha_{k} ( f(\bar{{\bf z}}(k))- f( {\bf x}^{*})) \\
 & + n \ (B_{\eta} - a B_{\tau} + 2 )\alpha_{k}^{2} G_{f} ^{2} \ \ a.s.
\end{split}
\end{equation}}

But $ \sum_{k=0}^{\infty} ( A_\eta- a A_{\tau}) \alpha_{k}^{2} < \infty $ since $ \sum_{k=0}^{\infty}\alpha_{k}^{2} < \infty $.

Also, $ \sum_{k=0}^{\infty}n(B_{\eta}-a B_{\tau})\alpha_{k}^{2}G_{f}^{2} < \infty $ similarly. \\
And, $ \sum_{k=0}^{\infty} 4  (1-a) \alpha_{k} G_{f} \sum_{i=1}^{n}  \| {\bf v}_{i}(k) - \bar{{\bf v}}(k) \| < \infty $ from Lemma~6 holding.
Therefore, the supermartingale theorem applies. Hence, the sequence $\{ \| \overline{{\bf x}}_{i}(k)-{\bf x}^{*} \|^{2} \}$ is convergent a.s. to a nonnegative random variable for any $ i \in V $ and $ {\bf x} ^{*} \in \mathcal{X}^{*} $ where $\mathcal{X}^{*} = \{ x \in \mathcal{X} | f({\bf x}) = \min_{x \in \mathcal{X}}f({\bf x}) \} $. But $ u_{k} = (1-a) \gamma_{min}\alpha_{k}(f(\bar{{\bf z}}(k))- f( {\bf x}^{*})) >0 $ with negative coefficient $ -2 < - 1$ since $ 0 < a = \frac{1}{2 \gamma_{min}} < 1 $ and $ f({\bf x}^{*}) = \min f(x) $.
And the theorem also implies that $\sum_{k=0}^{\infty} u_{k} = \sum_{k=0}^{\infty} (1-a)\gamma_{min} \alpha_{k}  ( f(\bar{{\bf z}}(k))- f( {\bf x}^{*})) \leq \infty $. This with the condition that, $\sum_{k=0}^{\infty}\alpha_{k} = \infty $ and $ f(\bar{{\bf z}}(k))- f( {\bf x}^{*})) \geq  0 $ imply

{ \begin{equation}\label{3.117}
\begin{split}
\lim_{k \to \infty} \inf( f(\bar{{\bf z}}(k))- f({\bf x}^{*})) = 0 \ \
a.s. 
\end{split}
\end{equation}}
And since $ f(\bar{{\bf z}}(k))- f({\bf x}^{*}) \geq 0 $ for all $ k $ since $ f({\bf x}^{*})=\min f({\bf x}) $ then
{ \begin{equation}\label{3.117a}
\begin{split}
\lim_{k \to \infty} f(\bar{{\bf z}}(k))= f({\bf x}^{*})   \ \
a.s. 
\end{split}
\end{equation}}

%\vspace{-0.3cm}
% to the limited space we refer you to \citep{lee2013distributed} for details and thus arrive at 

By lemma~\ref{L3.5}, we have
$ \sum_{k=0}^{\infty} dist^{2}({\bf v}_{i}(k),\mathcal{X}) < \infty $,
i.e., $ \sum_{k=0}^{\infty} \| {\bf v}_{i}(k) -{\bf z}_{i}(k) \| ^{2} < \infty $ where $ {\bf z}_{i}(k) = \Pi_{\mathcal{X}}({\bf v}_{i}(k)) $.

\begin{equation}\label{3.118}
\begin{split}
\implies \lim_{k \to \infty}\| {\bf v}_{i}(k) -{\bf z}_{i}(k) \| \to 0.
\end{split}
\end{equation}

\begin{comment}
Recall that the sequence  $ \{ \| \overline{{\bf x}}_{i}(k)-{\bf x}^{*} \| \} $ is convergent a.s for all $ i \in V $ and every $ {\bf x} ^{*} \in \mathcal{X}^{*} $ (i.e., the last proof in this section.) \\
Then in view of (\ref{3.100}) variant where $ {\bf v}_{i}(k) = \sum_{j=1}^{n}[{\bf W}(k)]_{ij}(\overline{{\bf x}}_{j}(k) + R_{j}(k-1))$, we have
{ \begin{equation}\label{3.119}
\begin{split}
\sum_{i=1}^{n} \| {\bf v}_{i}(k) - {\bf x}^{*} \| ^{2} \leq \sum_{i=1}^{n} \sum_{j=1}^{n} [{\bf W}(k)]_{ij} \| \overline{{\bf x}}_{j}(k) + R_{j}(k-1) - {\bf x}^{*} \| ^{2} \\
\leq \sum_{i=1}^{n} \| \overline{{\bf x}}_{i}(k) + R_{i}(k-1) - {\bf x}^{*} \| ^{2} \\
\implies \sum_{i=1}^{n} \| {\bf v}_{i}(k) - {\bf x}^{*} \| ^{2} \leq 2 \sum_{i=1} \| \overline{{\bf x}}_{i}(k) - {\bf x}^{*} \| ^{2} + 2 \sum_{i=1}^{n} \| R_{i}(k-1) \| ^{2}.
\end{split}
\end{equation}}
Then in view of the last result and the boundedness of $ \sum_{i=1}^{n}dist^{2}({\bf v}_{i}(k),\mathcal{X}) $, $ \{ \| \overline{{\bf x}}_{i}(k)-{\bf x}^{*} \| \} $ is convergent a.s for all $ i \in V $, every $ {\bf x} ^{*} \in \mathcal{X}^{*} $, (\ref{3.66}) and (\ref{3.118}), we have the sequence $ \{ \|{\bf v}_{i}(k) - {\bf x}^{*} \| \} $ is also convergent a.s. for all $ i \in V $ and $ {\bf x} ^{*} \in \mathcal{X}^{*} $. (i.e., from the above inequality and summing from $ k=0 $ till $ \infty $, we have the LHS is bounded.)
\end{comment}

But we have the sequence $ \{ \|{\bf v}_{i}(k) - {\bf x}^{*} \| \} $ is also convergent a.s. for all $ i \in V $ and $ {\bf x} ^{*} \in \mathcal{X}^{*} $.
By (\ref{3.118}) it follows that $ \{ \| {\bf z}_{i}(k) - {\bf x}^{*} \| \} $ is also convergent a.s for all $ i \in V $ and $ {\bf x} ^{*} \in \mathcal{X}^{*} $. But since $ \| \bar{{\bf v}}(k) - {\bf x} ^{*} \| \leq \frac{1}{n} \sum_{i=1}^{n} \| {\bf v}_{i}(k) - {\bf x} ^{*} \| $ and the sequence $ \{ \| {\bf v}_{i}(k) - {\bf x} ^{*} \| \} $ is convergent a.s for all $ i \in V $ and $ {\bf x} ^{*} \in \mathcal{X}^{*} $, it follows that $ \{ \| \bar{{\bf v}}(k) - {\bf x} ^{*} \| \} $ is convergent a.s for all $ {\bf x} ^{*} \in \mathcal{X}^{*} $. Using a similar argument, we can conclude that $ \{ \| \bar{{\bf z}}(k) - {\bf x} ^{*} \| \} $ is convergent a.s for all $ {\bf x} ^{*} \in \mathcal{X} ^{*} $. As a particular consequence, it follows that the sequences $ \{ \bar{{\bf v}}(k) \} $ and $ \{ \bar{{\bf z}}(k) \} $ are a.s. bounded and, hence they have accumulation points.

%{\bf The accumulation point exists:} and according to {\bf Assumption 1} is unique. \\
From (\ref{3.117a}) and the continuity of $f$, it follows that the sequence $ \bar{{\bf z}}(k) $  must have one accumulation point in the set $ \mathcal{X} ^{*} $ a.s.
This and the fact that $ \{ \| \bar{{\bf z}}(k) - {\bf x} ^{*} \| \} $ is convergent a.s for every $ {\bf x} ^{*} \in \mathcal{X} ^{*} $ imply that for a random point $ {\bf x} ^{*} \in \mathcal{X}^{*} $ (from (\ref{3.117})) \\
\begin{equation}\label{3.120}
\begin{split}
\lim_{k \to \infty} \bar{{\bf z}}(k) = {\bf x} ^{*} \ \ a.s.
\end{split}
\end{equation}

Now, from $ \bar{{\bf z}}(k) = \frac{1}{n} \sum_{l=1}^{n} z_{l}(k) $ and $ \bar{{\bf v}}(k) = \frac{1}{n} \sum_{l=1}^{n} {\bf v}_{l}(k) $ and using (\ref{3.118}) ($ \lim_{k \to \infty} \| {\bf v}_{i}(k) - {\bf z}_{i}(k) \| = 0 $ for all $ i \in V $)  and the convexity of the norm, we obtain that
\begin{equation}\label{3.121}
\begin{split}
\lim_{k \to \infty} \| \bar{{\bf v}}(k) - \bar{{\bf z}}(k) \| \leq  \frac{1}{n}  \sum_{l=1}^{n} \| {\bf v}_{l}(k) - z_{l}(k) \| = 0  \ \ a.s. 
\end{split}
\end{equation}

In view of (\ref{3.120}), it follows that 
\begin{equation}\label{3.122}
\begin{split}
\lim_{k \to \infty} \bar{{\bf v}}(k) = {\bf x} ^{*} \ \  a.s. 
\end{split}
\end{equation}
By $ \sum_{k=0}^{\infty} \alpha_{k} \| {\bf v}_{i}(k) - \bar{{\bf v}}(k) \| < \infty $ for all $ i \in V $ in Lemma~\ref{L3.9}, since $ \sum_{k=0}^{\infty} \alpha_{k} = \infty $, we have $ \lim_{k \to \infty} \inf{\|{\bf v}_{i}(k)-\bar{{\bf v}}(k) \|} = 0 $ for all $ i \in V $ a.s.\\
This fact or the fact that $ \{ \| {\bf v}_{i}(k) - {\bf x}^{*} \| \} $ is convergent a.s. for all $ i \in V $ together with the above limit equality (\ref{3.122}), a consequence of (\ref{3.120}), imply that
\begin{equation}\label{3.123}
\begin{split}
\lim_{k \to \infty} \| {\bf v}_{i}(k) - {\bf x}^{*} \| = 0, 
\end{split}
\end{equation}
for all $ i \in V $ a.s.

Finally, from $ \sum_{k=0}^{\infty} \|\epsilon_{i}(k)\|^{2} < \infty $ for all $ i \in V $ in Lemma 5 (i.e., Lemma 7 in \citep{lee2013distributed}), we thus have
\begin{equation}\label{3.124}
\begin{split}
\lim_{k \to \infty} \| {\bf x}_{i}(k+1) - {\bf v}_{i}(k) \| = 0,\
\end{split}
\end{equation}
for all $ i \in V $ a.s. where ${\bf \epsilon}_{i}(k) = {\bf x}_{i}(k+1) - {\bf v}_{i}(k) $, i.e., $ \sum_{k=0}^{\infty} \|{\bf \epsilon}_{i}(k) \| ^{2} = \sum_{k=0}^{\infty} \| {\bf x}_{i}(k+1) - {\bf v}_{i}(k) \| < \infty $. This implies that $ \lim_{k \to \infty} \| {\bf x}_{i}(k+1) - {\bf v}_{i}(k) \| = 0 $ which together with (\ref{3.123}) (i.e., (\ref{3.124}) and (\ref{3.123})) imply that

{ \begin{equation}\label{3.125}
\begin{split}
\lim_{k \to \infty} {\bf x}_{i}(k) = {\bf x} ^{*}, 
\end{split}
\end{equation}}
for all $ i \in V $ a.s. \\

Hence, the result of Lemma~\ref{L3.11} follows. \qed
\end{proof}

%Therefore, through applying \textbf{Lemma~\ref{L3.5}, \ref{L3.9} and \ref{L3.11}} in this order we are able to prove the convergence of SRDO algorithm.

%\hl{To be added at this point is the convergence rate analysis and simulation. It should be about 2 pages in length.}

\section{Convergence Rate}

We are going to show more explicitly with details how inequality \eqref{ineq_general_error_bound} in the case where at least for one $(i)$ we have $ f^{(i)}({\bf x}^{(i)}) < f^{(i)}({\bf x}^{*}) $ in the case of strongly convex $ f^{(i)} $ for all $ (i) $.
In this case which corresponds to Lemma~\ref{type2_error_bound} and before reaching the explicit result of \eqref{ineq_type2_error_bound_subst} we are going to use \eqref{type2_error_bound_no_subst}.
Now, 
\begin{equation}\label{result1}
\begin{split}
2 \alpha_{k} \sum_{j=1}^{n} & \sum_{(i) \in I} \gamma_{(i)} (f^{(i)}({\bf x}^{*}) - f^{(i)}({\bf x}^{(i)}))  \\
& \leq 2 \alpha_{k} \sum_{j=1}^{n}\sum_{(i) \in I} \gamma_{(i)}\langle \nabla{f}^{(i)}({\bf x}^{*}) - \nabla{f}^{(i)}({\bf x}^{(i)}), {\bf x}^{*} - {\bf x}^{(i)} \rangle \\
& \leq 2 \alpha_{k} \sum_{j=1}^{n}\sum_{(i) \in I} \gamma_{(i)} L \| {\bf x}^{(i)} - {\bf x}^{*} \|^{2} \\ 
& \leq 2 n \alpha_{k} L \min(|I|\gamma_{max},1) \max_{(i)} \| {\bf x}^{(i)} - {\bf x}^{*} \|^{2} \\
\end{split}
\end{equation}

\begin{comment}
And we have
\begin{equation}\label{ineq_descent}
\begin{split}
f^{(i)} &({\bf x}^{(i)})  - f^{(i)}({\bf v}_{j}(k)) \\
& \leq \langle \nabla{f}^{(i)}({\bf v}_{j}(k)), {\bf x}^{(i)} - {\bf v}_{j}(k)\rangle + \frac{\sigma_{(i)}}{2}\|{\bf x}^{(i)} - {\bf v}_{j}(k) \|^{2}
\end{split}
\end{equation}
\end{comment}
 And we have from the strong convexity of $ f^{(i)} $ that
\begin{equation}\label{strong_conv1}
\begin{split}
 f^{(i)} & ({\bf v}_{j}(k)) \\
& \geq f^{(i)}({\bf x}^{(i)}) +\langle \nabla{f}^{(i)}({\bf x}^{(i)}),{\bf v}_{j}(k)-{\bf x}^{(i)}\rangle + \frac{\sigma_{(i)}}{2}\|{\bf x}^{(i)} - {\bf v}_{j}(k) \|^{2}
\end{split}
\end{equation}

\begin{comment}
But 
\begin{equation}
\begin{split}
& \langle \nabla{f}^{(i)}({\bf x}^{(i)}) - \nabla{f}^{(i)}({\bf v}_{j}(k)), {\bf x}^{(i)} - {\bf v}_{j}(k) \rangle \\
& \geq \sigma_{(i)} \|{\bf x}^{(i)} - {\bf v}_{j}(k) \|^{2}
\end{split}
\end{equation}
then
\begin{equation}
\begin{split}
& \langle  \nabla{f}^{(i)}({\bf v}_{j}(k)), {\bf x}^{(i)} - {\bf v}_{j}(k)\rangle \\
& \leq - \sigma_{(i)} \|{\bf x}^{(i)} - {\bf v}_{j}(k) \|^{2}
\end{split}
\end{equation}
\end{comment}

Thus, \eqref{strong_conv1} becomes
\begin{equation}\label{result2}
\begin{split}
f^{(i)} &({\bf x}^{(i)})  - f^{(i)}({\bf v}_{j}(k)) \\
& \leq - \frac{\sigma_{(i)}}{2}\|{\bf x}^{(i)} - {\bf v}_{j}(k) \|^{2}
\end{split}
\end{equation}

And since
\begin{equation}
\begin{split}
\| {\bf x}^{(i)} - {\bf v}_{j}(k) \| \geq \| {\bf x}^{(i)} - {\bf x}^{*} \| - \| {\bf x}^{*} - {\bf v}_{j}(k) \|
\end{split}
\end{equation}
then 
\begin{equation}
\begin{split}
\| {\bf x}^{(i)} - {\bf v}_{j}(k) \|^{2} & \geq \| {\bf x}^{(i)} - {\bf x}^{*} \|^{2} + \| {\bf x}^{*} - {\bf v}_{j}(k) \|^{2}  \\
& - 2 \| {\bf x}^{(i)} - {\bf x}^{*} \| \| {\bf x}^{*} - {\bf v}_{j}(k) \|
\end{split}
\end{equation}
Thus,
\begin{equation}
\begin{split}
f^{(i)} &({\bf x}^{(i)})  - f^{(i)}({\bf v}_{j}(k)) \\
& \leq - \frac{\sigma_{(i)}}{2} \| {\bf x}^{(i)} - {\bf v}_{j}(k) \|^{2} \\
& \leq - \frac{\sigma_{(i)}}{2} \| {\bf x}^{(i)} - {\bf x}^{*} \|^{2} - \frac{\sigma_{(i)}}{2} \| {\bf x}^{*} - {\bf v}_{j}(k) \|^{2} \\
& + \sigma_{(i)} \| {\bf x}^{(i)} - {\bf x}^{*} \| \| {\bf x}^{*} - {\bf v}_{j}(k) \| 
\end{split}
\end{equation}

Then from convergence lemma (i.e., Lemma~\ref{L3.11}) since $ \| {\bf x}^{*} - {\bf v}_{j}(k) \|^{2} \leq \| {\bf x}^{(i)} - {\bf x}^{*} \|^{2} $ for $ k \geq k_{c} $ we have
\begin{equation}
\begin{split}
f^{(i)} &({\bf x}^{(i)})  - f^{(i)}({\bf v}_{j}(k)) \\
 & \leq  \frac{\sigma_{(i)}}{2} \| {\bf x}^{(i)} - {\bf x}^{*} \|^{2} - \frac{\sigma_{(i)}}{2} \| {\bf x}^{*} - {\bf v}_{j}(k) \|^{2} 
\end{split}
\end{equation}
Therefore,
\begin{equation}\label{result3}
\begin{split}
 & \sum_{j=1}^{n} \sum_{(i)=1}^{p} \gamma_{(i)} (f^{(i)} ({\bf x}^{(i)})  - f^{(i)}({\bf v}_{j}(k))) \\
 & \leq  p \gamma_{min} n (\frac{\sigma_{max}}{2} \max_{(i)} \| {\bf x}^{(i)} - {\bf x}^{*} \|^{2} - \frac{\sigma_{min}}{2} \| {\bf x}^{*} - {\bf v}_{j}(k) \|^{2}) 
\end{split}
\end{equation}
But we have from quadratic inequality due to convexity of $ f^{(i)} $ that
{ \begin{equation}\label{strong_conv1}
\begin{split}
 f^{(i)} ({\bf v}_{j}(k)) \leq f^{(i)}({\bf x}^{(i)}) +\langle \nabla{f}^{(i)}({\bf x}^{(i)}),{\bf v}_{j}(k)-{\bf x}^{(i)}\rangle + \frac{L}{2}\|{\bf x}^{(i)} - {\bf v}_{j}(k) \|^{2}
\end{split}
\end{equation}}
then
\begin{equation}
\begin{split}
f^{(i)} & ({\bf v}_{j}(k)) - f^{(i)} ({\bf x}^{(i)}) \\
 & \leq \frac{L}{2} \| {\bf v}_{j}(k) - {\bf x}^{(i)} \| ^{2} \\
& \leq L \| {\bf x}^{(i)} - {\bf x}^{*} \|^{2} +  L \| {\bf x}^{*} - {\bf v}_{j}(k) \|^{2} 
\end{split}
\end{equation}

then
\begin{equation}\label{result4}
\begin{split}
& \sum_{j=1}^{n} \sum_{(i)=1}^{p} \gamma_{(i)} ( f^{(i)}({\bf v}_{j}(k)) - f^{(i)} ({\bf x}^{(i)}) ) \\
 & \leq   L n \min (p \gamma_{max}, 1) ( \| {\bf x}^{(i)} - {\bf x}^{*} \|^{2} +  \| {\bf x}^{*} - {\bf v}_{j}(k) \|^{2})
\end{split}
\end{equation}

Using \eqref{ineq_type2_error_most_explicit_strong_convex} and evaluating the fourth term by \eqref{result1} and using \eqref{result3} to evaluate the first part of the fifth term and \eqref{result4} to evaluate the second part we get

\begin{comment}
{\small \begin{equation}
\begin{split}
    & \sum_{l=1}^{n}\|  {\bf v}_{l}(k+1) - {\bf x}^{*}\|^{2}  \leq \sum_{j=1}^{n}\|{\bf v}_{j}(k)- {\bf x}^{*}\|^{2}  \\
   & + 2 \alpha_{k}\sum_{j=1}^{n}\sum_{(i)=1}^{p}\gamma_{(i)} \langle {\bf \epsilon}_{j,(i)}(k), {\bf v}_{j}(k) - {\bf x}^{*} \rangle  \\
   & + 2 \alpha_{k}^{2} \frac{\sum_{j=1}^{n}\sum_{(i)=1}^{p}\gamma_{(i)}}{(\sum_{(i)=1}^{p}\gamma_{(i)})^{2}}\|{\bf \epsilon}_{j,(i)}(k)\|^{2}\\
   &  + 2 n \alpha_{k} L \min(|I|\gamma_{max},1) \max_{(i)} \| {\bf x}^{(i)} - {\bf x}^{*} \|^{2} \\
    & + (2\alpha_{k} p \gamma_{min} n (\frac{\sigma_{max}}{2} \max_{(i)} \| {\bf x}^{(i)} - {\bf x}^{*} \|^{2} - \frac{\sigma_{min}}{2} \| {\bf x}^{*} - {\bf v}_{j}(k) \|^{2}) \\
    & + \frac{2 \alpha_{k}^{2}}{(\alpha_{k} - \frac{\alpha_{k}^{2}L}{2})(\sum_{(i)=1}^{p}\gamma_{(i)})^{2}})2 L n \min (p \gamma_{max}, 1) ( \| {\bf x}^{(i)} - {\bf x}^{*} \|^{2} +  \| {\bf x}^{*} - {\bf v}_{j}(k) \|^{2})
\end{split}
\end{equation}}
\end{comment}

{\small \begin{equation}\label{ineq_type2_error_most_explicit_strong_convex}
\begin{split}
    & \sum_{l=1}^{n}\|  {\bf v}_{l}(k+1) - {\bf x}^{*}\|^{2}  \leq ( 1 - \mu - \alpha_{k} p \gamma_{min} n \sigma_{min} \\
    & +\frac{2L n \min (p \gamma_{max}, 1) \alpha_{k}^{2}}{(\sum_{(i)=1}^{p}\gamma_{(i)})^{2}} ) \sum_{j=1}^{n}\|{\bf v}_{j}(k)- {\bf x}^{*}\|^{2}  \\
   & + 2 \alpha_{k}\sum_{j=1}^{n}\sum_{(i)=1}^{p}\gamma_{(i)} \langle {\bf \epsilon}_{j,(i)}(k), {\bf v}_{j}(k) - {\bf x}^{*} \rangle  \\
   & + 2 \alpha_{k}^{2} \frac{\sum_{j=1}^{n}\sum_{(i)=1}^{p}\gamma_{(i)}}{(\sum_{(i)=1}^{p}\gamma_{(i)})^{2}}\|{\bf \epsilon}_{j,(i)}(k)\|^{2}\\
   &  + (2 n \alpha_{k} L \min(|I|\gamma_{max},1) + \alpha_{k} p \gamma_{min} n  \sigma_{max} \\ 
   & + \frac{2L n \min (p \gamma_{max}, 1) \alpha_{k}^{2}}{(\sum_{(i)=1}^{p}\gamma_{(i)})^{2}} ) \max_{(i)} \| {\bf x}^{(i)} - {\bf x}^{*} \|^{2}
\end{split}
\end{equation}}

evaluate $ {\bf \epsilon}_{j,(i)}(k) $ using Definition 1 and \eqref{3.67a} to get 

{ \begin{equation}\label{ineq_type2_error_most_explicit_strong_convex}
\begin{split}
    & \sum_{l=1}^{n}\|  {\bf v}_{l}(k+1) - {\bf x}^{*}\|^{2}  \leq (1 - \mu - \alpha_{k} p \gamma_{min} n \sigma_{min} \\
    & + \frac{2L n \min (p \gamma_{max}, 1) \alpha_{k}^{2}}{(\sum_{(i)=1}^{p}\gamma_{(i)})^{2}} )  \sum_{j=1}^{n}\|{\bf v}_{j}(k)- {\bf x}^{*}\|^{2}  \\
   & + 4 L \alpha_{k}\sum_{j=1}^{n} \| {\bf A}^{(i)} \| _{\infty} \|{\bf B}^{(i)}\| _{2,\infty} \max_{k-H \leq \hat{k} \leq k ; q \in V} \| {\bf v}_{q}(\hat{k}) -  x^{*}\| ^{2}   \\
   & + 8 L ^{2} \alpha_{k}^{2} \frac{\sum_{j=1}^{n} \| {\bf A}^{(i)} \| ^{2} _{\infty} \|{\bf B}^{(i)}\| ^{2} _{2,\infty} \max_{k-H \leq \hat{k} \leq k ; q \in V} \| {\bf v}_{q}(\hat{k}) -  x^{*}\| ^{2} }{(\sum_{(i)=1}^{p}\gamma_{(i)})^{2}} \\
   &  + (2 n \alpha_{k} L \min(|I|\gamma_{max},1) + \alpha_{k} p \gamma_{min} n  \sigma_{max} \\ 
   & +\frac{2L n \min (p \gamma_{max}, 1) \alpha_{k}^{2}}{(\sum_{(i)=1}^{p}\gamma_{(i)})^{2}} ) \max_{(i)} \| {\bf x}^{(i)} - {\bf x}^{*} \|^{2}
\end{split}
\end{equation}}
Then in order to use Lemma Martingale 2 at least one of these three inequalities must be satisfied:
\begin{itemize}
    \item $\frac{2L n \min (p \gamma_{max}, 1) \alpha_{k}^{2}}{(\sum_{(i)=1}^{p}\gamma_{(i)})^{2}} \leq \alpha_{k} p \gamma_{min} n \sigma_{min} $
    \item $\frac{2L n \min (p \gamma_{max}, 1) \alpha_{k}^{2}}{(\sum_{(i)=1}^{p}\gamma_{(i)})^{2}} \leq \mu $
    \item  $\frac{2L n \min (p \gamma_{max}, 1) \alpha_{k}^{2}}{(\sum_{(i)=1}^{p}\gamma_{(i)})^{2}} \leq \mu + \alpha_{k} p \gamma_{min} n \sigma_{min}$ 
\end{itemize}
Thus, one way to tackle this problem is to adjust a lower bound on $ \mu $ by adequately choosing the described row stochastic matrices $ {\bf W}(k) $.
But fortunately enough due to the strong convexity of the function and the existence of a lower bound on the monotonicity of the gradient of $ f^{(i)} $ which is $ \sigma_{(i)} $ the first inequality is satisfied for any decreasing sequence $ \alpha_{k} $. Thus, eventually for $ k \geq k_{0} $ an inequality in the form \eqref{eqw1} is valid. And having it satisfied until the at least the first $ B $ iterations after $ k_{0} $ will result in applying the Martingale 2 result and thus bounding $ \sum_{j=1}^{n} \| {\bf v}_{j}(k) - {\bf x}^{*} \| ^{2} \leq \eta $ where

{ 
\begin{equation}\label{eta_explicit_strongly_convex_type2}
\begin{split}
\eta = \frac{ ( 2 n \alpha_{k} L \min(|I|\gamma_{max},1) + \alpha_{k} p \gamma_{min} n \sigma_{max} + \frac{2L n \min (p \gamma_{max}, 1) \alpha_{k}^{2}}{(\sum_{(i)=1}^{p}\gamma_{(i)})^{2}} ) \max_{(i)} \| {\bf x}^{(i)} - {\bf x}^{*} \|^{2} }{ \mu - \frac{4 L \alpha_{k} \| {\bf A}^{(i)} \| _{\infty} \|{\bf B}^{(i)}\| _{2,\infty} ( 1 + 2 L \alpha_{k}  \| {\bf A}^{(i)} \|  _{\infty} \|{\bf B}^{(i)}\|  _{2,\infty} )} {(\sum_{(i)=1}^{p}\gamma_{(i)})^{2}}}  
\end{split}
\end{equation}}

%: i) allowed number of stragglers connection, ii) varying network topology with more than the allowed number of stragglers in some iterations, using delayed gradient instants, iii) varying network topology with more than the allowed number of stragglers in some iterations, using only that iteration local gradients send from only the connected nodes.

\begin{comment}
and even, iv) network topologies where each local gradient is not necessary calculated using all subsets at each node but rather a random sub-collection of sets.
We even show that dependent on the behavior of the sequence $ \alpha_{k} $ we can adjust the convergence of our algorithm for better behavior in different network topologies. Keeping in mind that a choice of $ \alpha_{k} \tilde{=} \frac{1}{k} $ allows convergence while faster rates can be accomplished at $ \alpha_{k} \tilde{=} \frac{1}{k^{n}} $ where $ n $ becomes smaller, on condition it doesn't enter the algorithm divergence region. Hypothetically and backed up with concrete simulation the value $\alpha_{k}= \frac{1}{(\frac{2}{k}+k)^{\frac{\mu}{12.35}}} $ makes our method behave optimally. \\
Meanwhile, we provide in this paper the proof of convergence of our proposed method for centralized networks of type $(i)$, robust to allowed stragglers.
\end{comment}

Then using \eqref{3.115} from the third part of the convergence analysis and the strong convexity of $ f $ resulting from the strong convexity of $ f^{(i)} $ we get
{ \begin{equation}
\begin{split}
& \mathbb{E} [ \sum_{i=1}^{n}  \|  \overline{{\bf x}_{i}}  (k+1)- {\bf x}^{*} \| ^{2} / \mathcal{F}_{k} ] \leq  \ (1 + (A_{\eta}-a A_{\tau})\alpha_{k} ^{2}) \sum_{i=1}^{n} \|\overline{{\bf x}_{i}}(k) -{\bf x}^{*}\| ^{2} \\
 & + (1 +  (A_{\eta}-a A_{\tau}) \alpha_{k} ^{2})  \alpha_{k-1} ^{2} n F 
 % & + ( \frac{1}{2} -  \gamma_{min} a) \sum_{i=1}^{n} \| {\bf v}_{i}(k) - {\bf x}^{*} \| ^{2}  \\
+ 4  (1-a) \alpha_{k} G_{f} \sum_{i=1}^{n}  \| {\bf v}_{i}(k) - \bar{{\bf v}}(k) \| \\
 & - \sigma  \gamma_{min} (1-a) \alpha_{k} \| \bar{{\bf z}(k)} -  {\bf x}^{*} \| ^{2} \\
 & + n \ (B_{\eta} - a B_{\tau} + 2 )\alpha_{k}^{2} G_{f} ^{2} 
\end{split}
\end{equation}}
But since $ \|\Pi_{\mathcal{X}}({\bf v}_{i}(k))- x^{*}\|  \geq \| {\bf v}_{i}(k) - {\bf x}^{*} \| - \|\Pi_{\mathcal{X}}({\bf v}_{i}(k))- {\bf v}_{i}(k) \| $, $ \|\Pi_{\mathcal{X}}({\bf v}_{i}(k))- {\bf v}_{i}(k) \| \leq \| {\bf v}_{i}(k) - {\bf x}^{*} \| $ and the convexity of the norm we get
{ \begin{equation}
\begin{split}
& \mathbb{E} [ \sum_{i=1}^{n}  \|  \overline{{\bf x}_{i}}  (k+1)- {\bf x}^{*} \| ^{2} / \mathcal{F}_{k} ] \leq  \ (1 + (A_{\eta}-a A_{\tau})\alpha_{k} ^{2}) \sum_{i=1}^{n} \|\overline{{\bf x}_{i}}(k) -{\bf x}^{*}\| ^{2} \\
 & + (1 +  (A_{\eta}-a A_{\tau}) \alpha_{k} ^{2})  \alpha_{k-1} ^{2} n F 
 % & + ( \frac{1}{2} -  \gamma_{min} a) \sum_{i=1}^{n} \| {\bf v}_{i}(k) - {\bf x}^{*} \| ^{2}  \\
+ 16  (1-a) \alpha_{k} G_{f} \sum_{i=1}^{n}  \| {\bf v}_{i}(k) - {\bf x}^{*} \| ^{2} \\
 & - \frac{\sigma}{n}  \gamma_{min} (1-a) \alpha_{k} \sum_{i=1}^{n} \|\Pi_{\mathcal{X}}({\bf v}_{i}(k))- {\bf v}_{i}(k) \|  ^{2} \\
 & + \frac{\sigma}{n}  \gamma_{min} (1-a) \alpha_{k} \sum_{i=1}^{n} \| {\bf v}_{i}(k) - {\bf x}^{*} \|  ^{2} \\
 & + n \ (B_{\eta} - a B_{\tau} + 2 )\alpha_{k}^{2} G_{f} ^{2} 
\end{split}
\end{equation}}

Using $ \| \bar{{\bf x}_{i}(k)} - {\bf x}^{*} \|^{2} = \| {\bf v}_{i}(k-1) - {\bf x}^{*} \|^{2} + 2\alpha_{k} \langle \nabla{f}^{(i)}({\bf v}_{i}(k-1)), {\bf x}^{*} - {\bf v}_{i}(k-1) \rangle $ and $ f({\bf x}^{*})- f({\bf v}_{i}(k-1)) \leq - \sigma \alpha_{k} \| {\bf v}_{i}(k-1) - {\bf x}^{*} \|^{2} $, we get
{ \begin{equation}
\begin{split}
 & \mathbb{E} [ \sum_{i=1}^{n}  \|  \overline{{\bf x}_{i}}  (k+1)- {\bf x}^{*} \| ^{2} / \mathcal{F}_{k} ] \leq  \\
 & (1+( 2 -\sigma)\alpha_{k} )(1 +  (A_{\eta}-a A_{\tau}) \alpha_{k} ^{2}) \sum_{i=1}^{n} \| {\bf v}_{i}(k-1) - {\bf x}^{*} \|^{2} \\
 & + (1 +  (A_{\eta}-a A_{\tau}) \alpha_{k} ^{2})  \alpha_{k-1} ^{2} n F 
 % & + ( \frac{1}{2} -  \gamma_{min} a) \sum_{i=1}^{n} \| {\bf v}_{i}(k) - {\bf x}^{*} \| ^{2}  \\
+ 16  (1-a) \alpha_{k} G_{f} \sum_{i=1}^{n}  \| {\bf v}_{i}(k) - {\bf x}^{*} \| ^{2} \\
 & - \frac{\sigma}{n}  \gamma_{min} (1-a) \alpha_{k} \sum_{i=1}^{n} \|\Pi_{\mathcal{X}}({\bf v}_{i}(k))- {\bf v}_{i}(k) \|  ^{2} \\
 & + \frac{\sigma}{n}  \gamma_{min} (1-a) \alpha_{k} \sum_{i=1}^{n} \| {\bf v}_{i}(k) - {\bf x}^{*} \|  ^{2} \\
 & + n \ (B_{\eta} - a B_{\tau} + 2 )\alpha_{k}^{2} G_{f} ^{2} 
\end{split}
\end{equation}}
But having $ \sum_{i=1}^{n} \| {\bf v}_{i}(k) - {\bf x}^{*} \|^{2} \leq \eta $, we get
{ \begin{equation}\label{convergence_rate_strongly_convex_Martingale_2}
\begin{split}
 & \mathbb{E} [ \sum_{i=1}^{n}  \|  \overline{{\bf x}_{i}}  (k)- {\bf x}^{*} \| ^{2}  ] \leq  \\
 & [(1 +  (A_{\eta}-a A_{\tau}) \alpha_{k} ^{2}) (1+( 2 -\sigma)\alpha_{k} + 4 \alpha_{k}^{2} n  L^{2} \max_{(i)} \|{\bf A}^{(i)} \| ^{2} _{\infty} \|{\bf B}^{(i)}\| _{2,\infty} ^{2} ) \\
& + 16  (1-a) \alpha_{k} G_{f} + \frac{\sigma}{n}  \gamma_{min} (1-a) \alpha_{k}]\eta + n \ (B_{\eta} - a B_{\tau} + 2 )\alpha_{k}^{2} G_{f} ^{2} 
% - \frac{\sigma}{n}  \gamma_{min} (1-a) \alpha_{k} \sum_{i=1}^{n} \|\Pi_{\mathcal{X}}({\bf v}_{i}(k))- {\bf v}_{i}(k) \|  ^{2}
\end{split}
\end{equation}}
which is the expected convergence rate of a strongly convex function formed of $ p $ strongly convex functions with $ f^{(i)}({\bf x}^{*})>f^{(i)}({\bf x}^{i}) $ for at least one $ (i) $
\subsection{Convergence Rate for Strongly Convex Function with $ f^{(i)}({\bf x}^{*})=f^{(i)}({\bf x}^{i}) $ for all $ (i) $}

The convergence rate of a strongly convex function formed of $ p $ strongly convex functions  with $ f^{(i)}({\bf x}^{*})=f^{(i)}({\bf x}^{i}) $ for all $ (i) $ can be deduced by applying Lemma~\ref{type1_error_bound} and is 
{ \begin{equation}\label{convergence_rate_strongly_convex_Martingale_1}
\begin{split}
 & \mathbb{E} [ \sum_{i=1}^{n}  \| {\bf v}_{i}  (k)- {\bf x}^{*} \| ^{2}  ] \leq \\
 & ( 1 - \mu + \frac{4 L \alpha_{k} \| {\bf A}^{(i)} \| _{\infty} \|{\bf B}^{(i)}\| _{2,\infty} ( 1 + 2 L \alpha_{k}  \| {\bf A}^{(i)} \|  _{\infty} \|{\bf B}^{(i)}\|  _{2,\infty} )} {(\sum_{(i)=1}^{p}\gamma_{(i)})^{2}})^{k}V_{0}  
\end{split}
\end{equation}}
for $ k \geq k_{0} $.

ection{Numerical Simulation}

Our aim in the simulation is to verify the convergence of the proposed algorithm, while showing its convergence rate for different network topologies.

In this section, we restrict the optimization problem to the following unconstrained convex optimization problem on a network  %associated with the objective function
\begin{equation}\label{objective.simulation}
 \arg\min_{x \in \mathbf{R}^{N}} \|  {\bf G}x- y \|_{2}^{2},\end{equation}
where the network contains $ n $ nodes, $ G $ is a random  matrix of size $M\times N$ whose entries are independent and identically distributed standard normal random variables, and
\begin{equation} y= {\bf G} x_o\in \mathbb{R}^{M}\end{equation}
has entries of ${\bf x}_o$ that are identically independent random variables sampled from the uniform bounded random distribution  between $ -1 $ and $ 1 $.
The solution ${\bf x}^{*}$ of the optimization problem above  is the least squares solution of the overdetermined system  %of equations
$ y  = {\bf G} x_o, \ x_o\in \mathbb{R}^N$. % For both networks we set $ m = N = 225 $.
We demonstrate the  performance of SRDO to solve the convex optimization problem \eqref{eq1}, and match it with the calculated convergence rates.

Assume that the network has $s$ as the maximum number of allowed stragglers. Then we can repartition the network
around those $ m $ worker nodes, and accordingly,
the random measurement matrix $ {\bf G} $,  the measurement data $ y$,  and the objective function  $ f({\bf x}):= \|  {\bf G} x- y  \|_{2}^{2} $ in \eqref{eq1} as follows:
 $$f({\bf x})= \sum_{i=1}^{n} f^{(i)}({\bf x}):= \sum_{i=1}^{n} \| {\bf G}_i x-y_i\|_2^2.  $$
In the simulations, without a loss of generality, we assume the number of worker partitions equals the number of server nodes; i.e., $ p = n $. We also assume that the repartitioned parts have the same size; i.e., the number of rows in $ {\bf G}_{i} $
and the lengths of vectors $y_i, 1\le i\le n$ are the same.

Each worker node finds its local coded gradient through a combination of uncoded local gradients computed through local optimization problems of  overdetermined linear systems of equations. The stepsizes $\alpha_{k}$ are chosen such that
\vspace{-0.1cm}
\begin{equation}
    \alpha_{k} = \frac{1}{(k+a)^{-\theta}}.
\end{equation}  
We use the  absolute error
\vspace{-0.8cm}
\begin{equation*}
\begin{split}
 \hspace{4.2cm} {\rm AE}:= \max_{1\le i\le n} \frac{\|{\bf x}_i(k)-{\bf x}_o\|_2}{\|{\bf x}_0\|_2}      
\end{split}
\end{equation*}
and  consensus error
\vspace{-0.5cm}
\begin{equation*}
\begin{split}
 \hspace{3.8cm}  {\rm CE}:= \max_{1\le i\le n} \frac{\|{\bf x}_i(k)-\bar{\bf x}(k)\|_2}{\|{\bf x}_o\|_2}     
\end{split}
\end{equation*}

%{\bf Centralized error}:= $\|\bar {\bf x}(k)-\bar{\bf x}\|_2 /\|\bar{x}\| $.
%
%which are used which are given by
to measure  the performance of SRDO, where $n$ is the number of server nodes (i.e., servers) in the network.

We simulate the algorithm for the fixed network topologies of the allowed number of stragglers or varying connection topologies according to gradient computation scenarios $ 1 $ through $ 3 $.

%We are also able to adjust the power coefficient $ \gamma $ of $ k $ in $ \alpha_{k} \tilde{=} \frac{1}{k^{\gamma}} $ so that we achieve a better convergence. We should be cautious in our choice of $ \gamma $ as it is a trade off between good convergence, dependent on the type of connection topology and complete divergence. Mainly, value of $ \alpha_{k}$ close to $ \frac{1}{(\frac{2}{k}+k)^{\frac{\mu}{12.35}}} $ is a good choice but vary in its convergence rate dependent on the following: the instant used, the size of the subsystem of linear equations (number of rows $ \mu = m_{i} $ for all $ i \in V $) and the type of topology and the matrix condition number. 

We present the simulation for $ 100 $ samples of parameter server networks of $ p =5 $ equal sized partitions $ i $, where $ n_{i} =3 $, $ s_{i} = 1 $ and $ n_{i} = 5 $, $ s_{i} = 2 $ for $ \mu = 100 $, $ N =100 $, i.e., $ m_{i} = 300$ and $ 500 $, and $ M= 1500 $ and $ 2500 $, respectively. Here, $ \mu $ stands for the number of rows in a partition sub-partition, which is assumed equal all over the network. That is, $ \mu $ corresponds to the functions $ f^{(i)}_{l} $, where $ l $ corresponds to worker node $ l $ in partition $ i $.
We simulate different samples from variant connectivity levels:

\begin{itemize}
\item{\emph{Strong Connectivity} Condition:}
i) the allowed-number-of-stragglers connection topology (i.e., strong connectivity condition under gradient computation scenario $ 1 $), Fig. %\ref{Fig.1}, \ref{Fig.2}, \ref{Fig.3},
 \ref{Fig.5} in comparison with the full connection with the no failures scenario of uncoded centralized gradient descent algorithm.
\item{\emph{Weak Connectivity} Condition:}
ii) the more-than-the-allowed number of stragglers connection with some instants of allowed number of stragglers connectivity under the weak straggler tolerance condition compared with the uncoded centralized gradient descent. For the two used gradient computation scenarios, we have:
\begin{itemize}
\item{a) Scenario $ 2 $ where only iteration available instant gradients are used, Fig. \ref{Fig.7}.}
\item{b) Scenario $ 3 $ with delayed gradients used, Fig. \ref{Fig.9} and Fig. \ref{Fig.10}.}
\end{itemize}
\end{itemize}

\subsection{Discussion of the Figures}

%In figures \ref{Fig.1},\ref{Fig.2} and \ref{Fig.3}, we observe that the APC algorithm converges more rapidly than our proposed SRDO algorithm. For the APC algorithm, the decrease in the relative consecutive iterations error is a good estimate for the exact convergence behavior (close to the relative exact error). While for the SRDO algorithm, the decrease in the relative consecutive iterations error is a good but not exact sign of the relative exact error behavior. \\

In Fig. \ref{Fig.5},  SRDO algorithm converges in a rate close to the uncoded centralized gradient descent algorithm with full connection. The convergence can be faster or slower depending on the condition number of the coded matrices in relation to the uncoded matrices at each node; i.e., this is related to the respective Lipschitz constants $ L $.
For Figs. \ref{Fig.7}, 
%\ref{Fig.8},
\ref{Fig.9}, 
and \ref{Fig.10}, the behavior matches the described convergence rates in Section VII. The fluctuation of the average consensus  error for the SRDO has a larger variation, which is dependent on $ T = \max_{i \in [1:n] }T_{i} $, while the proposed algorithm corresponding absolute
error behaves more smoothly.

\begin{comment}
\begin{figure}[H]
\hspace*{-0.5cm}
 %\includegraphics[width=10cm,height=7cm,angle=0]
 {}
 \includegraphics[bb=0 0 800 800, scale=0.5]
 {300_100_T0SS.png}
\caption{Allowed number of stragglers connection for SRDO algorithm network (n=3, s=1) with with $ \frac{1}{(k+300)^{0.55}} $ and centralized uncoded gradient descent algorithm for a full connection with no stragglers (n=3, s=0), for $ M= 300, N =100 $.}
\label{Fig.4}
\end{figure}
\end{comment}

\vspace{-0.5cm}
\begin{figure}[H]
\hspace*{-0.5cm}
\includegraphics[bb=0 0 800 800,scale=0.5]{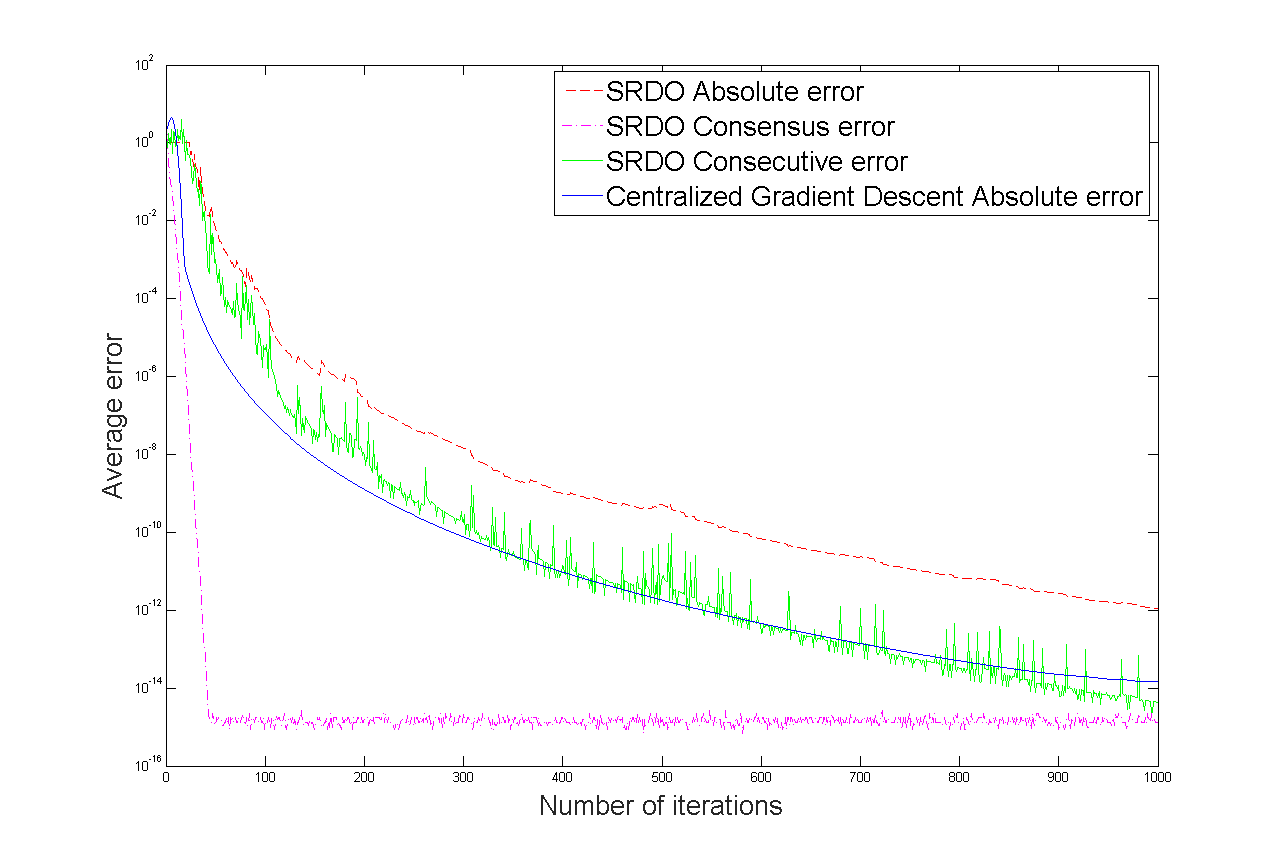}
\caption{Allowed number of stragglers connection for SRDO algorithm network (n=5, s=2) with $ \frac{1}{(k+300)^{0.55}} $ and centralized uncoded gradient descent algorithm for a full connection with no stragglers (n=5, s=0), for $ M= 2500, N =100 $.}
\label{Fig.5}
\end{figure}

\begin{comment}
\begin{figure}[H]
\hspace*{-0.5cm}
 \includegraphics[width=10cm,height=7cm,angle=0]
 {}
\caption{Allowed number of stragglers connection for SRDO algorithm network (n=7, s=3) with $ \frac{1}{(k+300)^{0.55}} $ and centralized uncoded gradient descent algorithm for a full connection with no stragglers (n=7, s=0), for $ M= 700, N =100 $.}
\label{Fig.6}
\end{figure}
\end{comment}

We can conceive from the simulations that SRDO has faster convergence for a smaller exponent $\theta \in (0 , 1] $, which confirms its convergence rate estimate in Section VII.
%We can conceive from the figures that lower values of $ \theta $ allow faster convergence rates for CoDGraD and the more is the increase in $ \theta $ the more degraded is that convergence.
However, the simulations also indicate that decreasing exponent $\theta$ moves SRDO into the divergence phase, which could directly be related to the complexity of the network.
It is  worth mentioning that we can adequately calibrate this divergence by increasing the value of $ a $ in our illustrated examples for a fixed exponent $ \theta $.
We can see that for a fixed value of $ T $ and a fixed condition number, i.e., fixed Lipschitz constant $ L $, (more specifically for a fixed matrix $ G_{i} $), the decrease in the exponent $ \theta $ allows the algorithm to enter the divergence instability region. Then an increase in the stepsize will make it converge the fastest where then any increase will ultimately degrade the algorithm to a slower convergence.
Similarly, if we fix $ T $ and the stepsize, the behavior of the convergence of the algorithm relative to the change in the condition number is the same as that relative to the stepsize in the previous scenario.
Moreover, we see that when $ T $ increases, and the allowed number of stragglers connection becomes less frequent, then the convergence is replaced by an anticipated divergence. Then, for that $ T $, we can reenter the convergence region of the algorithm by increasing the exponent $ \theta $ for a fixed optimization problem.  Convergence is also achieved for problems with matrices of higher condition numbers when the stepsize is fixed.
As for scenario $ 3 $, we see that its performance is better than that of scenario $ 2 $, because scheme $ 3 $ exploits stale gradients to form the overall gradient at each instant, although both schemes $ 2 $ and $ 3 $ have slower convergence rates for the fixed stepsize and fixed condition number.

\begin{figure}
\hspace*{-0.5cm}
\includegraphics[bb=0 0 800 800,scale=0.5]{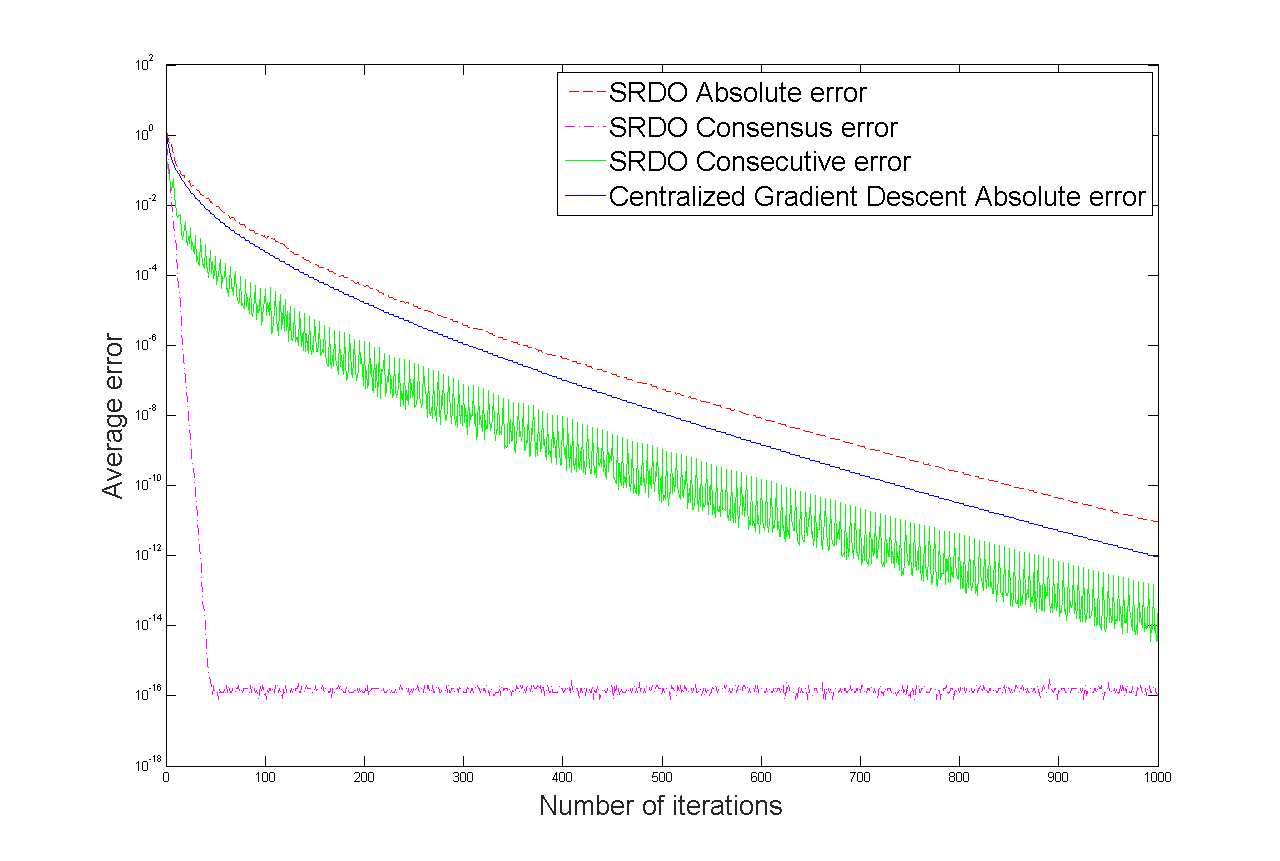}
\caption{Varying number of stragglers connection for SRDO algorithm network (n=3, s=1) and $ T=5 $ for $ \alpha_{k}= \frac{1}{(k+300)^{0.35}} $ using gradient computation scenario $ 2 $ and centralized uncoded gradient descent algorithm for a full connection with no stragglers (n=3, s=0), for $ M= 1500, N =100 $.}
\label{Fig.7}
\end{figure}

\begin{comment}
\vspace{0.5cm}
\begin{figure}
\hspace*{-0.5cm}
 \includegraphics[trim = 0 0 0 40, clip, width=10cm,height=6.5cm,angle=0]
 {}
\caption{Varying number of stragglers connection for SRDO algorithm network (n=5, s=2) and $ T=50 $ for $ \alpha_{k}= \frac{1}{(k+300)^{0.95}} $ using gradient computation scenario $ 2 $  and centralized uncoded gradient descent algorithm for a full connection with no stragglers (n=5, s=0), for$ M= 500, N =100 $.}
\label{Fig.8}
\end{figure}
\end{comment}

\begin{figure}
\hspace*{-0.5cm}
\includegraphics[bb=0 0 800 800,scale=0.5]{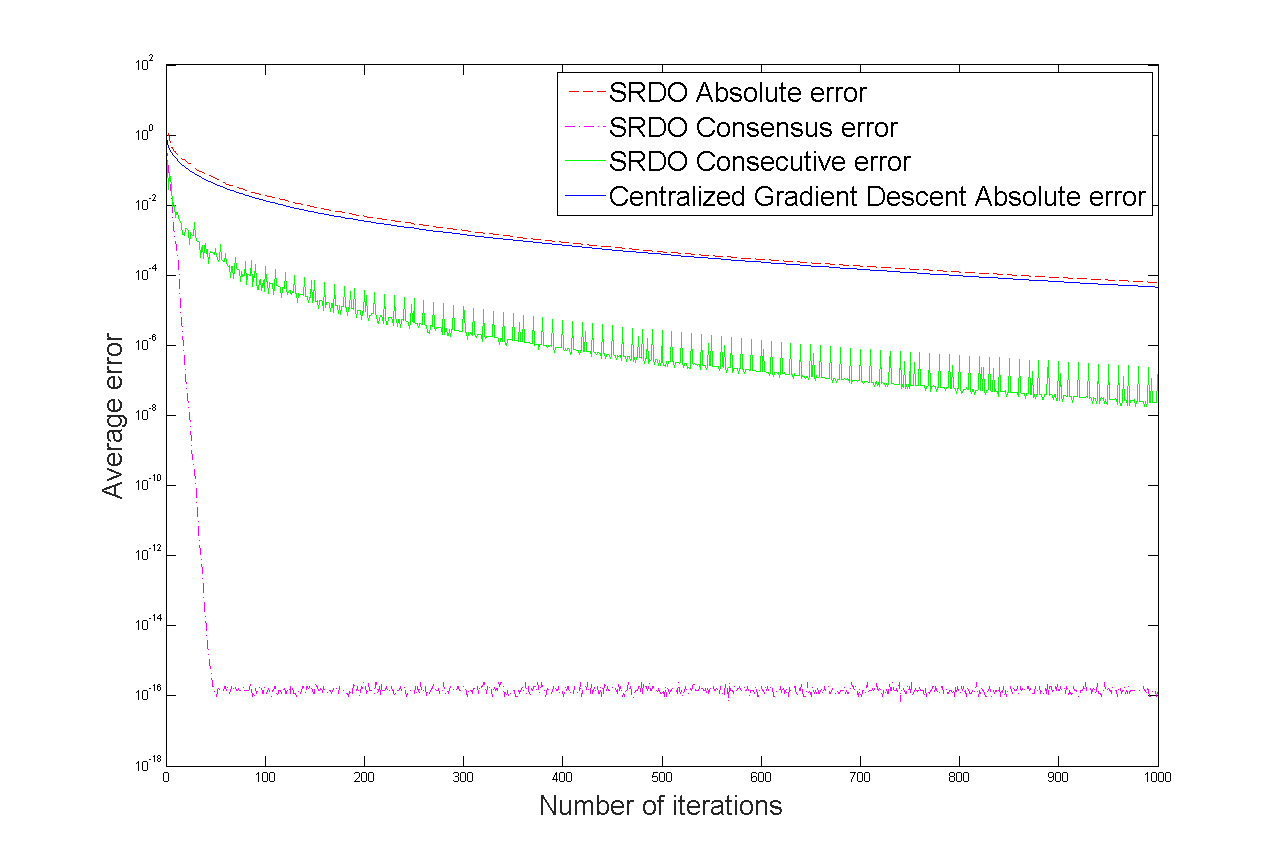}
\caption{Varying number of stragglers connection for SRDO algorithm network (n=3, s=1) and $ T=10 $ for $ \alpha_{k}= \frac{1}{(k+300)^{0.55}} $ using gradient computation scenario $ 3 $  and centralized uncoded gradient descent algorithm for a full connection with no stragglers (n=3, s=0), for $ M= 1500, N =100 $.}
\label{Fig.9}
\end{figure}

Thus, we can anticipate in Fig. \ref{Fig.7} that the value of $ \theta = 0.35 $ allowed a comparable convergence rate of the SRDO for $ T = 5 $ as that of the centralized gradient descent algorithm.
%From Fig. \ref{Fig.8}, we can see that the increase in the value to $ \theta=0.95 $ allowed the convergence of the SRDO algorithm for $ T=50 $ where it also became closer in performance to the centralized gradient descent algorithm. 
In Figs. \ref{Fig.9} and \ref{Fig.10}, we realize that the lower value of $ \theta=0.35 $ is not permissible, because the SRDO algorithm will considerably enter the instability region, while a higher value of $ \theta=0.55 $ favors a better convergence rate for $ T=10 $, and the highest value of $ \theta=0.75 $ a better convergence rate for $ T=20 $.
Moreover, in the simulation we have used a definite coding scheme introduced in \citep{tandon17a}. We could have used other schemes. As it is proven in Section VII, we anticipate a behavior dependent on the coding scheme as shown in \eqref{convergence_rate}. Meanwhile, we can adequately propose a coding scheme that will better enhance the convergence rate.

\begin{figure}
\hspace*{-0.5cm}
\includegraphics[bb=0 0 800 800,scale=0.5]{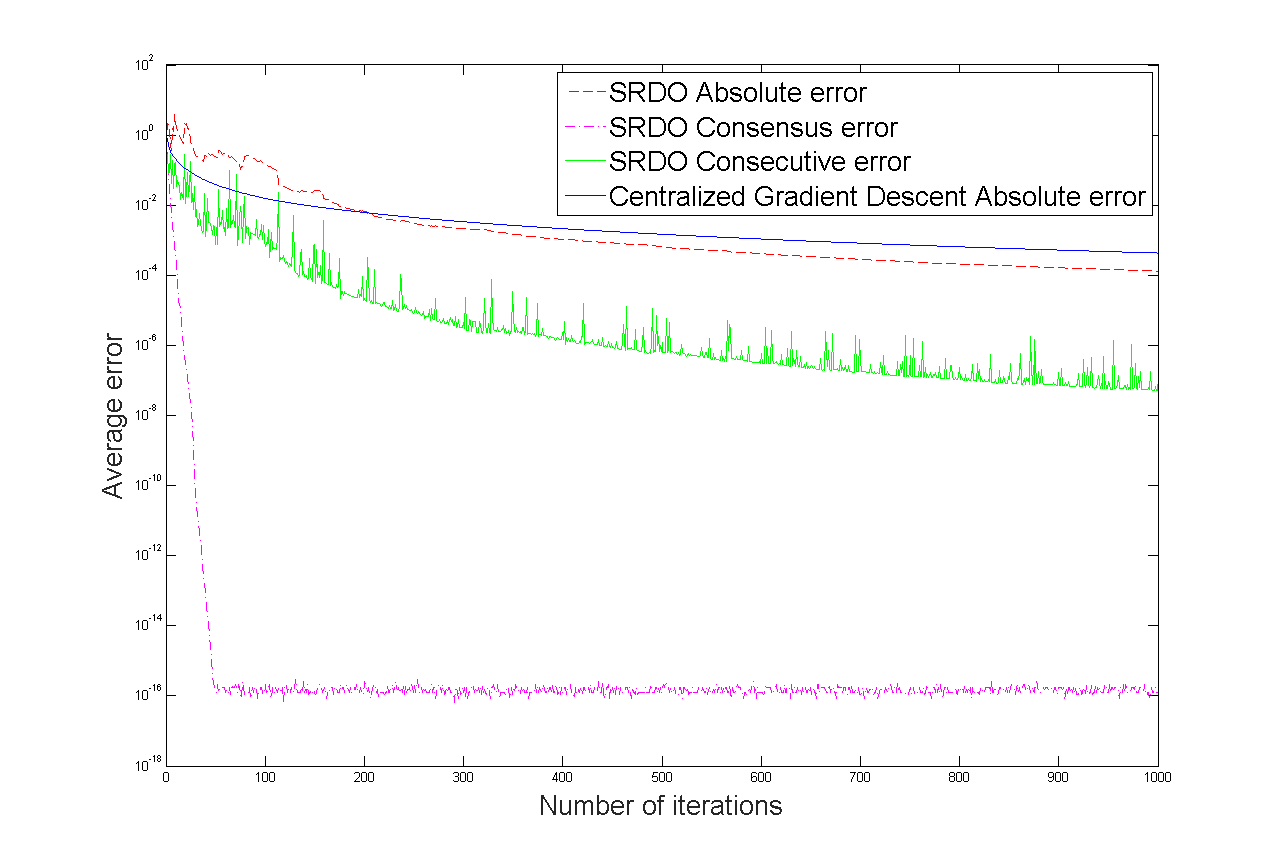}
\caption{Varying number of stragglers connection for SRDO algorithm network (n=3, s=1) and $ T=20 $ for $ \alpha_{k}= \frac{1}{(k+300)^{0.75}} $ using gradient computation scenario $ 3 $  and centralized uncoded gradient descent algorithm for a full connection with no stragglers (n=3, s=0), for $ M= 1500, N =100 $.}
\label{Fig.10}
\end{figure}

%We respectively find the average absolute error of the estimated solution in relation with the exact solution as it varies across the number of iterations used, as well as the relative error between two consecutive iterations of the estimating solution iterates. \\

%A time-invariant network connection topology where the number of stragglers at certain instants may exceed the allowed threshold. The SRDO random variant behaves as in Scenarios 2 and 3 where Assumption 2a is satisfied (Weak Stragglers Tolerance) and Assumption 4 also holds implicitly.

\section{Conclusion}

We have considered in this paper a parameter server network algorithm, SRDO, for minimizing a convex function that consists of a number of component functions. We restricted the simulation for the case of a quadratic function which corresponds to solving an overdetermined system of linear equations. A convergence proof for this algorithm in its general form (not necessarily a quadratic function) was provided in the case of network topologies where the number of stragglers is under the allowed threshold (cf. \cite{tandon17a}). The convergence of the algorithm variants, in the case of the number of stragglers exceeding the quantity allowed, is formalized in Section VII through the described convergence rates. Furthermore, the simulation showed optimal results for the algorithm convergence rate, not only for the allowed number of stragglers scenarios but also for the number of stragglers exceeding the allowed threshold. These metrics matched the centralized gradient descent method with the bonus of robustness to an allowed number of stragglers, and to the case of stragglers exceeding the allowed number. We further analytically showed that the convergence rate can be considerably enhanced through applying an adequate coding scheme.

\begin{appendix}

\subsection{Proof of Lemma~\ref{general_error_bound}}

\begin{comment}
\begin{lem}\label{general_error_bound}
Let Assumptions 1, 2, 4 and 5 hold. Let the set X be bounded. Let the sequences $ \{{\bf x}_{i}(k)\} $ and $ \{ {\bf v}_{i}(k) \} $ be generated by method \eqref{eq2}. Then we have 
{\small \begin{equation}
\begin{split}
    & \sum_{l=1}^{n}\|  v_{l}(k+1)-  x^{*}\|^{2}  \leq \sum_{j=1}^{n}\|v_{j}(k)-x^{*}\|^{2}  \\
    & + 2\alpha_{k}\sum_{j=1}^{n}\sum_{i=1}^{p}\gamma_{i}\langle \nabla{f}^{(i)}(v_{j}(k)), x^{*}-v_{j}(k)\rangle  + 2 \alpha_{k}\sum_{j=1}^{n}\langle \sum_{i=1}^{p}\gamma_{i}\epsilon_{i}(k), v_{j}-x^{*}\rangle \\
    & + 2 \alpha_{k}^{2} \frac{\sum_{j=1}^{n}\sum_{i=1}^{p}\gamma_{i}}{(\sum_{i=1}^{p}\gamma_{i})^{2}}[\|\nabla{f}^{(i)}(v_{j}(k))\|^{2} + \|\epsilon_{i}(k)\|^{2}]
\end{split}
\end{equation}}

\end{lem}

\end{comment}

\begin{proof}
{\footnotesize \begin{equation}
\begin{split}
    & \|  {\bf v}_{l}(k+1)-  {\bf x}^{*}\|^{2} = \|\sum_{j=1}^{n} [{\bf W}(k)]_{l,j} {\bf x}_{j}(k+1)- \sum_{j=1}^{n} [{\bf W}(k)]_{l,j} {\bf x}^{*}\|^{2}  \\
     & \leq \sum_{j=1}^{n} [{\bf W}(k)]_{l,j} \| {\bf x}_{j}(k+1)- {\bf x}^{*}\|^{2} \\
     & \leq \sum_{j=1}^{n}{[\bf W}(k)]_{l,j} \| \sum_{(i)=1}^{p} \gamma_{(i)} [{\bf v}_{j}(k) -\alpha_{k}\widehat{\nabla{f}^{(i)}}({\bf v}_{j}(k))]+\gamma_{(0)} {\bf v}_{j}(k) - {\bf x}^{*} \|^{2} \\
     & \leq \sum_{j=1}^{n}[{\bf W}(k)]_{l,j} \| {\bf v}_{j}(k) -\alpha_{k} \sum_{(i)=1}^{p}\widehat{\nabla{f}^{(i)}}({\bf v}_{j}(k))- {\bf x}^{*} \|^{2} \\
     & \leq \sum_{j=1}^{n}[{\bf W}(k)]_{l,j} [\| {\bf v}_{j}(k) - {\bf x}^{*} \|^{2} - 2 \alpha_{k}\sum_{(i)=1}^{p} \gamma_{(i)}\langle \widehat{\nabla{f}^{(i)}}({\bf v}_{j}(k)), {\bf v}_{j}(k) - {\bf x}^{*} \rangle \\
     & + \alpha_{k}^{2} \|\sum_{(i)=1}^{p}\gamma_{(i)}\widehat{\nabla{f}^{(i)}}({\bf v}_{j}(k))\|^{2}] \\
     & \leq \sum_{j=1}^{n}[{\bf W}(k)]_{l,j} [\|  {\bf v}_{j}(k) - {\bf x}^{*} \|^{2} + 2 \alpha_{k} \sum_{(i)=1}^{p} \gamma_{(i)} \langle \nabla{f}^{(i)}({\bf v}_{j}(k)), {\bf x}^{*} - {\bf v}_{j}(k) \rangle \\
     & +2 \alpha_{k} \sum_{(i)=1}^{p} \gamma_{(i)}\langle {\bf \epsilon}_{j,(i)}(k), v_{j}(k) - {\bf x}^{*} \rangle 
     + \alpha_{k}^{2} \|\sum_{(i)=1}^{p}\gamma_{(i)}(\nabla{f}^{(i)}({\bf v}_{j}(k)) - {\bf \epsilon}_{j,(i)}(k))\|^{2}] \\
     & \leq \sum_{j=1}^{n}[{\bf W}(k)]_{l,j} [ \|  {\bf v}_{j}(k) - {\bf x}^{*} \|^{2} + 2 \alpha_{k} \sum_{(i)=1}^{p} \gamma_{(i)} \langle \nabla{f}^{(i)}({\bf v}_{j}(k)), {\bf x}^{*} - {\bf v}_{j}(k) \rangle \\
     & +2 \alpha_{k} \sum_{(i)=1}^{p} \gamma_{(i)}\langle {\bf \epsilon}_{j,(i)}(k), {\bf v}_{j}(k) - {\bf x}^{*} \rangle 
      + \alpha_{k}^{2} \|\sum_{(i)=1}^{p}\gamma_{(i)}\frac{(\nabla{f}^{(i)}({\bf v}_{j}(k)) - {\bf \epsilon}_{j,(i)}(k))}{(\sum_{(i)=1}^{p}\gamma_{(i)})^{2}}\|^{2}]
     \end{split}
     \end{equation}}
     
\end{proof}
Then summing from $i=1$ to $n$ and knowing that the sum of each column is less than or equal to $ 1 -\mu $ then the lemma follows.

\subsection{ Proof of Lemma~\ref{type1_error_bound} }

\begin{comment}
\begin{lem}\label{type1_error_bound}
Let Assumptions 1, 2, 4 and 5 hold. Let the functions in Assumption 1 also satisfy  $ f^{(i)}({\bf x}^{*}) = f^{(i)}({\bf x}^{(i)}) $ for all $ (i) $. Let the sequences $ \{{\bf x}_{i}(k)\} $ and $ \{ {\bf v}_{i}(k) \} $, $ i \in V $ be generated by method \eqref{eq2}. Then we have 

{\small \begin{equation}
\begin{split}
    & \sum_{l=1}^{n}\|  v_{l}(k+1)-  x^{*}\|^{2}  \leq (1-\mu)\sum_{j=1}^{n}\|v_{j}(k)-x^{*}\|^{2}  \\
   & + 2 \alpha_{k}\sum_{j=1}^{n}\langle \sum_{i=1}^{p}\gamma_{i}\epsilon_{i}(k), v_{j}-x^{*}\rangle + 2 \alpha_{k}^{2} \frac{\sum_{j=1}^{n}\sum_{i=1}^{p}\gamma_{i}}{(\sum_{i=1}^{p}\gamma_{i})^{2}} \|\epsilon_{i}(k)\|^{2}
\end{split}
\end{equation}}

\end{lem}
\end{comment}

Having Lemma~\ref{general_error_bound} then 

\begin{comment}
\begin{equation}\label{gradient_bound}
\begin{split}
   \|\nabla{f}^{(i)}({\bf v}_{j}(k))\|^{2} \leq \frac{f^{(i)}({\bf v}_{j}(k))-f^{(i)}({\bf x}^{(i)})}{\alpha_{k} - \frac{\alpha_{k}^{2}L}{2}} 
\end{split}
\end{equation}
\end{comment}

we have
{ \begin{equation}\label{type1_exp}
\begin{split}
    & ( 1- \mu) \sum_{l=1}^{n}\|  {\bf v}_{l}(k+1) - {\bf x}^{*}\|^{2}  \leq (1-\mu) \sum_{j=1}^{n}\|{\bf v}_{j}(k)- {\bf x}^{*}\|^{2}  \\
   & + 2 \alpha_{k}\sum_{j=1}^{n}\sum_{(i)=1}^{p}\gamma_{(i)} \langle {\bf \epsilon}_{j,(i)}(k), {\bf v}_{j} - {\bf x}^{*} \rangle  + 2 \alpha_{k}^{2} \frac{\sum_{j=1}^{n}\sum_{(i)=1}^{p}\gamma_{(i)}}{(\sum_{(i)=1}^{p}\gamma_{(i)})^{2}}\|{\bf \epsilon}_{j,(i)}(k)\|^{2}\\
    & + 2 \alpha_{k} \sum_{j=1}^{n}\sum_{(i)=1}^{p}\gamma_{(i)} \langle \nabla{f}^{(i)}({\bf v}_{j}(k)), {\bf x}^{*} - {\bf v}_{j}(k) \rangle  \\
    &  + \frac{2 \alpha_{k}^{2}}{(\sum_{(i)=1}^{p}\gamma_{(i)})^{2}})\sum_{j=1}^{n}\sum_{(i)=1}^{p}\gamma_{(i)} \| \nabla{f}^{(i)}({\bf v}_{j}(k)) \| ^{2}
\end{split}
\end{equation}}

But
{ \begin{equation}\label{des_exp}
\begin{split}
& \langle \nabla{f}^{(i)}({\bf v}_{j}(k)), {\bf x}^{*} - {\bf v}_{j}(k) \rangle  \\
& = - \langle \nabla{f}^{(i)}({\bf v}_{j}(k)), {\bf v}_{j}(k) -{\bf x}^{*} \rangle \\ & = - \langle \nabla{f}^{(i)}({\bf v}_{j}(k)), {\bf v}_{j}(k) -{\bf x}^{(i)} \rangle \\
& = - \langle \nabla{f}^{(i)}({\bf v}_{j}(k)) - \nabla{f}^{(i)}({\bf x}^{(i)}), {\bf v}_{j}(k) -{\bf x}^{(i)} \rangle 
\end{split}
\end{equation}}
While 
{ \begin{equation}
\begin{split}
 \nabla{f}^{(i)}({\bf v}_{j}(k)) - \nabla{f}^{(i)}({\bf x}^{(i)}) = a \| {\bf v}_{j}(k) - {\bf x}^{(i)} \| \overrightarrow{u}
\end{split}
\end{equation}}
where $ \| \overrightarrow{u} \| = 1 $ and $  0 \leq a \leq L $.
and 
{ \begin{equation}
\begin{split}
& {\bf v}_{j}(k) -{\bf x}^{(i)} = \| {\bf v}_{j}(k) -{\bf x}^{(i)} \| \overrightarrow{v}
\end{split}
\end{equation}}
where $ \| \overrightarrow{v} \| = 1 $.
Using what preceded we have the expression in \eqref{des_exp} equal to
{ \begin{equation}
\begin{split}
& \langle \nabla{f}^{(i)}({\bf v}_{j}(k)), {\bf x}^{*} - {\bf v}_{j}(k) \rangle  \\
& = - \langle \nabla{f}^{(i)}({\bf v}_{j}(k)) - \nabla{f}^{(i)}({\bf x}^{(i)}), {\bf v}_{j}(k) -{\bf x}^{(i)} \rangle \\
& = -a \| {\bf v}_{j}(k) - {\bf x}^{(i)} \| ^{2}  \langle \overrightarrow{u},\overrightarrow{v} \rangle
\end{split}
\end{equation}}
But since $  \langle \overrightarrow{u},\overrightarrow{v} \rangle \geq 0 $ due to the monotonicity of the gradient we have $  0 \leq \langle \overrightarrow{u},\overrightarrow{v} \rangle \leq  1 $.
Then
{ \begin{equation}\label{res1}
\begin{split}
& \langle \nabla{f}^{(i)}({\bf v}_{j}(k)), {\bf x}^{*} - {\bf v}_{j}(k) \rangle  \\
& = -a \| {\bf v}_{j}(k) - {\bf x}^{(i)} \| ^{2}  b
\end{split}
\end{equation}}
where $ 0 \leq b \leq 1 $.

%But the last term is negative and its coefficient is decreasing. Thus, for $k>k_{1}$ which is dependent on the Lipcshitz constant of the function we have the last term less than $ -\mu \sum_{j=1}^{n}\|{\bf v}_{j}(k)-{\bf x}^{*}\|^{2} $.

Similarly, 
{\small \begin{equation}\label{res2}
\begin{split}
\| \nabla{f}^{(i)} & ({\bf v}_{j}(k)) \|^{2} =  \\
& \langle \nabla{f}^{(i)}({\bf v}_{j}(k)) - \nabla{f}^{(i)}({\bf x}^{(i)}),\nabla{f}^{(i)}({\bf v}_{j}(k)) - \nabla{f}^{(i)}({\bf x}^{(i)}) \rangle \\
& = a^{2} \| {\bf v}_{j}(k) -{\bf x}^{(i)} \| ^{2} \rangle \overrightarrow{u},\rightarrow{u} \rangle \\
& = a^{2} \| {\bf v}_{j}(k) -{\bf x}^{(i)} \| ^{2}
\end{split}
\end{equation}}
Then substituting \eqref{res1} and \eqref{res2} in \eqref{type1_exp} we have
{ \begin{equation}\label{type1_exp}
\begin{split}
    & ( 1 -\mu ) \sum_{l=1}^{n}\|  {\bf v}_{l}(k+1) - {\bf x}^{*}\|^{2}  \leq (1-\mu) \sum_{j=1}^{n}\|{\bf v}_{j}(k)- {\bf x}^{*}\|^{2}  \\
   & + 2 \alpha_{k}\sum_{j=1}^{n}\sum_{(i)=1}^{p}\gamma_{(i)} \langle {\bf \epsilon}_{j,(i)}(k), {\bf v}_{j} - {\bf x}^{*} \rangle  + 2 \alpha_{k}^{2} \frac{\sum_{j=1}^{n}\sum_{(i)=1}^{p}\gamma_{(i)}}{(\sum_{(i)=1}^{p}\gamma_{(i)})^{2}}\|{\bf \epsilon}_{j,(i)}(k)\|^{2}\\
    & - 2 a \alpha_{k} \sum_{j=1}^{n}\sum_{(i)=1}^{p}\gamma_{(i)} (b - \frac{2 \alpha_{k} a}{(\sum_{(i)=1}^{p}\gamma_{(i)})^{2}} ) \| {\bf v}_{j}(k) - {\bf x}^{(i)} \| ^{2}   
\end{split}
\end{equation}}

\subsection{ Proof of Lemma~\ref{type2_error_bound}}

\begin{comment}
\begin{lem}
If $ f^{(i)}({\bf x}^{(i)}) < f^{(i)}({x}^{*}) $ for at least one $ (i) $ then
{ \begin{equation}
\begin{split}
    & \sum_{l=1}^{n}\|  {\bf v}_{l}(k+1)-  {\bf x}^{*}\|^{2}  \leq (1-\mu)\sum_{j=1}^{n}\| {\bf v}_{j}(k) - {bf x}^{*}\|^{2}  \\
    & + 2\alpha_{k}\sum_{j=1}^{n}\sum_{(i)  \in I } \gamma_{(i)}\langle \nabla{f}^{(i)}({\bf x}^{*}, {\bf x}^{*}-{\bf x}^{(i)} \rangle  + 2 \alpha_{k}\sum_{j=1}^{n}\langle \sum_{(i)=1}^{p}\gamma_{(i)}{\bf \epsilon}_{j,(i)}(k), {\bf v}_{j}(k)- {\bf x}^{*} \rangle \\
    & + 2 \alpha_{k}^{2} \frac{\sum_{j=1}^{n}\sum_{(i)=1}^{p}\gamma_{(i)}}{(\sum_{(i)=1}^{p}\gamma_{(i)})^{2}} \| {\bf \epsilon}_{j,(i)}(k)\|^{2}
\end{split}
\end{equation}}
\end{lem}
\end{comment}

From Lemma~\ref{general_error_bound} we have
we have
{ \begin{equation}
\begin{split}
    & ( 1- \mu) \sum_{l=1}^{n}\|  {\bf v}_{l}(k+1) - {\bf x}^{*}\|^{2}  \leq (1-\mu) \sum_{j=1}^{n}\|{\bf v}_{j}(k)- {\bf x}^{*}\|^{2}  \\
   & + 2 \alpha_{k}\sum_{j=1}^{n}\sum_{(i)=1}^{p}\gamma_{(i)} \langle {\bf \epsilon}_{j,(i)}(k), {\bf v}_{j} - {\bf x}^{*} \rangle  + 2 \alpha_{k}^{2} \frac{\sum_{j=1}^{n}\sum_{(i)=1}^{p}\gamma_{(i)}}{(\sum_{(i)=1}^{p}\gamma_{(i)})^{2}}\|{\bf \epsilon}_{j,(i)}(k)\|^{2}\\
    & + 2 \alpha_{k} \sum_{j=1}^{n}\sum_{(i)\in I }\gamma_{(i)} \langle \nabla{f}^{(i)}({\bf v}_{j}(k)), {\bf x}^{*} - {\bf v}_{j}(k) \rangle  \\
     & + 2 \alpha_{k} \sum_{j=1}^{n}\sum_{(i)\in I^{\complement} }\gamma_{(i)} \langle \nabla{f}^{(i)}({\bf v}_{j}(k)), {\bf x}^{*} - {\bf v}_{j}(k) \rangle  \\
    &  + \frac{2 \alpha_{k}^{2}}{(\sum_{(i)=1}^{p}\gamma_{(i)})^{2}})\sum_{j=1}^{n}\sum_{(i)=1}^{p}\gamma_{(i)} \| \nabla{f}^{(i)}({\bf v}_{j}(k)) \| ^{2}
\end{split}
\end{equation}}

\begin{comment}
{ \begin{equation}
\begin{split}
    & \sum_{l=1}^{n}\|  {\bf v}_{l}(k+1)-  {\bf x}^{*}\|^{2}  \leq \sum_{j=1}^{n}\|{\bf v}_{j}(k)-{\bf x}^{*}\|^{2}  \\
    & + 2\alpha_{k}\sum_{j=1}^{n}\sum_{(i)\in I }\gamma_{i}(f^{(i)}({\bf x}^{*})-f^{(i)}({\bf v}_{j}(k))) \\
    & + 2\alpha_{k}\sum_{j=1}^{n}\sum_{(i)\in I^{\complement} }\gamma_{i}(f^{(i)}({\bf x}^{*})-f^{(i)}({\bf v}_{j}(k))) \\
    & + 2 \alpha_{k}\sum_{j=1}^{n}\langle \sum_{(i)=1}^{p}\gamma_{(i)}{\bf \epsilon}_{j,(i)}(k), {\bf v}_{j}-{\bf x}^{*}\rangle \\
    & + 2 \alpha_{k}^{2} \frac{\sum_{j=1}^{n}\sum_{(i)=1}^{p}\gamma_{(i)}}{(\sum_{(i)=1}^{p}\gamma_{(i)})^{2}}[\|\nabla{f}^{(i)}({\bf v}_{j}(k))\|^{2} + \|{\bf \epsilon}_{j,(i)}(k)\|^{2}]
\end{split}
\end{equation}}
\end{comment}
But for $(i) \in I$ we have $ f^{(i)}({\bf x}^{(i)}) < f^{(i)}({\bf x}^{*})$ and for $(i) \in I^{\complement}$ we have $ f^{(i)}({\bf x}^{*})= f^{(i)}({\bf x}^{(i)})$, then the above inequality becomes
{ \begin{equation}
\begin{split}
    & ( 1- \mu) \sum_{l=1}^{n}\|  {\bf v}_{l}(k+1) - {\bf x}^{*}\|^{2}  \leq (1-\mu) \sum_{j=1}^{n}\|{\bf v}_{j}(k)- {\bf x}^{*}\|^{2}  \\
   & + 2 \alpha_{k}\sum_{j=1}^{n}\sum_{(i)=1}^{p}\gamma_{(i)} \langle {\bf \epsilon}_{j,(i)}(k), {\bf v}_{j} - {\bf x}^{*} \rangle  + 2 \alpha_{k}^{2} \frac{\sum_{j=1}^{n}\sum_{(i)=1}^{p}\gamma_{(i)}}{(\sum_{(i)=1}^{p}\gamma_{(i)})^{2}}\|{\bf \epsilon}_{j,(i)}(k)\|^{2}\\
    & + 2 \alpha_{k} \sum_{j=1}^{n}\sum_{(i)\in I }\gamma_{(i)} \langle \nabla{f}^{(i)}({\bf v}_{j}(k)), {\bf x}^{*} - {\bf x}^{(i)} - {\bf x}^{(i)} - {\bf v}_{j}(k) \rangle  \\
     & + 2 \alpha_{k} \sum_{j=1}^{n}\sum_{(i)\in I^{\complement} }\gamma_{(i)} \langle \nabla{f}^{(i)}({\bf v}_{j}(k)), {\bf x}^{(i)} - {\bf v}_{j}(k) \rangle  \\
    &  + \frac{2 \alpha_{k}^{2}}{(\sum_{(i)=1}^{p}\gamma_{(i)})^{2}})\sum_{j=1}^{n}\sum_{(i)=1}^{p}\gamma_{(i)} \| \nabla{f}^{(i)}({\bf v}_{j}(k)) \| ^{2}
\end{split}
\end{equation}}

\begin{comment}
{ \begin{equation}
\begin{split}
    & \sum_{l=1}^{n}\|  {\bf v}_{l}(k+1)-  {\bf x}^{*}\|^{2}  \leq \sum_{j=1}^{n}\|{\bf v}_{j}(k)-{\bf x}^{*}\|^{2}  \\
    & + 2\alpha_{k}\sum_{j=1}^{n}\sum_{(i)\in I }\gamma_{(i)}(f^{(i)}({\bf x}^{*})-f^{(i)}({\bf x}^{(i)})+f^{(i)}({\bf x}^{(i)})-f^{(i)}({\bf v}_{j}(k)))\\
    & + 2\alpha_{k}\sum_{j=1}^{n}\sum_{(i)\in I^{\complement} }\gamma_{(i)}(f^{(i)}({\bf x}^{(i)})-f^{(i)}({\bf v}_{j}(k))) \\
    & + 2 \alpha_{k}\sum_{j=1}^{n}\langle \sum_{(i)=1}^{p}\gamma_{(i)}{\bf \epsilon}_{j,(i)}(k), {\bf v}_{j}-{\bf x}^{*}\rangle \\
    & + 2 \alpha_{k}^{2} \frac{\sum_{j=1}^{n}\sum_{(i)=1}^{p}\gamma_{(i)}}{(\sum_{(i)=1}^{p}\gamma_{(i)})^{2}}[\|\nabla{f}^{(i)}({\bf v}_{j}(k))\|^{2} + \| {\bf \epsilon}_{j,(i)}(k)\|^{2}]
\end{split}
\end{equation}}
\end{comment}

which becomes
{ \begin{equation}\label{type2_error_bound_no_subst}
\begin{split}
    & ( 1- \mu) \sum_{l=1}^{n}\|  {\bf v}_{l}(k+1) - {\bf x}^{*}\|^{2}  \leq (1-\mu) \sum_{j=1}^{n}\|{\bf v}_{j}(k)- {\bf x}^{*}\|^{2}  \\
   & + 2 \alpha_{k}\sum_{j=1}^{n}\sum_{(i)=1}^{p}\gamma_{(i)} \langle {\bf \epsilon}_{j,(i)}(k), {\bf v}_{j} - {\bf x}^{*} \rangle  + 2 \alpha_{k}^{2} \frac{\sum_{j=1}^{n}\sum_{(i)=1}^{p}\gamma_{(i)}}{(\sum_{(i)=1}^{p}\gamma_{(i)})^{2}}\|{\bf \epsilon}_{j,(i)}(k)\|^{2}\\
    & + 2 \alpha_{k} \sum_{j=1}^{n}\sum_{(i)\in I }\gamma_{(i)} \langle \nabla{f}^{(i)}({\bf v}_{j}(k)), {\bf x}^{*} - {\bf x}^{(i)} \rangle  \\
     & + 2 \alpha_{k} \sum_{j=1}^{n}\sum_{(i)=1}^{p}\gamma_{(i)} \langle \nabla{f}^{(i)}({\bf v}_{j}(k)), {\bf x}^{(i)} - {\bf v}_{j}(k) \rangle  \\
    &  + \frac{2 \alpha_{k}^{2}}{(\sum_{(i)=1}^{p}\gamma_{(i)})^{2}})\sum_{j=1}^{n}\sum_{(i)=1}^{p}\gamma_{(i)} \| \nabla{f}^{(i)}({\bf v}_{j}(k)) \| ^{2}
\end{split}
\end{equation}}

\begin{comment}
{ \begin{equation}\label{ineq_type2_error_more_explicit}
\begin{split}
    & \sum_{l=1}^{n}\|  {\bf v}_{l}(k+1)-  {\bf x}^{*}\|^{2}  \leq \sum_{j=1}^{n}\|{\bf v}_{j}(k)-{\bf x}^{*}\|^{2}  \\
    & + 2\alpha_{k}\sum_{j=1}^{n}\sum_{(i)\in I }\gamma_{(i)}(f^{(i)}({\bf x}^{*})-f^{(i)}({\bf x}^{(i)})) \\
    & + 2\alpha_{k}\sum_{j=1}^{n}\sum_{(i)=1}^{p} \gamma_{(i)}(f^{(i)}({\bf x}^{i})-f^{(i)}({\bf v}_{j}(k))) \\
    & + 2 \alpha_{k}\sum_{j=1}^{n} \sum_{(i)=1}^{p}\gamma_{(i)}\langle {\bf \epsilon}_{j,(i)}(k), {\bf v}_{j}(k)-{\bf x}^{*}\rangle \\
    & + 2 \alpha_{k}^{2} \frac{\sum_{j=1}^{n}\sum_{(i)=1}^{p}\gamma_{(i)}}{(\sum_{(i)=1}^{p}\gamma_{(i)})^{2}}[\|\nabla{f}^{(i)}({\bf v}_{j}(k))\|^{2} + \| {\bf \epsilon}_{j,(i)}(k)\|^{2}]
\end{split}
\end{equation}}
\end{comment}

which by using \eqref{res1} and \eqref{res2} we get the result.

\subsection{Martingale 1}

\begin{lem}\label{Lemma6a}
Assume the following inequality holds a.s. for all $ k \geq k^{*} $,

\vspace{-0.3cm}

{ \begin{equation}\label{eqw}
    v_{k+1} \leq a_{1}v_{k}+ a_{2,k}\max_{k-B \leq \hat{k} \leq k} v_{\hat{k}}
\end{equation}}
$ v_{k}$. $u_{k}$, $ b_{k} $, $ c_{k} $, $ a_{1} $ and $ a_{2,k} $ are non-negative random variables where $ a_{1} + a_{2,k} \leq 1 $ and $ \{ a_{2,k} \} $ is a decreasing sequences. Then if for $ \rho = (a_{1}+a_{2,1})^{\frac{1}{B+1}} $ and
{ \begin{equation}\label{base}
    v_{k_{0}} \leq \rho^{\Phi(k_{0})}V_{0}^{'} \ \ \ a.s.
\end{equation}}
for base case $ k = k_{0} = \bar{k} - B  $. (i.e., notice $ V_{0} $ is not necessary the initial value $ v_{0} $). And $ \Phi $ is a random variable from $ \mathbb{N} $ to $ \mathbb{N} $ where $ \Phi([n,m])=[n,m] $.
 
And assume that this also holds for all $ k \geq k_{0} $ up to $ k = \bar{k} $ in an arbitrary manner (i.e., notice the power of $ \rho $ is independent of k ). i.e., $ k \in \{ k_{0}=\bar{k}-B, \ldots , \bar{k} \} $ and $ \bar{k} - B  \geq \max(k^{*},\tilde{k}) $. That is
{ \begin{equation}\label{ind1}
   v_{k} \leq \rho^{\Phi(k)}V_{0}^{'} \ \ \ a.s.
\end{equation}}
for $ k= \{ k_{0},\ldots,\bar{k} \} $. \\
%And for $ k = \bar{k} $ we have $ \Phi(\bar{k}) = \bar{k} $. That is
%\begin{equation}\label{ind1}
%   v_{\bar{k}} \leq \rho^{\bar{k}}V_{0}-b_{\bar{k}-1}u_{\bar{k}-1}+c_{\bar{k}-1} \ \ \ a.s.
%\end{equation}
Then we have

{ \begin{equation}\label{rec1}
     v_{k} \leq \rho^{k}V_{0} \ \ \ a.s.
 \end{equation}}
for all $ k \geq \bar{k} $ where $ V_{0} > 0 $ for all sequences patterns and $ \rho $ as before.
\end{lem}

{\bf Proof:} First since $ a_{1} + a_{2,k} \leq 1 $ then 
{ \begin{equation}
\begin{split}
 & 1  \leq (a_{1}+a_{2,k})^{-\frac{B}{B+1}} \implies
  1  \leq (a_{1}+a_{2,1})^{-\frac{B}{B+1}} \\
 \implies & (a_{1}+a_{2,k}) \leq (a_{1}+a_{2,1}) 
\end{split}
\end{equation}}
which implies that
{ \begin{equation*}
\begin{split}
a_{1} + & a_{2,k}  \rho ^ {-B}  = a_{1} + a_{2,k}(a_{1}+a_{2,1})^{-\frac{B}{B+1}} \\
& \leq a_{1,k}(a_{1}+a_{2,1})^{-\frac{B}{B+1}}+ a_{2,k}(a_{1}+a_{2,1})^{-\frac{B}{B+1}} \\
& = (a_{1}+a_{2,k})(a_{1}+a_{2,1})^{-\frac{B}{B+1}} \\
& \leq (a_{1}+a_{2,k})^{\frac{1}{B+1}} = \rho
\end{split}
\end{equation*}}
That is 
\vspace{-0.8cm}
{ \begin{equation}
\begin{split}
a_{1} + & a_{2,k}  \rho ^ {-B}  \leq \rho
\end{split}
\end{equation}}
Now, by induction we show that \eqref{rec1} for all $ k \geq k_{0} $.
Assume \eqref{base} is true for $ k = k_{0} $ and that the induction hypothesis holds for all $ k \geq k_{0} $ up to $ \bar{k} $ where $ k_{0}= k-B \leq k \leq \bar{k} $. Then  we have for any arbitrary behavior for $ k $ where $ k_{0}= k-B \leq k \leq \bar{k} $ that we can write the sequences $ v_{k} $ in a decreasing sequence. Without a loss of generality assume we will have for $ 0 \leq l \leq B $
{ \begin{equation}
\begin{split}
v_{\bar{k}} & \leq \rho ^ {\bar{k}-l} V_{0}^{'} \\
v_{\bar{k}-B} & \leq \rho ^{\Phi(\bar{k}-B)} V_{0}^{'} 
\end{split}    
\end{equation}}
Then from \eqref{eqw} we have

{\begin{equation}\label{induction}
\begin{split}
 v_{\bar{k}+1} & \leq a_{1}v_{\bar{k}}+ a_{2,k}\max_{\bar{k}-B \leq \hat{k} \leq \bar{k}} v_{\hat{k}} \\
     & \leq a_{1}\rho ^ {\bar{k}-l} V_{0}^{'} + a_{2,\bar{k}}\rho ^{\bar{k}-B} V_{0}^{'}  \\
      \leq & \ a_{1}\rho ^ {\bar{k} -l} V_{0}^{'} + a_{2,\bar{k}}\rho ^{\bar{k} - l - B } V_{0}^{'} 
      \\
     & = (a_{1} + a_{2,\bar{k}} \rho ^ {-B}) \rho ^ {\bar{k}-l } V_{0}^{'} 
       \\
      & \leq \rho^{\bar{k} - l + 1} V_{0}^{'}\ \ \ a.s.
\end{split}    
\end{equation}

But without a loss of generality, we can find $ V_{0} > 0 $ such that $ \rho^{\bar{k} - l + 1} V_{0}^{'} \leq \rho^{\bar{k}+1} V_{0}$ to keep indexing tractable.
And thus $ \eqref{induction} $ is true for all $ k \geq \bar{k} + 1 $. i.e., notice that for $ k + 1 = \bar{k} + 2 $, we already have for $ k  = \bar{k} + 1 $ that the power of $ \rho $ in the recursive inequality after the coefficient $ a_{1,\bar{k}} $ is $ \bar{k} + 1 $. Thus, no matter what the arbitrary behavior for the prior $ B $ terms is, we will have 

\vspace{-0.8cm}
{ \begin{equation}
\begin{split}
    v_{\bar{k}+2} & \leq \rho^{\bar{k} + 2} V_{0}^{'}  \ \ \ a.s.
\end{split}    
\end{equation}}

Thus, \eqref{induction} follows for all $ k \geq \bar{k} $. \hspace{3cm}
$ \blacksquare $

\begin{remark}
i.e., notice that it is true for $ k = \bar{k} $ since
{ \begin{equation}
\begin{split}
    v_{\bar{k}+1}  \leq  \rho^{\bar{k} - l} V_{0}^{'} \ \text{and}\
    v_{\bar{k}+1}  \leq  \rho^{\bar{k}} V_{0}
\end{split}    
\end{equation}}
\end{remark}
%%%%%%%%%%

\subsection{Martingale 2}

\begin{lem}\label{Lemma6a1}
Assume the following inequality holds a.s. for all $ k \geq k^{*} $,

\vspace{-0.3cm}

{ \begin{equation}\label{eqw1}
    v_{k+1} \leq a_{1}v_{k}+ a_{2,k}\max_{k-B \leq \hat{k} \leq k} v_{\hat{k}}+a_{3}
\end{equation}}
$ v_{k}$. $u_{k}$, $a_{3} $. $ a_{1} $ and $ a_{2,k} $ are non-negative random variables where $ a_{1} + a_{2,k} \leq 1 $ and $ \{ a_{2,k} \} $ is a decreasing sequences. Then if for $ \rho = (a_{1}+a_{2,1})^{\frac{1}{B+1}} $ and $\eta=\frac{a_{3}}{1-a_{1}-a_{2,1}}$
{ \begin{equation}\label{base1}
    v_{k_{0}} \leq \rho^{\Phi(k_{0})}V_{0}^{'}+\eta \ \ \ a.s.
\end{equation}}
for base case $ k = k_{0} = \bar{k} - B  $. (i.e., notice $ V_{0} $ is not necessary the initial value $ v_{0} $). And $ \Phi $ is a random variable from $ \mathbb{N} $ to $ \mathbb{N} $ where $ \Phi([n,m])=[n,m] $.
 
And assume that this also holds for all $ k \geq k_{0} $ up to $ k = \bar{k} $ in an arbitrary manner (i.e., notice the power of $ \rho $ is independent of k ). i.e., $ k \in \{ k_{0}=\bar{k}-B, \ldots , \bar{k} \} $ and $ \bar{k} - B  \geq \max(k^{*},\tilde{k}) $. That is
{ \begin{equation}\label{ind11}
   v_{k} \leq \rho^{\Phi(k)}V_{0}^{'}+\eta \ \ \ a.s.
\end{equation}}
for $ k= \{ k_{0},\ldots,\bar{k} \} $. \\
%And for $ k = \bar{k} $ we have $ \Phi(\bar{k}) = \bar{k} $. That is
%\begin{equation}\label{ind1}
%   v_{\bar{k}} \leq \rho^{\bar{k}}V_{0}-b_{\bar{k}-1}u_{\bar{k}-1}+c_{\bar{k}-1} \ \ \ a.s.
%\end{equation}
Then we have

\vspace{-0.5cm}

{ \begin{equation}\label{rec11}
     v_{k} \leq \rho^{k}V_{0}+\eta \ \ \ a.s.
 \end{equation}}
for all $ k \geq \bar{k} $ where $ V_{0} > 0 $ for all sequences patterns and $ \rho $ as before.
\end{lem}

{\bf Proof:} First since $ a_{1,k} + a_{2,k} \leq 1 $ then 
{ \begin{equation}
\begin{split}
 & 1  \leq (a_{1}+a_{2,k})^{-\frac{B}{B+1}} \implies
  1  \leq (a_{1}+a_{2,1})^{-\frac{B}{B+1}} \\
 \implies & (a_{1}+a_{2,k}) \leq (a_{1}+a_{2,1}) 
\end{split}
\end{equation}}
which implies that
{ \begin{equation*}
\begin{split}
a_{1} + & a_{2,k}  \rho ^ {-B}  = a_{1} + a_{2,k}(a_{1}+a_{2,1})^{-\frac{B}{B+1}} \\
& \leq a_{1}(a_{1}+a_{2,1})^{-\frac{B}{B+1}}+ a_{2,k}(a_{1}+a_{2,1})^{-\frac{B}{B+1}} \\
& = (a_{1}+a_{2,k})(a_{1}+a_{2,1})^{-\frac{B}{B+1}} \\
& \leq (a_{1}+a_{2,k})^{\frac{1}{B+1}} = \rho
\end{split}
\end{equation*}}
That is 
\vspace{-0.8cm}
{ \begin{equation}
\begin{split}
a_{1} + & a_{2,k}  \rho ^ {-B}  \leq \rho
\end{split}
\end{equation}}
Now, by induction we show that \eqref{rec1} for all $ k \geq k_{0} $.
Assume \eqref{base1} is true for $ k = k_{0} $ and that the induction hypothesis holds for all $ k \geq k_{0} $ up to $ \bar{k} $ where $ k_{0}= k-B \leq k \leq \bar{k} $. Then  we have for any arbitrary behavior for $ k $ where $ k_{0}= k-B \leq k \leq \bar{k} $ that we can write the sequences $ v_{k} $ in a decreasing sequence. Without a loss of generality assume we will have for $ 0 \leq l \leq B $
{ \begin{equation}
\begin{split}
v_{\bar{k}} & \leq \rho ^ {\bar{k}-l} V_{0}^{'} + \eta \\
v_{\bar{k}-B} & \leq \rho ^{\Phi(\bar{k}-B)} V_{0}^{'} +\eta
\end{split}    
\end{equation}}
Then from \eqref{eqw1} we have
{ \begin{equation}\label{induction1}
\begin{split}
 v_{\bar{k}+1} & \leq a_{1}v_{\bar{k}}+ a_{2,k}\max_{\bar{k}-B \leq \hat{k} \leq \bar{k}} v_{\hat{k}}+ a_{3} \\
     & \leq a_{1}\rho ^ {\bar{k}-l} V_{0}^{'} + a_{2,\bar{k}}\rho ^{\bar{k}-B} V_{0}^{'} +  a_{1} \eta + a_{2,\bar{k}} \eta + a_{3} \\
     & \leq  \ a_{1}\rho ^ {\bar{k} -l} V_{0}^{'} + a_{2,\bar{k}}\rho ^{\bar{k} - l - B } V_{0}^{'} +  a_{1} \eta + a_{2,\bar{k}} \eta + a_{3} \\
      & = (a_{1} + a_{2,\bar{k}} \rho ^ {-B}) \rho ^ {\bar{k}-l } V_{0}^{'} 
      + \eta \\
      & \leq \rho^{\bar{k} - l + 1} V_{0}^{'} + \eta \ \ \ a.s.
\end{split}    
\end{equation}}

But without a loss of generality, we can find $ V_{0} > 0 $ such that $ \rho^{\bar{k} - l + 1} V_{0}^{'} \leq \rho^{\bar{k}+1} V_{0}$ to keep indexing tractable.
And thus $ \eqref{induction1} $ is true for all $ k \geq \bar{k} + 1 $. i.e., notice that for $ k + 1 = \bar{k} + 2 $, we already have for $ k  = \bar{k} + 1 $ that the power of $ \rho $ in the recursive inequality after the coefficient $ a_{1} $ is $ \bar{k} + 1 $. Thus, no matter what the arbitrary behavior for the prior $ B $ terms is, we will have 

\vspace{-0.8cm}
{ \begin{equation}
\begin{split}
    v_{\bar{k}+2} & \leq \rho^{\bar{k} + 2} V_{0}^{'} + \eta \ \ \ a.s.
\end{split}    
\end{equation}}
Thus, \eqref{induction1} follows for all $ k \geq \bar{k} $. \hspace{3cm} $ \blacksquare $
\begin{remark}
i.e., notice that it is true for $ k = \bar{k} $ since
{ \begin{equation}
\begin{split}
    v_{\bar{k}+1}  \leq  \rho^{\bar{k} - l} V_{0}^{'} +\eta \ \text{and} \
    v_{\bar{k}+1}  \leq  \rho^{\bar{k}} V_{0} + \eta
\end{split}    
\end{equation}}
\end{remark}}
%%%%%%%%%%

\subsection{Evaluation of $ \sum_{i=1}^{n} \| R_{i}(k-1) \| ^{2} $}\label{app:evaluateR}

Then by Cauchy-Schwartz inequality, we have \\
$ \| R_{i}(k-1) \| ^{2} \leq \alpha_{k-1} ^{2} \|{\bf A}^{(i)} \| ^{2} _{\infty} \|{\bf B}^{(i)}\| _{2,\infty} ^{2}  \| \nabla{f}^{(i)}_{l}({\bf v}_{M}(k-1))-\nabla{f}^{(i)}_{l}({\bf v}_{m}(k-1)) \| ^{2} $ where   $ \|{\bf A}^{(i)} \| ^{2}_{2,\infty} $ is the norm of the row of ${\bf A}^{(i)}$ with maximum $ l_2 $ norm 
%(i.e. ${\bf A}^{(i)}_{fit}$ can vary over all possibilities of connection)
which is bounded by the  Euclidean norm of the vector formed by the support of a row of ${\bf A}^{(i)}$ of length equals $n_{i}-s_{i}$ of maximum Euclidean norm, since not all coefficients are nonzero.
And $ {\bf v}_{M}(k-1) $ and $ {\bf v}_{m}(k-1) $ are the instants at iteration $ k' $  where $ k-1-H \leq k' \leq k - 1 $ with 
$  \| \nabla{f}^{(i)}_{l}({\bf v}_{M}(k-1))-\nabla{f}^{(i)}_{l}({\bf v}_{m}(k-1)) \| = \max_{i,j} \| \nabla{f}^{(i)}_{l}({\bf v}_{j}(k'))-\nabla{f}^{(i)}_{l}({\bf v}_{i}(k')) \| $. Then

\begin{comment}
{\small \begin{equation*}\label{3.56}
\begin{split}
 & \| \nabla{f}^{(i)}_{l}({\bf v}_{M}(k-1))-\nabla{f}^{(i)}_{l}({\bf v}_{m}(k-1)) \| \\
\leq & \| \nabla{f}^{(i)}_{l}({\bf v}_{M}(k-1))-\nabla{f}^{(i)}_{l}(\Pi_{\mathcal{X}}({\bf v}_{M}(k-1)) \| \\
& + \| \nabla{f}^{(i)}_{l}({\bf v}_{m}(k-1))-\nabla{f}^{(i)}_{l}(\Pi_{\mathcal{X}}({\bf v}_{m}(k-1)) \| \\
& +  \| \nabla{f}^{(i)}_{l}(\Pi_{\mathcal{X}}({\bf v}_{M}(k-1))-\nabla{f}^{(i)}_{l}(\Pi_{\mathcal{X}}({\bf v}_{m}(k-1)) \|
\end{split}
\end{equation*}}
\end{comment}

\begin{comment}
{ \begin{equation*}\label{3.57}
\begin{split}
\ \ &   \| \nabla{f}^{(i)}_{l}({\bf v}_{M}(k-1))-\nabla{f}^{(i)}_{l}({\bf v}_{m}(k-1)) \| \\
\leq & \| \nabla{f}^{(i)}_{l}({\bf v}_{M}(k-1))-\nabla{f}^{(i)}_{l}(\Pi_{\mathcal{X}}({\bf v}_{M}(k-1)) \| \\
& + \| \nabla{f}^{(i)}_{l}({\bf v}_{m}(k-1))-\nabla{f}^{(i)}_{l}(\Pi_{\mathcal{X}}({\bf v}_{m}(k-1)) \| \\
& +  \| \nabla{f}^{(i)}_{l}(\Pi_{\mathcal{X}}({\bf v}_{M}(k-1)) \| + \| \nabla{f}^{(i)}_{l}(\Pi_{\mathcal{X}}({\bf v}_{m}(k-1)) \|.
\end{split}
\end{equation*}}
\end{comment}

{ \begin{equation*}\label{3.57}
\begin{split}
\ \ &   \| \nabla{f}^{(i)}_{l}({\bf v}_{M}(k-1))-\nabla{f}^{(i)}_{l}({\bf v}_{m}(k-1)) \| \\
\leq & \| \nabla{f}^{(i)}_{l}({\bf v}_{M}(k-1))-\nabla{f}^{(i)}_{l}(x^{*}) \| \\
& + \| \nabla{f}^{(i)}_{l}({\bf v}_{m}(k-1))-\nabla{f}^{(i)}_{l}(x^{*}) \| \\
& +  \| \nabla{f}^{(i)}_{l}(x^{*}) \| + \| \nabla{f}^{(i)}_{l}(x^{*}) \|.
\end{split}
\end{equation*}}

Thus, squaring both sides and using $ 2ab \leq a^{2} + b^{2} $ with the Lipschitz assumption on the gradients along with the nonexpansiveness property and the boundedness of gradients in the set $ \mathcal{X} $ we have

\begin{comment}
Thus,
\vspace{-0.3cm}
{\small \begin{equation*}\label{3.58}
\begin{split}
\ \ \ \ \ \ \ \ \ \ & \| \nabla{f}^{(i)}_{l}({\bf v}_{M}(k-1))-\nabla{f}^{(i)}_{l}({\bf v}_{m}(k-1)) \|^{2} \\
\leq & 4 \| \nabla{f}^{(i)}_{l}({\bf v}_{M}(k-1))-\nabla{f}^{(i)}_{l}(\Pi_{\mathcal{X}}({\bf v}_{M}(k-1)) \| ^{2} \\
& + 4 \| \nabla{f}^{(i)}_{l}({\bf v}_{m}(k-1))-\nabla{f}^{(i)}_{l}(\Pi_{\mathcal{X}}({\bf v}_{m}(k-1)) \| ^{2} \\
& + 4 \| \nabla{f}^{(i)}_{l}(\Pi_{\mathcal{X}}({\bf v}_{M}(k-1)) \|^{2} + 4 \| \nabla{f}^{(i)}_{l}(\Pi_{\mathcal{X}}({\bf v}_{m}(k-1)) \| ^{2}.
\end{split}
\end{equation*}}

Therefore,
{\small \begin{equation*}\label{3.59}
\begin{split}
\ \ \ \ \ \ \ \ \ & \| \nabla{f}^{(i)}_{l}({\bf v}_{M}(k-1))-\nabla{f}^{(i)}_{l}({\bf v}_{m}(k-1)) \|^{2} \\
\leq & 4 L^{2}\| {\bf v}_{M}(k-1)- \Pi_{\mathcal{X}}({\bf v}_{M}(k-1)) \| ^{2} \\
& + 4 L^{2} \| {\bf v}_{m}(k-1)- \Pi_{\mathcal{X}}({\bf v}_{m}(k-1)) \| ^{2} + 4 D_{i}^{2} + 4 D_{i} ^{2}.
\end{split}
\end{equation*}}

That is,
\end{comment}

\begin{comment}
\vspace{-0.3cm}
{ \begin{equation*}\label{3.60}
\begin{split}
\ \ \ \ \ & \| \nabla{f}^{(i)}_{l}({\bf v}_{M}(k-1))-\nabla{f}^{(i)}_{l}({\bf v}_{m}(k-1)) \|^{2} \\
\leq \ & 4 L^{2}dist^{2}({\bf v}_{M}(k-1),{\mathcal{X}}) + 4 L^{2}dist^{2}({\bf v}_{m}(k-1),{\mathcal{X}}) + 8 D_{i}^{2}. 
\end{split}
\end{equation*}}
\end{comment}

{ \begin{equation*}\label{3.61}
\begin{split}
  \| \nabla{f}^{(i)}_{l}({\bf v}_{M}(k-1))- & \nabla{f}^{(i)}_{l}({\bf v}_{m}(k-1)) \|^{2} \\
 \leq \ & 2 L^{2}\| {\bf v}_{M}(k-1)- {\bf x}^{*} \| ^{2} + 2 L^{2}\| {\bf v}_{m}(k-1)- {\bf x}^{*} \| ^{2}
\end{split}
\end{equation*}}
i.e., we used $ \nabla{f(x^ {*})} = 0 $.
Therefore,

\begin{comment}
{ \begin{equation}\label{3.62}
\begin{split}
 & \|  R_{i}  (k-1)  \| ^{2} \leq  \\
 & \alpha_{k-1} ^{2} \| {\bf A}^{(i)} \| ^{2} _{\infty} \| {\bf B}^{(i)}\| _{2,\infty} ^{2}  \| \nabla{f}^{(i)}_{l}({\bf v}_{M}(k-1))-\nabla{f}^{(i)}_{l}({\bf v}_{m}(k-1)) \| ^{2} \\
 \leq & \alpha_{k-1} ^{2} \|{\bf A}^{(i)} \| ^{2} _{\infty} \| {\bf B}^{(i)}\| _{2,\infty} ^{2}( 4 L^{2}dist^{2}({\bf v}_{M}(k-1),{\mathcal{X}}) \\
 & + 4 L^{2}dist^{2}({\bf v}_{m}(k-1),{\mathcal{X}}) + 8 D_{i}^{2}),
\end{split}
\end{equation}}
and 
\end{comment}

{ \begin{equation}\label{3.63}
\begin{split}
 \| & R_{i}  (k-1) \| ^{2} \\
  & \leq  \alpha_{k-1} ^{2} \|{\bf A}^{(i)} \| ^{2} _{\infty} \|{\bf B}^{(i)}\| _{2,\infty} ^{2}  \| \nabla{f}^{(i)}_{l}({\bf v}_{M}(k-1))-\nabla{f}^{(i)}_{l}({\bf v}_{m}(k-1)) \| ^{2} \\
 & \leq \alpha_{k-1} ^{2} \|{\bf A}^{(i)} \| ^{2} _{\infty} \| {\bf B}^{(i)}\| _{2,\infty} ^{2}( 2 L^{2}\| {\bf v}_{M}(k-1)- {\bf x}^{*} \| ^{2}\\
 & \ \ \ \ \  \ \ \ \ \ + 2 L^{2}\| {\bf v}_{m}(k-1)- {\bf x}^{*} \| ^{2} ). 
\end{split}
\end{equation}}

\begin{comment}
But 
\vspace{-0.8cm}
{ \begin{equation}\label{3.64}
\begin{split}
 \ \ \ \ \ \ \ \  dist^{2}({\bf v}_{m}(k-1),{\mathcal{X}}) & \leq \sum_{k'=k-H}^{k}\sum_{l=1}^{n} dist^{2}({\bf v}_{l}(k-1),{\mathcal{X}})\\
 %& = n \sum_{k'=k-H}^{k}  dist^{2}({\bf v}_{l}(k'-1),{\mathcal{X}}) \\
%& \leq n \sum_{k'=k-H}^{k} \sum_{l=1}^{n} dist^{2}({\bf v}_{l}(k'-1),{\mathcal{X}}). 
\end{split}
\end{equation}}

Similarly,
\vspace{-0.9cm}
{ \begin{equation}\label{3.65}
\begin{split}
\ \ \ \ \ \ \ \ \ \ \ \ \ \ \ \ \ \ \  \ \ \ dist^{2}({\bf v}_{M}(k-1),{\mathcal{X}}) & \leq \sum_{k'=k-H}^{k}\sum_{l=1}^{n} dist^{2}({\bf v}_{l}(k-1),{\mathcal{X}})\\
%& = n  \sum_{k'=k-H}^{k} dist^{2}({\bf v}_{M}(k-1),{\mathcal{X}}) \\
%& \leq n \sum_{k'=k-H}^{k} \sum_{l=1}^{n} dist^{2}({\bf v}_{l}(k'-1),{\mathcal{X}}). 
\end{split}
\end{equation}}
\end{comment}

But we have

{ \begin{equation*}\label{3.64}
\begin{split}
 \ \ \ \ \ \ \ \  \| {\bf v}_{m}(k-1) - x^{*} \|^{2}  \leq \max_{k-1-H \leq \hat{k} \leq k-1} \| {\bf v}_{q}(\hat{k}) -  x^{*}\|^{2} 
\end{split}
\end{equation*}}
where $ q \in \{1,\ldots,q\} $.

{ \begin{equation*}\label{3.64}
\begin{split}
 \ \ \ \ \ \ \ \  \| {\bf v}_{M}(k-1) - x^{*} \|^{2}  \leq \max_{k-1-H \leq \hat{k} \leq k-1} \| {\bf v}_{q}(\hat{k}) -  x^{*}\|^{2} 
\end{split}
\end{equation*}}

where $ q \in \{1,\ldots,q\} $.

\begin{comment}
{ \begin{equation*}\label{3.64}
\begin{split}
 \ \ \ \ \ \ \ \  \| {\bf v}_{m}(k-1) - x^{*} \|^{2} & \leq \sum_{k'=k-H}^{k}\sum_{l=1}^{n} \| {\bf v}_{l}(k^{'}-1) -  x^{*}\|^{2} 
\end{split}
\end{equation*}}

\vspace{1cm}
Similarly,

{ \begin{equation*}\label{3.65}
\begin{split}
\ \ \ \ \ \ \ \ \ \ \ \ \ \ \ \ \ \ \  \ \ \ dist^{2}({\bf v}_{M}(k-1),{\mathcal{X}}) & \leq \sum_{k'=k-H}^{k}\sum_{l=1}^{n} \| {\bf v}_{l}(k^{'}-1) -  x^{*}\|^{2} 
\end{split}
\end{equation*}}
\end{comment}

\begin{comment}
Then \eqref{3.62} and \eqref{3.63} become
\vspace{-0.6cm}
{ \begin{equation}\label{3.66a}
\begin{split}
& \ \ \ \hspace{3cm} \ \ \ \ \ \ \ \ \ \ \ \ \ \ \ \ \ \ \ \ \ \  \|  R_{i} (k-1) \| ^{2}  \leq \\
 & \alpha_{k-1} ^{2} \|{\bf A}^{(i)} \| ^{2} _{\infty} \|{\bf B}^{(i)}\| _{2,\infty} ^{2}( 8 L^{2}  \sum_{k'=k-H}^{k} \sum_{l=1}^{n} dist^{2}({\bf v}_{l}(k'- 1),{\mathcal{X}})  + 8 D_{i}^{2}) 
\end{split}
\end{equation}}

Similarly, using analog inequalities as \eqref{3.64} and
\eqref{3.65} we have
\end{comment}

Then
{ \begin{equation}\label{3.67a}
\begin{split}
 & \| R_{i} (k-1) \|   \leq \\
 & \alpha_{k-1}  \| {\bf A}^{(i)} \| _{\infty} \|{\bf B}^{(i)}\| _{2,\infty} ( 2 L \max_{k-1-H \leq \hat{k} \leq k-1} \| {\bf v}_{q}(\hat{k}) -  x^{*}\| ) 
\end{split}
\end{equation}}

where $ q \in \{1,\ldots,n\} $.

and
{ \begin{equation}\label{3.67a}
\begin{split}
 & \| R_{i} (k-1) \| ^{2}  \leq \\
 & \alpha_{k-1} ^{2} \| {\bf A}^{(i)} \| ^{2} _{\infty} \|{\bf B}^{(i)}\| _{2,\infty} ^{2}( 4 L^{2}  \max_{k-1-H \leq \hat{k} \leq k-1, q \in V} \| {\bf v}_{q}(\hat{k}) -  x^{*}\|^{2} ) 
\end{split}
\end{equation}}

and consequently

\begin{comment}
{ \begin{equation}\label{3.66}
\begin{split}
& \hspace{3cm} \sum_{i=1}^{n}  \| R_{i}  (k-1) \| ^{2}  \leq \\
 & \alpha_{k-1} ^{2} \|{\bf A}^{(i)} \| ^{2} _{\infty} \|{\bf B}^{(i)}\| _{2,\infty} ^{2}( 8 n L^{2} \sum_{k'=k-H}^{k} \sum_{l=1}^{n} dist^{2}({\bf v}_{l}(k'- 1),{\mathcal{X}}) + 8\sum_{i=1}^{n} D_{i}^{2}), 
\end{split}
\end{equation}}
\end{comment}

{ \begin{equation}\label{3.67}
\begin{split}
& \sum_{i=1}^{n}  \| R_{i}  (k-1) \| ^{2}  \leq \\ 
& \alpha_{k-1} ^{2} \|{\bf A}^{(i)} \| ^{2} _{\infty} \|{\bf B}^{(i)}\| _{2,\infty} ^{2}( 4 L^{2} \sum_{i=1}^{n} \max_{k-1-H \leq \hat{k} \leq k-1 ; q \in V} \| {\bf v}_{q}(\hat{k}) -  x^{*}\|^{2} 
\end{split}
\end{equation}}

\begin{comment}
where
\vspace{-0.5cm}
{ \begin{equation}
\begin{split}
 F =  4  L^{2} H D \|{\bf A}^{(i)} \| ^{2} _{\infty} \|{\bf B}^{(i)}\| _{2,\infty} ^{2} < \infty     
\end{split}
\end{equation}}
\end{comment}

\subsection{Bounding $ \sum_{i=1}^{n} \| R_{i}(k-1) \| ^{2} $}\label{appB}

\begin{comment}
But using Appendix A result, we have
{ \begin{equation*}\label{3.64}
\begin{split}
 \ \ \ \ \ \ \ \  \| {\bf v}_{m}(k-1) - x^{*} \|^{2} & \leq  D
\end{split}
\end{equation*}}

\vspace{1cm}
Similarly,
\vspace{-0.9cm}
{ \begin{equation*}\label{3.65}
\begin{split}
\ \ \ \ \ \ \ \ \ \ \ \ \ \ \ \ \ \ \  \ \ \ dist^{2}({\bf v}_{M}(k-1),{\mathcal{X}}) \leq  D
\end{split}
\end{equation*}}
\end{comment}

From Proposition ~\ref{sum_v_i-x^*_type2} or ~\ref{sum_v_i-x^*_type2a} we have

{ \begin{equation*}\label{3.65}
\begin{split}
\sum_{i=1}^{n}  \| {\bf v}_{i}(\hat{k}) -  x^{*}\|^{2} \leq D
\end{split}
\end{equation*}}

Then

\begin{comment}
{ \begin{equation}\label{3.67a}
\begin{split}
 & \| R_{i} (k-1) \| ^{2}  \leq \\
 & \alpha_{k-1} ^{2} \| {\bf A}^{(i)} \| ^{2} _{\infty} \|{\bf B}^{(i)}\| _{2,\infty} ^{2}( 4 L^{2}  D) \\
 &  \ \ \ \ \ \ \ \ \ \  \  \ \ \ \ \  \ \ \ \leq  \alpha_{k-1} ^{2} F
\end{split}
\end{equation}}
and consequently
\end{comment}

\vspace{-0.3cm}
{ \begin{equation}\label{3.67}
\begin{split}
\sum_{i=1}^{n}  \| R_{i}  (k-1) \| ^{2}  \leq & \alpha_{k-1} ^{2} \|{\bf A}^{(i)} \| ^{2} _{\infty} \|{\bf B}^{(i)}\| _{2,\infty} ^{2}( 4  L^{2} D) \\
&  leq  \alpha_{k-1} ^{2}  F
\end{split}
\end{equation}}

where
\vspace{-0.5cm}
{ \begin{equation}
\begin{split}
 F =  4 L^{2} D \|{\bf A}^{(i)} \| ^{2} _{\infty} \|{\bf B}^{(i)}\| _{2,\infty} ^{2} < \infty     
\end{split}
\end{equation}}

\subsection{Lemmas}

\begin{lem}\label{L3.6}
[A variant of Lemma 2 in \cite{nedic2011random}] Let $\mathcal{Y} \subset \mathbb{R}^{N}$ be a closed convex set. Let the function $ \Phi : \mathbb{R}^{N} \rightarrow \mathbb{R}$ be convex and differentiable over $\mathbb{R}^{N}$ with Lipschitz continuous gradients with a constant $L$.\\
Let $ {\bf y} $ be given by 
${\bf y}={\bf x}-\alpha \nabla{ \Phi}({\bf x})$ for some ${\bf x} \in \mathbb{R}^{N}$, $\alpha > 0$.\\
Then, we have for any $\hat{{\bf x}} \in \mathcal{Y}$ and ${\bf z} \in \mathbb{R}^{N}$,
{ \begin{equation}\label{3.24}
\begin{split}
\|{\bf y}-\hat{{\bf x}}\| ^{2} \leq & \ (1+A_{\eta}\alpha^{2}) \|{\bf x}-\hat{{\bf x}}\| ^{2} - 2 \alpha ( \Phi({\bf z})- \Phi(\hat{{\bf x}})) \\
 & +(\frac{3}{8 \eta}+2 \alpha L ) \|{\bf x}- {\bf z} \|^{2}+B_{\eta}\alpha^{2} \| \nabla{ \Phi(\hat{{\bf x}})}\| ^{2},
\end{split}
\end{equation}}
where $A_{\eta}=2L^{2}+16 \eta L^{2}$, $B_{\eta}=2\eta + 8 $ and $ \eta > 0 $ is arbitrary.
\end{lem}

A variant of the above lemma (Lemma 2 in \citep{nedic2011random} or Lemma 4 in \cite{lee2013distributed}). Next we invoke Lemma 7(a) in \cite{lee2015asynchronous}. which is given by
\begin{lem}\label{L3.7}
Let $\mathcal{Y} \subset \mathbb{R}^{N}$ be a closed convex set. Let the function $ \Phi : \mathbb{R}^{N} \rightarrow \mathbb{R}$ be convex and differentiable over $\mathbb{R}^{N}$ with Lipschitz continuous gradients with a constant $L$.\\
Let $y$ be given by,\\
$y=\Pi_{\mathcal{Y}}({\bf x}-\alpha \nabla{ \Phi}({\bf x}))$ for some $x \in \mathbb{R}^{N}$, $\alpha > 0$.\\
Then, we have for any $\hat{{\bf x}} \in \mathcal{Y}$ and $z \in \mathbb{R}^{N}$,
{ \begin{equation*}
\begin{split}
\|{\bf y} & -\hat{{\bf x}}\| ^{2} \leq  \ (1+A_{\tau}\alpha^{2}) \| {\bf x} -\hat{{\bf x}}\| ^{2} - 2 \alpha ( \Phi({\bf z})- \Phi(\hat{{\bf x}}) ) \\ 
& -\frac{3}{4} \|{\bf y} - {\bf x} \| ^{2}  +(\frac{3}{8 \tau}+2 \alpha L ) \|{\bf x}- {\bf z} \|^{2}+B_{\tau}\alpha^{2} \| \nabla{ \Phi(\hat{{\bf x}})}\| ^{2},
\end{split}
\end{equation*}} 
where $A_{\tau}=8L^{2}+16 \tau L^{2}$, $B_{\tau}=8 \tau + 8 $ and $ \tau > 0 $ is arbitrary.
The proof is similar to Lemma 2 with the imposing of the non-expansive property to obtain the extra term.
\end{lem}

We begin with the following result, Lemma 6 in \cite{lee2013distributed}.

\begin{lem}\label{L3.10}
Let Assumptions~\ref{A3.1} and \ref{A3.3} hold. Consider the iterates generated by
\begin{equation}\label{3.77}
\begin{split}
\theta_{i}(k+1) = \sum_{j=1}^{n}[{\bf W}(k)]_{ij}\theta_{j}(k)+ e_{i}(k) \ for \ all \ i \in V. 
\end{split}
\end{equation}
Suppose there exists a non-negative non-increasing sequence $ \{ \alpha_{k} \} $ such that $ \sum_{k=0}^{\infty} \alpha_{k}|| e_{i}(k) || < \infty $ for all $ i \in V $, then for all $ i,j \in V $ we have 
\begin{equation}\label{3.78}
\begin{split}
\sum_{k=0}^{\infty} \alpha_{k}|| \theta_{i}(k) - \theta_{j}(k) || < \infty.
\end{split}
\end{equation}
\end{lem}

\begin{comment}
{ \begin{equation}
\resizebox{\textwidth}{!}{\begin{split}
\eta = \frac{ ( 2 n \alpha_{k} L \min(|I|\gamma_{max},1) + 2\alpha_{k} p \gamma_{min} n  \frac{\sigma_{max}}{2} + \frac{2 \alpha_{k}^{2}}{(\alpha_{k} - \frac{\alpha_{k}^{2}L}{2})(\sum_{(i)=1}^{p}\gamma_{(i)})^{2}} ) 2 L n \min (p \gamma_{max}, 1)) \max_{(i)} \| {\bf x}^{(i)} - {\bf x}^{*} \|^{2} }{  ( 2\alpha_{k} p \gamma_{min} n \frac{\sigma_{min}}{2} - \frac{2 \alpha_{k}^{2}}{(\alpha_{k} - \frac{\alpha_{k}^{2}L}{2})(\sum_{(i)=1}^{p}\gamma_{(i)})^{2}})2 L n \min (p \gamma_{max}, 1) - \frac{4 L \alpha_{k} \| {\bf A}^{(i)} \| _{\infty} \|{\bf B}^{(i)}\| _{2,\infty} \max_{k-H \leq \hat{k} \leq k ; q \in V} ( 1 + 2 L \alpha_{k}  \| {\bf A}^{(i)} \|  _{\infty} \|{\bf B}^{(i)}\|  _{2,\infty} ) }{(\sum_{(i)=1}^{p}\gamma_{(i)})^{2}} }   
\end{split}}
\end{equation}}
\end{comment}

\end{appendix}

\bibliographystyle{unsrt}
\bibliography{ref}
\end{document}